\DeclareFontFamily{OT1}{pzc}{}
\DeclareFontShape{OT1}{pzc}{m}{it}{<-> s * [1.10] pzcmi7t}{}
\DeclareMathAlphabet{\mathpzc}{OT1}{pzc}{m}{it}
\newtheorem{theorem}{Theorem}[section]
\newtheorem{corollary}[theorem]{Corollary}
\newtheorem{definition}[theorem]{Definition}
\newtheorem{lemma}[theorem]{Lemma}
\newtheorem{proposition}[theorem]{Proposition}
\newtheorem{remark}[theorem]{Remark}
\numberwithin{equation}{section}
\def\XXint#1#2#3{{\setbox0=\hbox{$#1{#2#3}{\int}$}
\vcenter{\hbox{$#2#3$}}\kern-.5\wd0}}
\newcommand{\R}{\mathbb{R}}
\newcommand{\N}{\mathbb{N}}
\newcommand{\Z}{\mathbb{Z}}
\newcommand{\G}{\mathscr G}
\newcommand{\spt}{\operatorname{spt}}
\newcommand{\Lip}{\operatorname{Lip}}
\newcommand{\LIP}{\operatorname{LIP}}
\newcommand{\Parcon}{\Gamma_{\operatorname{pc,c}}^k}
\newcommand{\hParcon}{\overline\Gamma_{\operatorname{pc,c}}^k}
\newcommand{\ud}{\mathrm {d}}
\newcommand{\id}{\mathrm {id}}
\newcommand{\inv}{^{-1}}
\newcommand{\Rep}{\mathrm{Rep}}
\newcommand{\Poly}{{\sf Poly}}
\newcommand{\Polys}{\mathpzc{Poly}}
\newcommand{\sign}{\mathrm{sign}}
\newcommand{\Alt}{\mathrm{Alt}}
\newcommand{\loc}{\mathrm{loc}}
\newcommand{\sD}{\mathscr D}
\newcommand{\sB}{\mathscr B}
\newcommand{\cD}{\mathcal D}
\newcommand{\cU}{\mathcal U}
\title[Metric currents and Polylipschitz forms]{Metric currents and Polylipschitz forms}
\thanks{P.P. was supported in part by the Academy of Finland project \#297258. }
\keywords{Metric currents, differential forms on metric spaces, polylipschitz sheaves}
\subjclass[2010]{49Q15, 53C23, 30L99}
\author{Pekka Pankka}
\address{P.O. Box 68 (Pietari Kalmin katu 5), FI-00014 University of Helsinki, Finland}
\email{pekka.pankka@helsinki.fi}
\author{Elefterios Soultanis}
\address{SISSA, Via Bonomea 265, 34136, Trieste, Italy, and 
University of Fribourg, Chemin du Musee 23, CH-1700, Fribourg, Switzerland}
\email{elefterios.soultanis@gmail.com}
\date{\today}
\begin{document}

\begin{abstract}
We construct, for a locally compact metric space $X$, a space of polylipschitz forms $\overline\Gamma^*_c(X)$, which is a pre-dual for the space of metric currents of $\sD_*(X)$ Ambrosio and Kirchheim. These polylipschitz forms may be seen as an analog of differential forms in the metric setting.
\end{abstract}

\maketitle


\section{Introduction}

In \cite{amb00}, Ambrosio and Kirchheim extended the Federer--Fleming theory of currents to general metric spaces by substituting the differential structure on the domain for carefully chosen conditions on the functionals: a metric $k$-current $T\in \sD_k(X)$ on a metric space $X$ is a $(k+1)$-linear map $T:\sD^k(X):=\LIP_c(X)\times\LIP_\infty(X)^k\to \R$ satisfying continuity and locality conditions.

In this article we construct a pre-dual for the space of metric currents. Our strategy is to pass from $(k+1)$-tuples of Lipschitz functions to linearized and localized objects we call \emph{polylipschitz forms}. Linearization of multilinear functionals naturally involves tensor products, and we use sheaf theoretic methods to carry out the localization. 

Williams \cite{wil12} and Schioppa \cite{sch16} have given different constructions for pre-duals of metric currents. Their constructions are based on representation of currents of finite mass by duality using Cheeger differentiation and Alberti representations, respectively. Our motivation to consider polylipschitz forms stems from an application of metric currents to geometric mapping theory -- polylipschitz forms induce a natural local pull-back for metric currents of finite mass under BLD-mappings. We discuss this application briefly in the end of the introduction and in more detail in \cite{teripekka2}.

\subsection*{Polylipschitz forms and sections}
Polylipschitz forms are introduced in three steps: first polylipschitz functions and polylipschitz sections, then homogeneous polylipschitz functions, and finally polylipschitz forms. Before stating our results, we discuss the motivation for this hierarchy of spaces. 

The space $\Poly^k(X)$ of \emph{$k$-polylipschitz functions} on $X$ is the projective tensor product of $(k+1)$ copies of $\LIP_\infty(X)$. The collection $\{ \Poly^k(U)\}_U$, where $U$ ranges over open sets in $X$, forms a presheaf and gives rise to the \'etal\'e space of germs of polylipschitz functions. We denote $\Gamma^k(X)$ the space of continuous sections over this \'etal\'e space and call its elements polylipschitz sections. 

Although metric $k$-currents on $X$ act naturally on compactly supported polylipschitz sections $\Gamma^k_c(X)$ (see Theorem \ref{thm:polysec}), these sections are not the natural counterpart for differential forms on $X$, since the dual $\Gamma_c^k(X)^*$ contains functionals, which do not satisfy the locality condition for metric currents. Recall that in Euclidean spaces the $(k+1)$-tuple $(\pi_0,\ldots,\pi_k)\in\sD^k(X)$ corresponds to the measurable differential form $\pi_0 d\pi_1\wedge\dots \wedge d\pi_k$. The locality condition of Ambrosio and Kirchheim for metric currents states that $T(\pi_0,\ldots,\pi_k)=0$ if $\pi_l$ is constant on $\spt\pi_0$ for some $l>0$, while the polylipschitz section corresponding to $(\pi_0,\ldots,\pi_k)$ need not be zero, cf.~\eqref{eq:iota}.

For this reason, we introduce \emph{homogeneous polylipschitz functions}. These are elements of the projective tensor product $\overline\Poly^k(X)$ of $\LIP_\infty(X)$ and $k$ copies of $\overline\LIP_\infty(X)$, the space of bounded Lipschitz functions modulo constants. A \emph{polylipschitz form} is a continuous section over the \'etal\'e space $\overline\Polys^k(X)$ associated to the presheaf $\{\overline\Poly^k(U)\}_U$ and we denote the space of polylipschitz forms by $\overline\Gamma^k(X)$. The locality property of functionals in $\overline\Gamma^k(X)^*$, which motivated the homogeneous spaces, is discussed in Lemma 7.5.

We consider polylipschitz forms as differential forms in the metric setting, although we do not impose antisymmetry on polylipschitz forms. Note that, by an observation of Ambrosio and Kirchheim, the other properties imply the corresponding antisymmetry property for metric currents. In Section \ref{sec:Alt} we discuss how antisymmetry of polylipschitz forms can imposed a posteriori.

The space $\overline\Gamma_c^k(X)$ of compactly supported polylipschitz forms may be equipped with a notion of sequential convergence. There is a natural, sequentially continuous exterior derivative $\bar d:\overline\Gamma^k_c(X)\to \overline\Gamma^{k+1}_c(X)$, a pointwise norm $\|\cdot\|_x$, for $x\in X$, corresponding to the comass of a differential form, and a natural, sequentially continuous map
\[
\bar\iota:\sD^k(X)\to \overline\Gamma^k_c(X),
\]
cf.~\eqref{eq:iota}.

Our first main result states that the space of metric $k$-currents $\sD_k(X)$ on $X$ embeds bijectively into the sequentially continuous dual $\overline\Gamma_c^k(X)^\ast$ of $\overline\Gamma_c^k(X)$.

\begin{theorem}
	\label{thm:pre_dual}
	Let $X$ be a locally compact metric space and $k\in\N$. For each $T\in \sD_k(X)$ there exists a unique $\widehat T\in \overline\Gamma_c^k(X)^*$ for which the diagram \begin{equation}
	\begin{tikzcd}
	\sD^k(X)\arrow[swap]{d}{\bar\iota} \arrow[r, "T"] & \R\\
	\overline\Gamma_c^k(X)\arrow[swap]{ur}{\widehat T}
	\end{tikzcd}
	\end{equation}
	commutes. The map $T\mapsto \widehat{T}:\sD_k(X)\to \overline\Gamma_c^k(X)^\ast$ is a bijective and sequentially continuous linear map.
	
	Moreover, for each $T\in \sD_k(X)$ and  $\omega\in \overline\Gamma_c^{k-1}(X)$, we have 
\begin{equation}\label{main1:partial}
\widehat{\partial T}(\omega)=\widehat T(\bar d\omega).
\end{equation}
\end{theorem}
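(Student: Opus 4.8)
The plan is to prove the four assertions of Theorem~\ref{thm:pre_dual} in turn: existence and uniqueness of $\widehat T$; linearity and sequential continuity of $T\mapsto\widehat T$; bijectivity; and the boundary identity~\eqref{main1:partial}. The guiding idea is that $\bar\iota(\sD^k(X))$ generates $\overline\Gamma_c^k(X)$ sequentially, so that each assertion is forced on generators and extended by sequential continuity, while in the reverse direction $T$ is reconstructed from $\widehat T$ by precomposition with $\bar\iota$.

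\textbf{Existence and uniqueness.} For uniqueness it suffices to know that $\bar\iota(\sD^k(X))$ is sequentially dense in $\overline\Gamma_c^k(X)$ in a localized sense: given $\omega\in\overline\Gamma_c^k(X)$, cover $\spt\omega$ by finitely many open sets $U_i$ and choose a subordinate Lipschitz partition of unity $\{\phi_i\}$, so that $\omega=\sum_i\phi_i\omega$ with each $\phi_i\omega$ represented by an element of the projective tensor product $\overline\Poly^k(U_i)$; such an element is a limit of finite sums of elementary tensors $[g]\otimes[\pi_1]\otimes\dots\otimes[\pi_k]$, each of which equals $\bar\iota$ of a tuple in $\sD^k(U_i)$. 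Hence $\omega$ is a sequential limit of finite sums of images under $\bar\iota$, and any sequentially continuous $\widehat T$ with $\widehat T\circ\bar\iota=T$ is thereby determined on all of $\overline\Gamma_c^k(X)$. For existence one reverses this construction: the $(k+1)$-linearity of $T$ makes it a linear functional on the algebraic tensor product $\LIP_c(X)\otimes\LIP_\infty(X)^{\otimes k}$; the locality axiom gives $T(\pi_0,\dots,1,\dots,\pi_k)=0$, so this functional descends to $\LIP_c(X)\otimes\overline\LIP_\infty(X)^{\otimes k}$; the continuity axiom of $T$, applied to each restriction $T\llcorner U$ with $U$ relatively compact (using local compactness of $X$), provides the continuity needed to extend it to the completed projective tensor products $\overline\Poly^k(U)$; and the locality of $T$ makes these local functionals compatible under restriction, so they glue through the partition of unity into a single sequentially continuous $\widehat T\in\overline\Gamma_c^k(X)^*$ with $\widehat T\circ\bar\iota=T$.

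\textbf{Linearity, sequential continuity, bijectivity.} Linearity of $T\mapsto\widehat T$ is immediate from uniqueness, since $\widehat{T_1+T_2}$ and $\widehat{T_1}+\widehat{T_2}$ both make the diagram commute for $T_1+T_2$. If $T_j\to T$ in $\sD_k(X)$, then $\widehat{T_j}(\bar\iota(\pi_0,\dots,\pi_k))=T_j(\pi_0,\dots,\pi_k)\to T(\pi_0,\dots,\pi_k)=\widehat T(\bar\iota(\pi_0,\dots,\pi_k))$, and this passes to all of $\overline\Gamma_c^k(X)$ via the density statement above, giving sequential continuity. Injectivity is clear: $\widehat T=0$ forces $T=\widehat T\circ\bar\iota=0$. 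For surjectivity, let $S\in\overline\Gamma_c^k(X)^*$ and set $T:=S\circ\bar\iota$; multilinearity of $\bar\iota$ and linearity of $S$ make $T$ $(k+1)$-linear, sequential continuity of $\bar\iota$ and $S$ yield the continuity axiom for $T$, and the locality axiom follows from the locality property of functionals in $\overline\Gamma_c^k(X)^*$ (Lemma~7.5), since $\bar\iota(\pi_0,\dots,\pi_k)$ is built from the homogeneous classes $[\pi_l]$, which vanish when $\pi_l$ is constant. Thus $T\in\sD_k(X)$, and since $S$ makes the diagram commute for this $T$, uniqueness gives $\widehat T=S$.

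\textbf{The boundary identity.} Both sides of~\eqref{main1:partial} are sequentially continuous in $\omega\in\overline\Gamma_c^{k-1}(X)$ and agree on a sequentially dense set, so it suffices to check it on generators $\omega=\bar\iota(\pi_0,\dots,\pi_{k-1})$. By the definition of the exterior derivative on such generators, $\bar d\,\bar\iota(\pi_0,\dots,\pi_{k-1})=\bar\iota(\tau,\pi_0,\dots,\pi_{k-1})$ for any cutoff $\tau\in\LIP_c(X)$ with $\tau\equiv1$ on $\spt\pi_0$, while the Ambrosio--Kirchheim definition of the boundary reads $\partial T(\pi_0,\dots,\pi_{k-1})=T(\tau,\pi_0,\dots,\pi_{k-1})$; hence, using the commuting diagrams for $T$ and for $\partial T$,
\[
\widehat T(\bar d\omega)=T(\tau,\pi_0,\dots,\pi_{k-1})=\partial T(\pi_0,\dots,\pi_{k-1})=\widehat{\partial T}(\omega).
\]
The main obstacle is the existence half of the first step: verifying that the functional canonically attached to $T$ at the level of localized elementary tensors genuinely extends to a sequentially continuous functional on $\overline\Gamma_c^k(X)$. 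This requires converting the continuity and locality axioms of Ambrosio--Kirchheim into, respectively, the bounds needed to pass to the completed projective tensor products $\overline\Poly^k(U)$ and the compatibility needed to glue the local functionals $\widehat{T\llcorner U}$ across a partition of unity, all while tracking the sheafification into germs and the interaction of $\bar\iota$ with multiplication by Lipschitz cutoffs.
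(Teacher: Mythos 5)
Your overall route is the same as the paper's: localize $T$ by a Lipschitz partition of unity, use the locality axiom to descend from $\Poly^k(U)$ to the homogeneous space $\overline\Poly^k(U)$, extend by the universal property of the projective tensor product, glue the local functionals, and verify \eqref{main1:partial} on generators using that the Alexander--Spanier differential of $\pi_0\otimes\cdots\otimes\pi_{k-1}$ reduces, in the homogeneous quotient, to $1\otimes\pi_0\otimes\cdots\otimes\pi_{k-1}$. The uniqueness-via-density argument, the surjectivity via the locality of $\bar\iota$ (Lemma \ref{rmk:locality}), and the boundary identity are all in line with the paper. There are, however, two places where the argument as written does not go through.

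First, in the existence step you extract from the axioms of $T$ only the norm bound \eqref{eq:bound}, which indeed produces a \emph{bounded} linear functional on each $\overline\Poly^k(U)$. But $\overline\Gamma_c^k(X)^*$ is the \emph{sequentially continuous} dual for the weak-type convergence of Definitions \ref{conv}, \ref{hconv} and \ref{def:polyformconv}, and a norm-bounded functional on $\overline\Poly^k(U)$ is in general not sequentially continuous for this convergence (just as a bounded functional on $\LIP_\infty(X)$ need not be continuous for the convergence of Definition \ref{multicont}). Establishing this sequential continuity of the local functionals $\overline{T_\varphi^U}$ is the analytic heart of the theorem (Lemma \ref{lem:need} in the paper): its proof renormalizes the factors of a representative so that all have comparable Lipschitz norms, extracts limits by Arzel\`a--Ascoli and a diagonal argument, uses the locality of $T$ to show each limiting term vanishes, and concludes by dominated convergence. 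Your sketch neither supplies nor replaces this step, so the glued $\widehat T$ is not shown to lie in $\overline\Gamma_c^k(X)^*$. Second, your proof of the sequential continuity of $T\mapsto\widehat T$ claims that $\widehat T_i\to\widehat T$ on $\bar\iota(\sD^k(X))$ "passes to all of $\overline\Gamma_c^k(X)$ via density". Pointwise convergence of a sequence of functionals on a sequentially dense set does not propagate to limits of that set without equicontinuity; this is exactly where the paper invokes the multilinear uniform boundedness principle to get a bound $|T_i(\varphi_U\pi_0,\ldots,\pi_k)|\le C\,L(\pi_0)\cdots L(\pi_k)$ with $C$ independent of $i$, and then dominated convergence along the series representation of each $\bar\pi_U$. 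As written, this step of yours is a non sequitur, and the same uniformity issue is the missing ingredient in both gaps.
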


The sequential continuity of the map $T\mapsto \widehat T$ is defined as follows: \emph{Suppose that a sequence $(T_i)$ in $\sD_k(X)$ weakly converges to $T\in\sD_k(X)$, that is, for each $(\pi_0,\ldots,\pi_k)\in \sD^k(X)$, we have that $\lim_{i \to \infty} T_i(\pi_0,\ldots,\pi_k) =  T(\pi_0,\ldots,\pi_k)$. Then, for each $\omega \in \overline\Gamma_c^k(X)$, we have that $\lim_{i \to \infty} \widehat T_i(\omega) = \widehat T(\omega)$.}

\begin{remark}
A version of Theorem \ref{thm:pre_dual} for polylipschitz sections, shows that there is a natural sequentially continuous embedding $\sD_k(X)\to \Gamma_c^k(X)^*$; see Theorem \ref{thm:polysec}. As already discussed, this embedding is, however, not a surjection. 
\end{remark}

The space $\overline\Gamma_c^k(X)$ is a pre-dual to $\sD_k(X)$ in the sense of Theorem \ref{thm:pre_dual}. In the other direction, we remark that De Pauw, Hardt and Pfeffer consider in \cite{dep17} the dual of normal currents, whose elements are termed \emph{charges}. We do not consider charges here and merely note that the bidual $\overline\Gamma_c^k(X)^{\ast\ast}$ does not coincide with the space of charges.

\subsection*{Currents of locally finite mass and partition continuous polylipschitz forms}

The extension of a current of locally finite mass, provided by Theorem \ref{thm:pre_dual}, satisfies the following natural estimate.

\begin{theorem}\label{main2}
For each $T\in M_{k,\loc}(X)$ we have
	\begin{equation}\label{main1:mass}
	|\widehat{T}(\omega)|\le \int_X\|\omega\|\ud\|T\|
	\end{equation}
	for every $\omega\in\overline\Gamma_c^k(X)$.
\end{theorem}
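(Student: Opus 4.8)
The plan is to prove the mass estimate \eqref{main1:mass} by combining the defining property of $\widehat T$ from Theorem \ref{thm:pre_dual} with a density/approximation argument reducing $\omega \in \overline\Gamma_c^k(X)$ to simple polylipschitz forms in the image of $\bar\iota$. First I would recall that the mass $\|T\|$ of a current $T \in M_{k,\loc}(X)$ is the locally finite Borel measure characterized by the property that $|T(\pi_0, \ldots, \pi_k)| \le \prod_{l=1}^k \Lip(\pi_l) \int_X |\pi_0| \, \ud\|T\|$, or more sharply, that for the localized version $T\llcorner A$ one has $\|T\|(A) = \sup \sum_i T(\pi_0^i, \ldots, \pi_k^i)$ over suitable finite families. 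The point is that the pointwise norm $\|\omega\|_x$ on $\overline\Gamma_c^k(X)$ — which corresponds to the comass and was introduced alongside $\bar\iota$ — is built precisely so that $\|\bar\iota(\pi_0,\ldots,\pi_k)\|_x \le |\pi_0(x)| \prod_{l=1}^k \operatorname{Lip}_x(\pi_l)$ (local Lipschitz constants), and hence the desired inequality holds on the dense subclass of forms of the type $\bar\iota(\pi_0,\ldots,\pi_k)$ by the mass characterization of $T$.

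Next I would carry out the approximation. A general $\omega \in \overline\Gamma_c^k(X)$ is, by construction of the étalé space $\overline{\Polys}^k(X)$, locally a finite sum of germs of elementary tensors $\pi_0 \otimes [\pi_1] \otimes \cdots \otimes [\pi_k]$, so using a partition of unity subordinate to a finite cover of the compact set $\spt\omega$ one writes $\omega$ as a finite sum $\sum_j \omega_j$ with each $\omega_j$ agreeing, on a neighborhood of its support, with some $\bar\iota(\pi_0^j,\ldots,\pi_k^j)$; here the locality built into the homogeneous construction is what lets the cut-off function be absorbed into $\pi_0^j$ without changing the relevant local data. Applying the elementary estimate to each $\omega_j$ and summing, one gets $|\widehat T(\omega)| \le \sum_j \int_X \|\omega_j\| \, \ud\|T\|$. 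To then replace the sum of the integrals by $\int_X \|\omega\| \, \ud\|T\|$ one must be careful: this is where subadditivity of the pointwise norm and a refinement of the partition so that the supports have controlled overlap (or directly the fact that the comass norm of a sum is at most the sum of comass norms, together with an exhaustion-from-below of $\int_X \|\omega\|\,\ud\|T\|$) comes in. Alternatively, and more cleanly, one approximates $\omega$ in the sequential topology of $\overline\Gamma_c^k(X)$ by forms $\omega_n$ of the simple type with $\|\omega_n\|_x \to \|\omega\|_x$ and $\sup_n \|\omega_n\|_x \le \|\omega\|_x + \varepsilon_n$ uniformly, use sequential continuity of $\widehat T$ (Theorem \ref{thm:pre_dual}) to pass $\widehat T(\omega_n) \to \widehat T(\omega)$, and use dominated convergence for the integrals against the Radon measure $\|T\|\llcorner \spt\omega$.

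The main obstacle I anticipate is the bookkeeping in the approximation step: ensuring that when $\omega$ is broken into local pieces the pointwise comass $\|\omega\|_x$ is not "over-counted" — i.e., getting $\sum_j \|\omega_j\|_x$ genuinely close to $\|\omega\|_x$ rather than merely bounded by a fixed multiple of it — and ensuring the approximating simple forms converge in the correct sequential sense with a uniform domination of their pointwise norms. This likely requires the precise description of $\|\cdot\|_x$ as a quotient/infimum norm on the fiber of $\overline{\Polys}^k(X)$ (so that a near-optimal representation of the germ of $\omega$ at each point can be chosen and, by continuity, on a neighborhood) together with a locally finite partition of unity argument; once that local-to-global passage is set up, the inequality follows from the case of elementary tensors, which is immediate from the mass characterization of $T$ and the definition of $\bar\iota$.
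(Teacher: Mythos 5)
Your proposal has a genuine gap at its very first step, and the gap is in the direction of an inequality. For an elementary form $\omega=\bar\iota(\pi_0,\ldots,\pi_k)$ the estimate you quote, $\|\bar\iota(\pi)\|_x\le |\pi_0(x)|\Lip\pi_1(x)\cdots\Lip\pi_k(x)$, is an \emph{upper} bound on the pointwise comass, while \eqref{main1:mass} asks you to bound $|T(\pi_0,\ldots,\pi_k)|$ by the integral of the comass itself. These are not interchangeable: by Definition \ref{pwnorm}, $\|\omega\|_x$ is a limit of infima $\Lip_k(\cdot;B_r(x))$ over \emph{all} local representations of the germ, and it can be much smaller than $|\pi_0(x)|\prod_l\Lip\pi_l(x)$ (it vanishes, for instance, wherever some $\pi_l$ is locally constant). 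So "the mass characterization of $T$ plus the definition of $\bar\iota$" only yields $|T(\pi)|\le\prod_l\Lip(\pi_l)\int|\pi_0|\,\ud\|T\|$ (or its pointwise-Lipschitz refinement), which is strictly weaker than the claim even for elementary tensors. Passing from that to the comass bound is precisely the content of the paper's Proposition \ref{prop:eext}: one approximates $\|\omega\|$ from above by a simple Borel function, uses outer regularity of the Radon measure $\|T\|$, chooses at each point a small ball and a representative of the germ whose local norm nearly attains $\|\omega\|_x$, passes to an overlap-compatible collection via Lemma \ref{lem:overlap}, and estimates each term using Lemma \ref{moi} --- Lang's extension of $T$ to bounded Borel $\pi_0$ with Lipschitz constants restricted to Borel sets. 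That lemma, which your proposal never invokes, is the indispensable tool; the bound \eqref{locfin} with global Lipschitz constants cannot detect the local structure of the germ, so your "immediate" base case is in effect the whole theorem.

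The reduction to elementary forms is also unsupported. A section $\omega\in\overline\Gamma_c^k(X)$ is locally the germ of an arbitrary element of $\overline\Poly^k(U)$, i.e.\ an infinite series in the projective tensor product, not a finite sum of elementary tensors, and the paper establishes no density of $\bar\iota(\sD^k(X))$ in $\overline\Gamma_c^k(X)$; in particular there is no result producing approximants $\omega_n\to\omega$ in the sense of Definition \ref{def:polyformconv} with $\|\omega_n\|_x\to\|\omega\|_x$ and a uniform dominating bound, since that notion of convergence is phrased through overlap-compatible representations and carries no control on pointwise comasses. The paper's proof avoids approximating $\omega$ altogether: because $\widehat T(\omega)$ is independent of the chosen overlap-compatible collection (the well-definedness established in Theorem \ref{ext1} and Proposition \ref{prop:eext}), one simply evaluates $\widehat T(\omega)$ on the specially chosen, near-optimal collection described above and reads off \eqref{main1:mass} directly, with no limit passage and no dominated convergence in $\omega$. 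Your closing paragraph does gesture at the right skeleton (near-optimal local representatives of the germ plus a locally finite partition of unity), but as written the two pillars of your argument --- the "immediate" elementary case and the density/limit step --- both fail, and the missing ingredients are exactly Lemma \ref{moi} and the simple-function/outer-regularity argument of Proposition \ref{prop:eext}.
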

We refer to Definition \ref{pwnorm} for the pointwise norm of a polylipschitz form. Currents of locally finite mass may further be extended, in the spirit of \cite[Theorem 4.4]{lan11}, to the space $\overline\Gamma_{\operatorname{pc},c}^k(X)$ of partition continuous polylipschitz forms, that is, the space of partition continuous sections of the sheaf $\overline\Polys^k(X)$; we refer to Section \ref{sec:aux} and Definition \ref{def:pcpolylip} for definitions and discussion.

\begin{theorem}\label{ext2}
	Let $T\in M_{k,\loc}(X)$ be a metric $k$-current of locally finite mass. Then there exists a unique sequentially continuous linear functional 
	\[ 
	\widehat T:\overline\Gamma_{\operatorname{pc},c}^k(X)\to \R 
	\] 
	satisfying $\widehat T\circ\iota=T$. Furthermore, if $T\in N_{k,\loc}(X)$, then
	\begin{equation}\label{eq:pcd}
	\widehat{\partial T}(\omega)=\widehat T(\bar d\omega)
	\end{equation}
	for each $\omega\in \overline\Gamma_{\operatorname{pc},c}^{k-1}(X)$.
\end{theorem}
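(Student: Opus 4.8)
The plan is to deduce Theorem~\ref{ext2} from Theorems~\ref{thm:pre_dual} and~\ref{main2} by a density-and-extension argument, using the mass estimate \eqref{main1:mass} to control the extension from $\overline\Gamma_c^k(X)$ to $\overline\Gamma_{\operatorname{pc},c}^k(X)$. First I would recall from Section~\ref{sec:aux} the precise structure of a partition continuous section $\omega\in\overline\Gamma_{\operatorname{pc},c}^k(X)$: on the pieces of a suitable locally finite Borel partition (or a countable cover refined by a Lipschitz partition of unity) $\omega$ agrees with genuine polylipschitz forms, and there is an associated pointwise norm $\|\omega\|_x$ which is Borel and locally bounded, so that $\int_X\|\omega\|\,\ud\|T\|<\infty$ for $T\in M_{k,\loc}(X)$ whenever $\omega$ has compact support. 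The definition of $\widehat T(\omega)$ is then forced: choosing Lipschitz cut-offs $\rho_j$ subordinate to the partition with $\sum_j\rho_j\equiv 1$ near $\spt\omega$, one writes $\omega=\sum_j\rho_j\omega$ with each $\rho_j\omega\in\overline\Gamma_c^k(X)$, and sets $\widehat T(\omega):=\sum_j\widehat T(\rho_j\omega)$, where $\widehat T$ on the right is the functional from Theorem~\ref{thm:pre_dual}.

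The key steps are then: (i) \emph{well-definedness} — show the sum $\sum_j\widehat T(\rho_j\omega)$ converges absolutely and is independent of the choice of partition and cut-offs. Absolute convergence follows from \eqref{main1:mass}: $|\widehat T(\rho_j\omega)|\le\int_X\|\rho_j\omega\|\,\ud\|T\|\le\int_{\spt\rho_j}\|\omega\|\,\ud\|T\|$, and these integrals sum to $\int_X\|\omega\|\,\ud\|T\|$ by local finiteness of the partition. Independence of the choices is a standard partition-of-unity comparison, using linearity of $\widehat T$ on $\overline\Gamma_c^k(X)$ and the fact that two partitions have a common refinement; here one uses that $\widehat T$ restricted to $\overline\Gamma_c^k(X)$ is already known to be linear and well-behaved from Theorem~\ref{thm:pre_dual}. (ii) \emph{$\widehat T\circ\iota=T$} — this is immediate since $\iota$ lands in $\overline\Gamma_c^k(X)$ and there the identity is Theorem~\ref{thm:pre_dual}; more precisely, for a tuple $(\pi_0,\dots,\pi_k)$ one has $\bar\iota(\pi_0,\dots,\pi_k)$ already compactly supported, so no partitioning is needed and $\widehat T(\iota(\pi_0,\dots,\pi_k))=\widehat T(\bar\iota(\pi_0,\dots,\pi_k))=T(\pi_0,\dots,\pi_k)$. (iii) \emph{uniqueness} — any sequentially continuous functional agreeing with $T$ on the image of $\iota$ must agree with the above on $\overline\Gamma_c^k(X)$ by Theorem~\ref{thm:pre_dual}, and then on all of $\overline\Gamma_{\operatorname{pc},c}^k(X)$ provided $\sum_{j\le J}\rho_j\omega\to\omega$ in the sequential convergence of $\overline\Gamma_{\operatorname{pc},c}^k(X)$; this convergence should follow directly from the definition of partition continuous convergence in Definition~\ref{def:pcpolylip}, since the tails are supported on shrinking parts of a locally finite cover and are uniformly controlled.

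For the last assertion, the identity \eqref{eq:pcd} for $T\in N_{k,\loc}(X)$, the plan is to reduce it to \eqref{main1:partial} of Theorem~\ref{thm:pre_dual} together with a Leibniz-type computation. Given $\omega\in\overline\Gamma_{\operatorname{pc},c}^{k-1}(X)$, write $\omega=\sum_j\rho_j\omega$ as above; then $\bar d\omega=\sum_j(\bar d\rho_j\wedge\omega+\rho_j\,\bar d\omega)$ where each summand lies in $\overline\Gamma_c^k(X)$, and one applies $\widehat T$ termwise. For $\partial T$ one has by definition $\widehat{\partial T}(\omega)=\sum_j\widehat{\partial T}(\rho_j\omega)=\sum_j\widehat T(\bar d(\rho_j\omega))$ using \eqref{main1:partial}, and the point is that $\sum_j\bar d(\rho_j\omega)=\bar d\omega$ since $\sum_j\rho_j\equiv1$ near $\spt\omega$ forces $\sum_j\bar d\rho_j=0$ there, so the ``spurious'' terms cancel; one needs to justify interchanging the (now possibly only conditionally convergent) sum with $\widehat T$, which is where one again invokes the mass bound applied to $\partial T\in M_{k-1,\loc}(X)$ and sequential continuity. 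I expect the main obstacle to be precisely this last interchange-of-limits point: verifying that the partial sums $\sum_{j\le J}\bar d(\rho_j\omega)$ converge to $\bar d\omega$ in the sequential topology of $\overline\Gamma_{\operatorname{pc},c}^{k}(X)$ (not merely pointwise), so that sequential continuity of $\widehat{\partial T}$ on that space applies — this requires a careful uniform Lipschitz bound on the cut-offs $\rho_j$ and the structure of $\bar d$ on partition continuous forms, and is the technical heart of the argument.
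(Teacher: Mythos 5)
There is a genuine gap, and it sits at the very first step: your decomposition $\omega=\sum_j\rho_j\omega$ with $\rho_j\omega\in\overline\Gamma_c^k(X)$ does not make sense for a general partition continuous form. By Definition \ref{def:pcpolylip}, $\omega\in\overline\Gamma_{\operatorname{pc},c}^k(X)$ is a (typically discontinuous) section of $\overline\Polys^k(X)$ that is only continuous when restricted to the pieces $E$ of a countable \emph{Borel} partition $\mathcal E$ of $X$. Borel pieces admit no subordinate Lipschitz cut-offs, and multiplying $\omega$ by a Lipschitz function preserves its discontinuities, so $\rho_j\omega$ is in general \emph{not} a continuous section and the functional $\widehat T$ of Theorem \ref{thm:pre_dual} simply cannot be applied to it. For the same reason the sequential density you need for uniqueness and for the boundary identity --- that the truncations $\sum_{j\le J}\rho_j\omega$ converge to $\omega$ in $\overline\Gamma_{\operatorname{pc},c}^k(X)$, i.e.\ that $\overline\Gamma_c^k(X)$ is sequentially dense in the partition continuous forms --- is not available and is not how the paper argues. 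In short, Theorem \ref{thm:pre_dual} alone cannot produce this extension: the passage from continuous to partition continuous forms is exactly where the hypothesis $T\in M_{k,\loc}(X)$ enters, and your scheme never uses the mass measure in the construction itself, only in a posteriori estimates.

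The ingredient you are missing is the restriction of a current of locally finite mass to Borel sets, via Lang's extension (Lemma \ref{moi}), which lets $T$ act on tuples whose first entry is a bounded Borel function with compact support. The paper uses this to define, for each Borel set $E$ and cut-off $\varphi$, maps $T_\varphi^{E\cap U}$ (essentially $(\pi_0,\dots,\pi_k)\mapsto T(\chi_E\varphi\pi_0,\dots,\pi_k)$), then a functional $\widehat T_E$ on sections continuous over $E$ (Proposition \ref{prop:eext}, where the integral bound \eqref{eq:intbound} is proved directly, rather than quoted from Theorem \ref{main2} --- in fact Theorem \ref{main2} is a consequence of it, not an input), and finally sets $\widehat T(\omega)=\sum_{E\in\mathcal E}\widehat T_E(\omega)$ over the admissible Borel partition (Proposition \ref{prop:pcext}). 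Uniqueness is not obtained by density but from the mass bound, by testing against $\varphi_U\smile\omega$ and invoking the uniqueness of the local maps $\overline{T_{\varphi_U}^{E\cap U}}$ (Proposition \ref{prop:eextunique}); and the identity \eqref{eq:pcd} follows from this uniqueness once one checks that $\bar d$ maps $\overline\Gamma_{\operatorname{pc},c}^{k-1}(X)$ into $\overline\Gamma_{\operatorname{pc},c}^{k}(X)$ sequentially continuously (Proposition \ref{prop:pcdext}), with no Leibniz-type cancellation of cut-off terms needed. So while steps (ii) and the general shape of your estimates are in the right spirit, the construction, the well-definedness argument, the uniqueness argument, and the proof of \eqref{eq:pcd} all have to be rebuilt around Borel restrictions of $T$ rather than Lipschitz truncations of $\omega$.
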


Theorem \ref{ext2} follows directly from Proposition \ref{prop:pcext} and  Corollary \ref{pcbdry}, while Theorem \ref{main2} is implied by the more technical statement in Proposition \ref{prop:eext}. Note that, in Theorem \ref{ext2}, we do not claim that $\overline\Gamma_{\operatorname{pc},c}^k(X)$ is a pre-dual of $M_{k,loc}(X)$.

\subsection*{Motivation: Pull-back of metric currents by BLD-maps} In \cite{teripekka2} we apply the duality theory developed in this paper to a problem in geometric mapping theory. To avoid the added layer of abstraction involved in polylipschitz forms we formulate the results in \cite{teripekka2} for polylipschitz sections, which are sufficient for our purposes. For this reason, in Section \ref{sec:Alt} we briefly discuss duality theory in connection with polylipschitz sections.

In the Ambrosio--Kirchheim theory a Lipschitz map $f\colon X\to Y$ induces a natural push-forward $f_\# \colon M_{k}(X)\to M_{k}(Y)$. In the classical setting of Euclidean spaces, this push-forward is associated to the pull-back of differential forms under the mapping $f$. In \cite{teripekka2}, we consider BLD-mappings $f\colon X\to Y$ between metric generalized $n$-manifolds. A mapping $f\colon X\to Y$ is a mapping of bounded length distortion (or \emph{BLD} for short) if $f$ is a discrete and open mapping for which there exists a constant $L\ge 1$ satisfying
\[
\frac{1}{L} \ell(\gamma) \le \ell(f\circ \gamma) \le L \ell(\gamma)
\]
for all paths $\gamma$ in $X$, where $\ell(\cdot)$ is the length of a path. We refer to Martio--V\"ais\"al\"a \cite{mar88} and Heinonen--Rickman \cite{hei02} for detailed discussions on BLD-mappings between Euclidean and metric spaces, respectively.

For a BLD-mapping $f\colon X\to Y$ between locally compact spaces, the polylipschitz sections admit a push-forward $f_\# \colon \Gamma^k_c(X)\to \Parcon(Y)$, which in turn induces a natural pull-back $f^* \colon M_{k,\loc}(Y) \to M_{k,\loc}(X)$ for metric currents. We refer to \cite{teripekka2} for detailed statements and further applications.

\medskip\noindent {\bf Acknowledgments} We thank Rami Luisto and Stefan Wenger for discussions on the topics of the manuscript.


\section{Spaces of Lipschitz functions}
\label{sec:preliminaries}

We write $A\lesssim_{\mathcal C} B$ if there is a constant $c>0$ depending only on the parameters in the collection $\mathcal C$, for which $A\le c B$. We write $A\simeq_{\mathcal C}B$ if $A\lesssim_{\mathcal C} B\lesssim_{\mathcal C} A$.

Let $X$ be a metric space. We denote by $B_r(x)\subset X$ the open ball of radius $r>0$ about $x\in X$. The closed ball of radius $r>0$ about $x\in X$ is denoted by $\bar B_r(x)$.

\subsection{The spaces $\LIP_c$ and $\LIP_\infty$}
\label{sec:Lipschitz_constants} 

Given a Lipschitz map $f\colon X\to Y$ between metric spaces $(X,d)$ and $(Y,d')$, we denote by
\[ 
\Lip(f)=\sup_{x\ne y}\frac{d'(f(x),f(y))}{d(x,y)} 
\] 
the \emph{Lipschitz constant of $f$}. Further, for each $x\in X$, we denote
\begin{align*}
\Lip f(x)=\limsup_{r\to 0}\sup_{y,z\in B(x,r)}\frac{d'(f(y),f(z))}{d(z,y)}.
\end{align*}
the \emph{asymptotic} Lipschitz constant of $f$ at $x$.

For Lipschitz functions $f\colon X\to \R$, we introduce the norms
\[ 
\|f\|_\infty=\sup_{x\in X}|f(x)|\quad \text{and}\quad
L(f)=\max\{\Lip(f),\|f\|_\infty \}. 
\]

In what follows, we denote by $\LIP_\infty(X)$ the space of all \emph{bounded} Lipschitz  functions on $X$. Note that $(\LIP_\infty(X),L(\cdot))$ is a Banach space \cite{wea99}. Given a compact set $K\subset X$ we denote by $\LIP_K(X)$ the subspace of functions $f\in\LIP_\infty(X)$ satisfying $\spt\pi\subset K$.

The subspace $\LIP_c(X)\subset \LIP(X)$ consisting  of \emph{compactly supported} Lipschitz functions on $X$ is the union $$\LIP_c(X)=\bigcup\left\{\LIP_K(X): K\subset X\textrm{ compact}\right\}.$$ For each $k\in \N$, we also denote by $\sD^k(X)$ the product space 
\[
\sD^k(X)=\LIP_c(X)\times \LIP_\infty(X)^k.
\]

\subsection{Homogeneous Lipschitz space}\label{sec:hlip}

We introduce now the homogeneous Lipschitz space $\overline\LIP_\infty(X)$. The term homogeneous is taken from the theory of Sobolev spaces, where analogous homogeneous Sobolev spaces are defined. 

Let $\sim$ be the equivalence relation in $\LIP_\infty(X)$ for which $f \sim f'$ if $f-f'$ is a constant function. We denote the equivalence class of $f\in \LIP_\infty(X)$ by $\bar f$ and give the quotient space $\overline\LIP_\infty(X):=\LIP_\infty(X)/{\sim}$ the quotient norm
\[
\bar L(\bar f):=\inf\{ L(f-c):c\in \R\}.
\]
The natural projection map
\[
q=q_X:\LIP_\infty(X)\to \overline\LIP_\infty(X),\quad f\mapsto \bar f,
\]
is an open surjection satisfying
\[
\bar L(\bar f)=\inf\{ L(g): q(g)=\bar f \}.
\]

Note that, given a subset $E\subset X$, the restriction map
\[
r_{E,X}:\LIP_\infty(X)\to \LIP_\infty(E),\quad f\mapsto f|_E,
\]
descends to a quotient map 
\begin{equation}\label{prerestr}
\bar r_{E,X}:\overline\LIP_\infty(X)\to \overline\LIP_\infty(E)
\end{equation}
satisfying
\begin{equation}\label{eq:restrcomm}
q_E\circ r_{E,X}=\bar r_{E,X}\circ q_X.
\end{equation}
This remark will be used later in Section \ref{sec:germs}.

\subsection{Sequential convergence}

Following Lang \cite{lan11}, we give the spaces $\LIP_c(X)$ and $\LIP_\infty(X)$ with the topology of weak converge. We recall the notion of convergence of sequences in $\LIP_c(X)$ and refer to \cite{lan11} for the definition of the corresponding topology; see also Ambrosio--Kirchheim \cite{amb00}.

\begin{definition}
A sequence $(f_n)$ in $\LIP_c(X)$ \emph{converges weakly to a function $f\colon X\to \R$ in $\LIP_c(X)$}, denoted $f_n \to f$ in $\LIP_c(X)$, if 
\begin{enumerate}
\item $\sup_n\Lip(f_n)<\infty$, 
\item the set $\bigcup_n \spt f_n$ is pre-compact, and
\item $f_n \to f$ uniformly as $n \to \infty$.
\end{enumerate}
\end{definition}

In \cite{lan11} Lang defines a topology on a larger space $\LIP_\loc(X)$, the space of \emph{locally Lipschitz} functions, containing $\LIP_\infty(X)$. The weak convergence induced by this topology for sequences could be used for functions in $\LIP_\infty(X)$ as well. However $\LIP_\infty(X)\subset\LIP_\loc(X)$ is not a closed subspace in this topology and we find it is more convenient to modify the notion of convergence to suit the space $\LIP_\infty(X)$ better. This does not cause any significant issues in the subsequent discussions. The weak convergence of sequences in $\LIP_\infty(X)$ is defined as follows.

\begin{definition}\label{multicont}
A sequence $(f_n)$ in $\LIP_\infty(X)$ \emph{converges to a function $f \colon X\to \R$ in $\LIP_\infty(X)$}, denoted $f_n\to f$ in $\LIP_\infty(X)$, if 
\begin{enumerate}
\item $\sup_n\Lip(f_n)<\infty$, and 
\item $f_n|_K\to f|_K$ uniformly as $n\to \infty$, for every compact set $K\subset X$.
\end{enumerate}
\end{definition}
This notion of convergence for sequences arises from a topology in a similar manner as in \cite{lan11}. Another description of this topology is given in \cite[Theorems 2.1.5 and 1.7.2]{wea99} in terms of the weak* topology with respect to the Arens--Eells space, which is a predual of $\LIP_\infty(X)$.

We equip the product space $\sD^k(X)=\LIP_c(X)\times \LIP_\infty(X)^k$ with the sequential convergence arising from the product topology of the factors $\LIP_c(X)$ and $\LIP_\infty(X)$.

A sequence $(\pi^n)$ of $(k+1)$-tuples $\pi^n=(\pi_0^n,\ldots,\pi^n_k)\in \sD^k(X)$ converges to a $(k+1)$-tuple $\pi=(\pi_0,\ldots,\pi_k)$ in $\sD^k(X)$ if and only if 
\begin{enumerate}
\item $\pi_0^n\to \pi_0$ in $\LIP_c(X)$ and
\item $\pi^n_l\to \pi_l$ in $\LIP_\infty(X)$, for each $l=1,\ldots,k$
\end{enumerate}
as $n\to \infty$.

We finish this section by defining the sequential continuity of multilinear functionals on $\sD^k(X)$.

\begin{definition} 
A multilinear functional $T\colon \sD^k(X) \to \R$ is \emph{sequentially continuous} if, for each sequence $(\pi^n)$ converging to $\pi$ in $\sD^k(X)$, 
\[
\lim_{n\to\infty}T(\pi^n)= T(\pi).
\]
\end{definition}

\section{Metric currents} 
\label{label:metric_currents}

Let $X$ be a locally compact metric space. A sequentially continuous multilinear functional $T\colon \sD^k(X) \to \R$  is a \emph{metric $k$-current} if it satisfies the following locality condition:
\begin{itemize}
\item[] for each $\pi_0\in \LIP_c(X)$ and $\pi_1,\ldots, \pi_k\in \LIP_\infty(X)$ having the property that one of the functions $\pi_i$ is constant in a neighborhood of $\spt \pi_0$, we have $T(\pi_0,\ldots, \pi_k) = 0$.
\end{itemize}

By \cite[(2.5)]{lan11} the assertion in the locality condition holds if one of the $\pi_i$'s, $i=1,\ldots,k$, is constant on $\spt\pi_0$, i.e. no neighborhood is needed in the locality condition. We denote by $\sD_k(X)$ the vector space of $k$-currents on $X$. 

\begin{remark}
In \cite{lan11} metric currents are defined as weakly continuous $(k+1)$-linear functionals on $\LIP_c(X)\times\LIP_\loc(X)^k$. The present notion however coincides with this class, see \cite[Lemma 2.2]{lan11}.
\end{remark}

\begin{definition}
A sequence of $k$-currents $T_j \colon \sD^k(X)\to \R$ on $X$ \emph{converges weakly to a $k$-current $T \colon \sD^k(X) \to \R$ in $\sD_k(X)$}, denoted $T_j\to T$ in $\cD^k(X)$, if, for each $\pi\in \sD^k(X)$, 
\[
\lim_{j\to\infty}T_j(\pi)= T(\pi).
\]
\end{definition}

The locality condition implies that the value $T(\pi_0,\ldots,\pi_k)$ of a current $T\in \sD_k(X)$ at $(\pi_0,\ldots,\pi_k)\in \sD^k(X)$ only depends on the restriction $\pi_0|_K\in \LIP_\infty(K)$ and the equivalence classes of the restrictions  $\overline{\pi_1|_K},\ldots,\overline{\pi_k|_K}\in\overline\LIP_\infty(K)$, where $K=\spt\pi_0$.

Given a compact set $K\subset X$, we define $T: \LIP_K(X)\times\LIP_\infty(K)^k\to \R$ as 
\[ 
T(\pi_0,\ldots,\pi_k):=T(\pi_0,\widetilde{\pi_1},\ldots,\widetilde{\pi_k}) 
\]
for any Lipschitz extensions $\widetilde{\pi_i}\in \LIP_\infty(X)$ of $\pi_i\in \LIP_\infty(K)$, for $i=1,\ldots,k$. This yields a $(k+1)$-linear sequentially continuous functional. More precisely, there exists $C>0$ for which 
\begin{equation}
\label{eq:bound}
|T(\pi_0,\ldots,\pi_k)|\le CL(\pi_0|_K)L(\pi_1)\cdots L(\pi_k) 
\end{equation}
for all $\pi_0\in \LIP_{K}(X)$, $\pi_1,\ldots,\pi_k\in \LIP_\infty(K)$.

\subsection{Mass of a current}
A $k$-current $T \colon \sD^k(X) \to \R$ has \emph{locally finite mass} if there is a Radon measure $\mu$ on $X$ satisfying
\begin{equation}\label{locfin}
|T(\pi_0,\ldots,\pi_k)|\le \Lip(\pi_1)\cdots\Lip(\pi_k)\int_X|\pi_0|\ud\mu
\end{equation}
for each $\pi = (\pi_0,\ldots, \pi_k) \in \sD^k(X)$.

For a $k$-current $T$ of locally finite mass, there exists a measure $\|T\|$ of minimal total variation satisfying \eqref{locfin}; see Lang \cite[Theorem 4.3]{lan11}. The measure $\|T\|$ is the \emph{mass measure of $T$}. If $\|T\|(X)<\infty$, we say $T$ has \emph{finite mass}. We denote by $M_{k,\loc}(X)$ and $M_k(X)$ the spaces of $k$-currents of locally finite mass and of finite mass, respectively.

The map $\sD^k(X)\to \mathscr M(X)$, $T\mapsto \|T\|$, is lower semicontinuous with respect to weak convergence, that is, if the sequence $(T_j)$ in $M_{k,\loc}(X)$ weakly converges to $T\in M_{k,\loc}(X)$ then 
\[ 
\|T\|(U)\le \liminf_{j\to\infty}\|T_j\|(U)
\]
for each open set $U\subset X$. Note that, 
\[
\spt T=\spt\|T\|.
\]
We refer to Lang \cite{lan11} for these results.

A current $T \colon \sD^k(X)\to \R$ of locally finite mass admits a weakly continuous extension 
\[ 
T:\sB^\infty_c(X)\times\LIP_\infty(X)^k\to \R 
\] 
satisfying (\ref{locfin}); see Lang \cite[Theorem 4.4]{lan11}. Here $\sB_c^\infty(X)$ denotes the space of compactly supported and bounded Borel functions on $X$; here the convergence of functions in $\sB^\infty_c(X)$ is the  pointwise convergence. Inequality \eqref{locfin} holds also for this extension. We record this as a lemma.

\begin{lemma}[{\cite[Theorem 4.4]{lan11}}] 
\label{moi}
Let $T\in M_{k,\loc}(X)$ be a $k$-current of locally finite mass. Then
\[ 
|T(\pi_0,\ldots,\pi_k)|\le \Lip(\pi_1|_E)\cdots \Lip(\pi_k|_E)\int_E|\pi_0|\ud\|T\| 
\] 
for each $(k+1)$-tuple $(\pi_0,\ldots,\pi_k)\in \sB^\infty_c(X)\times\LIP(X)^k$ and Borel set $E\supset \{\pi_0\ne 0 \}$ 
\end{lemma}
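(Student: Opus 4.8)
The plan is to bootstrap from the extension and mass bound already supplied by Lang's Theorem~4.4 \cite[Theorem 4.4]{lan11}, which provides a weakly continuous functional $T\colon\sB^\infty_c(X)\times\LIP_\infty(X)^k\to\R$ satisfying \eqref{locfin}, and to sharpen the bound by a localization argument. Two of the three refinements are essentially free. First, since $\{\pi_0\ne 0\}\subset E$ we have $\int_E|\pi_0|\,\ud\|T\|=\int_X|\pi_0|\,\ud\|T\|$. Second, since $\spt\pi_0\subset\overline E$ and each $\pi_i$ is continuous, $\Lip(\pi_i|_{\spt\pi_0})\le\Lip(\pi_i|_{\overline E})=\Lip(\pi_i|_E)$. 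Hence it suffices to prove the estimate with $E$ replaced by the compact set $K:=\spt\pi_0$, that is, $|T(\pi_0,\dots,\pi_k)|\le\prod_{i=1}^k\Lip(\pi_i|_K)\int_X|\pi_0|\,\ud\|T\|$; the general statement, and in particular the extension to unbounded $\pi_i\in\LIP(X)$, will fall out of the argument.

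The central step is the locality of the Borel-extended current: if $\pi_0\in\sB^\infty_c(X)$ and $\sigma\colon X\to\R$ is Lipschitz with $\sigma|_K=0$, then $T$ vanishes when $\sigma$ is inserted in any slot $i\ge 1$. For Lipschitz $\pi_0$ this is the locality condition together with \cite[(2.5)]{lan11}; for Borel $\pi_0$ I would approximate $\pi_0$ boundedly and $\|T\|$-a.e. by Lipschitz functions supported in $K$ (using that by \eqref{locfin} the value $T(\pi_0,\dots)$ depends on $\pi_0$ only modulo $\|T\|$-null sets) and then pass to the limit via the weak continuity of the extension. Granting this, for bounded Lipschitz $\pi_1,\dots,\pi_k$ I would replace each $\pi_i$ by a McShane extension $\widehat\pi_i\in\LIP_\infty(X)$ of the bounded function $\pi_i|_K$, so that $\Lip(\widehat\pi_i)=\Lip(\pi_i|_K)$ and $\widehat\pi_i$ agrees with $\pi_i$ on $K\supset\{\pi_0\ne 0\}$; applying the locality statement one slot at a time gives $T(\pi_0,\pi_1,\dots,\pi_k)=T(\pi_0,\widehat\pi_1,\dots,\widehat\pi_k)$, and then \eqref{locfin} applied to the right-hand side yields exactly the reduced estimate. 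For $\pi_i\in\LIP(X)$ not covered a priori by Lang's extension, I would simply define $T(\pi_0,\pi_1,\dots,\pi_k):=T(\pi_0,\widehat\pi_1,\dots,\widehat\pi_k)$, the locality statement ensuring independence of the chosen bounded Lipschitz extensions, after which the same estimate holds verbatim.

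The hard part will be the locality of the extension in the Borel variable: one must check that Lang's construction in \cite[Theorem 4.4]{lan11} is compatible with approximating a bounded compactly supported Borel function by Lipschitz functions of controlled support and that weak continuity survives this approximation — this is precisely where the representation of a current of locally finite mass by integration against $\|T\|$ is used. Everything else — the two reductions on $E$, the McShane extension, and the slot-by-slot bookkeeping — is routine once that point is secured.
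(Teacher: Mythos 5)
Your two reductions (replacing $\int_E$ by $\int_X$, and $\Lip(\pi_i|_E)$ by $\Lip(\pi_i|_{\spt\pi_0})$ via $\spt\pi_0\subset\overline E$ and continuity), the McShane replacement $\pi_i\rightsquigarrow\widehat\pi_i$, and the slot-by-slot use of locality are all fine, and for \emph{Lipschitz} $\pi_0$ they do yield the estimate from \eqref{locfin}. The genuine gap is exactly the step you flag as hard, and your proposed mechanism for it fails: you cannot in general approximate a bounded Borel $\pi_0$ boundedly and $\|T\|$-a.e.\ by Lipschitz functions supported in $K=\spt\pi_0$. If $K$ has empty interior, every continuous function with support contained in $K$ is identically zero, while $\|T\|(K)$ may well be positive; for instance take $X=\R$, $T(f,g)=\int f g'\,dx$ (so $\|T\|$ is Lebesgue measure), and $\pi_0=\chi_C$ for a fat Cantor set $C$, or a current concentrated on a lower-dimensional set. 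In such cases your approximating sequence is forced to be $0$ and does not converge to $\pi_0$ $\|T\|$-a.e., so the limit argument for the locality statement "$\sigma|_K=0\Rightarrow T(\pi_0,\ldots,\sigma,\ldots)=0$" never gets started; and this locality is essentially equivalent to the refined inequality you are trying to prove, so the argument as written is circular at its core.

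For context: the paper offers no proof of this lemma at all -- it is quoted verbatim from Lang's Theorem 4.4, where the localized bound over Borel sets $E\supset\{\pi_0\ne 0\}$ (and the extension to $\LIP$ rather than $\LIP_\infty$ entries) is part of the cited statement, so there is nothing to re-derive inside this paper. If you do want a self-contained derivation from the cruder facts (extension to $\sB^\infty_c(X)\times\LIP_\infty(X)^k$, continuity, and \eqref{locfin}), the approximation has to be organized the other way around: given $\sigma$ Lipschitz with $\sigma|_K=0$, first truncate, setting $\sigma_j:=(\sigma-1/j)^+-(\sigma+1/j)^-$, so that $\Lip(\sigma_j)\le\Lip(\sigma)$ and $\sigma_j$ vanishes on the \emph{open} neighborhood $U_j:=\{|\sigma|<1/j\}\supset K$; then approximate $\pi_0$ in $L^1(\|T\|)$ by Lipschitz functions compactly supported in $U_j$ (possible since $U_j$ is open and $X$ is locally compact, using a Lipschitz cutoff equal to $1$ on $\spt\pi_0$), for which $T(\cdot,\ldots,\sigma_j,\ldots)=0$ by the locality axiom; pass to the limit in the Borel slot using \eqref{locfin}, and finally let $j\to\infty$ using the continuity of the extension in the Lipschitz slots. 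With that replacement your outline closes; as written, the key step would fail.
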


\subsection{Normal currents}
 
For the definition of a normal current, we first define the boundary operators 
\[
\partial = \partial_k  \colon \sD_k(X)\to \sD_{k-1}(X)
\]
for each $k\in \Z$. For this reason, we set $\sD_k(X)=0$ for $k<0$ and $\partial_k = 0$ for $k\le 0$.

For $k\ge 1$, the \emph{boundary $\partial T\colon \sD^{k-1}(X)\to \R$ of $T$} is the $(k-1)$-current defined by
\[
\partial T( \pi_0, \ldots, \pi_{k-1}) = T(f, \pi_0, \ldots, \pi_{k-1}),
\]
for $(\pi_0,\ldots,\pi_k)\in \sD^k(X)$, where $f\in \LIP_c(X)$ is any Lipschitz function  with compact support satisfying
\[
f|_{\spt \pi_0} \equiv 1.
\] 
The current $\partial T$ is well-defined, see \cite[Definition 3.4]{lan11}.

As a consequence of the locality of currents, we have that 
\[
\partial_{k-1} \circ \partial_k \equiv 0 \colon \sD_k(X) \to \sD_{k-2}(X)
\] 
for each $k\in \Z$, see the discussion following \cite[Definition 3.4]{lan11}.

\begin{definition}
A $k$-current $T\in M_{k,\loc}(X)$ is \emph{locally normal} if $\partial T\in M_{k-1,\loc}(X)$. A $k$-current $T\in M_{k}(X)$ is \emph{normal} if $\partial T\in M_{k-1}(X)$. 
\end{definition}

We denote by $N_{k,\loc}(X) \subset M_{k,\loc}(X)$ and $N_k(X) \subset M_k(X)$ the subspaces of locally normal $k$-currents and normal $k$-currents on $X$, respectively. Note that the space $N_0(X)$ of normal 0-currents coincides with the space $\mathscr M(X)$ of all finite signed Radon measures on $X$.

\begin{remark}
By the lower semicontinuity of mass, if a bounded sequence $(T_j)$ in $N_k(X)$ weakly converges to a $k$-current $T\in M_{k,\loc}(X)$, then $T\in N_k(X)$. 
\end{remark}

\section{Polylipschitz functions and their homogeneous counterparts}
\label{sec:polylipschitz_forms}

In this section we develop the notion polylipschitz functions and their homogeneous counterparts. In the sequel we occasionally refer to (homogeneous) polylipschitz forms and functions defined on Borel subsets of a locally compact space. Since these subsets are not necessarily locally compact, and since the treatment remains essentially the same, we formulate all notions in this section for arbitrary metric spaces.

\subsection*{Algebraic and projective tensor product} 
For the material on the tensor product, projective norm and projective tensor product, we refer to \cite[Sections 1 and 2]{rya02}. 

Let $(V_1,\|\cdot\|_1),\ldots, (V_k,\|\cdot\|_k)$ be Banach spaces. We denote the \emph{algebraic tensor product} of $V_1,\ldots,V_k$ by $V_1\otimes \cdots \otimes V_k$. There is a natural $k$-linear map
\[
\jmath:V_1\times\cdots\times V_k\to V_1\otimes\cdots\otimes V_k,\quad (v_1,\ldots,v_k)\to v_1\otimes\cdots\otimes v_k.
\]
The \emph{projective norm} of $v\in V_1\otimes\cdots\otimes V_k$ is
\begin{equation}\label{eq:projnorm}
\pi(v)=\inf\left\{\sum_j^n \|v_1^j\|_1\cdots \|v_k^j\|_k: v=\sum_j^nv_1^j\otimes\cdots\otimes v_k^j  \right\}.
\end{equation}
The projective norm is \emph{a cross norm} \cite[Proposition 2.1]{rya02}, that is, for $v_l\in V_l$, $l=1,\ldots,k$, we have
\begin{equation}\label{eq:projnorm2}
\pi(v_1\otimes\cdots\otimes v_k)=\|v_1\|_1\cdots \|v_k\|_k.
\end{equation}
Note that by (\ref{eq:projnorm2}) the canonical $k$-linear map $\jmath$ is continuous; see \cite[Theorem 2.9]{rya02}.

The normed vector space $( V_1\otimes\cdots\otimes V_k,\pi)$ is typically not complete. Its completion $$ V_1\hat\otimes_\pi\cdots\hat\otimes_\pi V_k$$ is called the \emph{projective tensor product}. We denote by $\widehat \pi:V_1\hat\otimes_\pi\cdots\hat\otimes_\pi V_k\to [0,\infty)$ the norm on the completion $V_1\hat\otimes_\pi\cdots\hat\otimes_\pi V_k$ of $V_1\otimes\cdots\otimes V_k$.

It should be noted that the projective norm is one of many possible norms on the algebraic tensor product, each giving rise to a completion. In general these completions are not isomorphic and there is no canonical completion. However the projective tensor product has the following \emph{universal property} which characterizes it up to isometric isomorphism in the category of Banach spaces: Let $B$ be a Banach space and $$A:V_1\times\cdots\times V_k\to B$$ a continuous $k$-linear map. Then there exists a unique \emph{continuous linear map} $$ \overline A:V_1\hat\otimes_\pi\cdots\hat\otimes_\pi V_k\to B$$ for which the diagram
\begin{equation}\label{diagram:proj}
\begin{tikzcd}
V_1\times\cdots\times V_k\arrow[swap]{d}{\jmath} \arrow[r, "A"] & B\\
V_1\hat\otimes_\pi\cdots\hat\otimes_\pi V_k\arrow[swap]{ur}{\overline A}
\end{tikzcd}
\end{equation}
commutes.

Heuristically, the elements of $V_1\hat\otimes_\pi\cdots\hat\otimes_\pi V_k$ can be viewed as series or as summable sequences. More precisely, we have the following result.

\begin{theorem}\cite[Proposition 2.8]{rya02}
	Let $V_1,\ldots,V_k$ be Banach spaces, and let $v\in V_1\hat\otimes_\pi\cdots\hat\otimes_\pi V_k$. Then there is a sequence $(v_1^j,\ldots,v_k^j)_j$ in $V_1\times\cdots\times V_k$ for which 
	\begin{equation}\label{tensor:poly}
	\sum_j^\infty \|v_1^j\|_1\cdots \|v_k^j\|_k<\infty
	\end{equation}
	and
	\begin{equation}\label{tensor:polylip}
	\lim_{n\to\infty}\widehat \pi(v-\sum_j^nv_1^j\otimes\cdots\otimes v_k^j)=0.
	\end{equation}
\end{theorem}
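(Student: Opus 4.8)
The plan is to reduce the statement to the defining property of the completion of a normed space together with the scaling normalization of elementary tensors. Recall that $V_1\hat\otimes_\pi\cdots\hat\otimes_\pi V_k$ is by definition the completion of $(V_1\otimes\cdots\otimes V_k,\pi)$, so the algebraic tensor product sits densely inside it. First I would pick, for the given $v$, a sequence $(u_n)$ in $V_1\otimes\cdots\otimes V_k$ with $\widehat\pi(v-u_n)\to 0$; passing to a subsequence I may assume $\widehat\pi(v-u_n)<2^{-n}$ for all $n$, so that $\widehat\pi(u_{n}-u_{n-1})<2^{-n+2}$ (with the convention $u_0=0$). Then $v=\sum_{n\ge 1}(u_n-u_{n-1})$ with the partial sums converging to $v$ in $\widehat\pi$-norm.

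Next I would expand each increment $u_n-u_{n-1}\in V_1\otimes\cdots\otimes V_k$ as a \emph{finite} sum of elementary tensors using the definition \eqref{eq:projnorm} of the projective norm: since $\pi(u_n-u_{n-1})=\widehat\pi(u_n-u_{n-1})<2^{-n+2}$, I can choose a finite representation $u_n-u_{n-1}=\sum_{i=1}^{m_n} w_{1}^{n,i}\otimes\cdots\otimes w_{k}^{n,i}$ with $\sum_{i=1}^{m_n}\|w_1^{n,i}\|_1\cdots\|w_k^{n,i}\|_k<2^{-n+3}$. Concatenating these finite lists over all $n$ produces a single sequence $(v_1^j,\ldots,v_k^j)_j$ in $V_1\times\cdots\times V_k$ whose tensor terms, read off in blocks, have partial sums $\sum_{\text{first }n\text{ blocks}} v_1^j\otimes\cdots\otimes v_k^j = u_n \to v$; this gives \eqref{tensor:polylip} along the block-partial-sums, and since within a block the added terms are small (their $\pi$-sum is $<2^{-n+3}$) the full sequence of partial sums converges as well, yielding \eqref{tensor:polylip} verbatim. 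The absolute summability \eqref{tensor:poly} is immediate from $\sum_n 2^{-n+3}<\infty$.

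The only genuinely delicate point is a normalization step: the definition of $\pi$ controls $\sum_i \|w_1^{n,i}\|_1\cdots\|w_k^{n,i}\|_k$, but to match the exact form of the statement one wants the individual factors of each elementary tensor to be well-behaved. This is handled by the homogeneity of elementary tensors: for scalars $\lambda_1,\ldots,\lambda_k$ with $\lambda_1\cdots\lambda_k=1$ we have $w_1\otimes\cdots\otimes w_k=(\lambda_1 w_1)\otimes\cdots\otimes(\lambda_k w_k)$ while by \eqref{eq:projnorm2} the product of norms is unchanged, so one may rescale each term to have, say, all factors of equal norm without affecting anything. I expect this rescaling bookkeeping, and the care needed to pass from convergence of block-partial-sums to convergence of all partial sums, to be the main (though routine) obstacle; the rest is a direct unwinding of the definitions of completion and projective norm. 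In fact, for the applications in this paper only \eqref{tensor:poly} and \eqref{tensor:polylip} as stated are needed, so the rescaling can be omitted entirely and the argument above suffices.
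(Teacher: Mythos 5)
Your argument is correct and is essentially the standard proof of \cite[Proposition 2.8]{rya02}, which the paper only cites rather than proves: density of the algebraic tensor product in the completion, a telescoping decomposition, and finite representations of the increments chosen via the definition \eqref{eq:projnorm} of the projective norm, with the block-to-full partial-sum passage handled exactly as you describe. The only blemish is the claimed bound $\widehat\pi(u_1-u_0)<2$ (with $u_0=0$), which fails when $\widehat\pi(v)\ge 2$; this is harmless, since the first block need only contribute a finite amount to \eqref{tensor:poly}, and the rescaling you discuss is indeed unnecessary for the statement as given.
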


\subsection{Polylipschitz functions}\label{sec:polylipfns}

We define in this section polylipschitz functions and consider their representations. The counterpart of this discussion for homogeneous polylipschitz functions is postponed to Section \ref{sec:hpoly}.

\begin{definition}\label{defn:polylipfns}
Let $X$ be a metric space and $k\in\N$. A \emph{$k$-polylipschitz function} on $X$ is an element in the $(k+1)$-fold projective tensor product $$\displaystyle \Poly^k(X):=\LIP_\infty(X)\hat\otimes_\pi\cdots\hat\otimes_\pi\LIP_\infty(X).$$
We denote by $L_k(\cdot)$ the projective tensor norm on $\Poly^k(X)$.
\end{definition}

Given $\pi_0,\ldots\pi_k\in\LIP_\infty(X)$ the tensor product $\pi_1\otimes\cdots\otimes\pi_k\in\Poly^k(X)$ may be identified with the function in $\LIP_\infty(X^{k+1})$ given by
\[
(x_0,\ldots,x_k)\mapsto \pi_0(x_0)\cdots\pi_k(x_k).
\]
Indeed, if $\Phi_X:\LIP_\infty(X)^{k+1}\to \LIP_\infty(X^{k+1})$ is the continuous $(k+1)$-linear map given by
\[
\Phi_X(\pi_0,\ldots,\pi_k)(x_0,\ldots,x_k):=\pi_0(x_0)\cdots\pi_k(x_k), \quad (x_0,\ldots,x_k)\in X^{k+1},
\]
the algebraic tensor product $\LIP_\infty(X)\otimes\cdots\otimes\LIP_\infty(X)$ may be identified with the linear span of $\Phi_X(\LIP_\infty(X)^{k+1})$ and the unique continuous linear map $\overline\Phi_X:\Poly^k(X)\to\LIP_\infty(X^{k+1})$ making the diagram (\ref{diagram:proj}) commute is injective.

Thus, we may regard a polylipschitz function $\pi\in \Poly^k(X)$ as a function $\pi:X^{k+1}\to \R$ for which there exists a sequence $(\pi_0^j,\ldots,\pi_k^j)_j$ in $\LIP_\infty(X)^{k+1}$ satisfying 
\begin{equation}\label{poly}
\sum_j^\infty L(\pi_0^j)\cdots L(\pi_k^j)<\infty
\end{equation}
and
\begin{equation}\label{polylip}
\pi=\sum_j^\infty \pi_0^j\otimes\cdots\otimes\pi_k^j
\end{equation}
pointwise. That is, we may identify $\Poly^k(X)$ with $\overline\Phi_X(\Poly^k(X))\subset \LIP_\infty(X^{k+1})$ as sets.

\begin{definition}\label{thm:represent}
	For $\pi\in\Poly^k(X)$, any sequence $(\pi_0^j,\ldots\pi_k^j)_j$ in $\LIP_\infty(X)^{k+1}$ satisfying (\ref{poly}) and (\ref{polylip}) -- or, equivalently (\ref{tensor:poly}) and (\ref{tensor:polylip}) --  is said to \emph{represent $\pi$.} We denote the collection of such sequences by $\Rep(\pi)$.
\end{definition}
Conversely, if a sequence $(\pi_0^j,\ldots,\pi_k^j)$ in $\LIP_\infty(X)^{k+1}$ satisfies (\ref{poly}) it represents a polylipschitz function. 

We denote by
\begin{equation}\label{eq:jmath}
\jmath=\jmath_X^k:\LIP_\infty(X)^{k+1}\to\Poly^k(X),\quad (\pi_0,\ldots,\pi_k)\mapsto \pi_0\otimes\cdots\otimes\pi_k
\end{equation}
the natural $(k+1)$-linear bounded map, cf. (\ref{diagram:proj}).

For metric spaces the standard McShane extension for Lipschitz functions yields immediately an extension also for polylipschitz functions. We record this as a lemma.

\begin{lemma}
	\label{ext}
	Let $X$ be a metric space, $E\subset X$ a subset, and let $\pi\in \Poly^k(E)$. Then there exists a $k$-polylipschitz function $\tilde{\pi}\in\Poly^k(X)$ extending $\pi$ and satisfying $L_k(\tilde{\pi}) = L_k(\pi)$.
	
	More precisely, if $(\pi^j_0,\ldots, \pi^j_k)_{j}$ is a representation of $\pi$ and $\tilde{\pi}_l^j\in \LIP_\infty(X)$ is an extension of $\pi_l^j$ satisfying $L(\tilde{\pi}^j_l) = L(\pi^j_i)$ for each $j\in \N$ and $l=0,\ldots, k$, then the sequence  $(\tilde{\pi}^j_0,\ldots, \tilde{\pi}^j_k)_{j}\subset \LIP_\infty(X)^{k+1}$ represents a polylipschitz function $\tilde\pi\in \Poly^k(X)$ for which $L_k(\tilde\pi)=L_k(\pi)$.
\end{lemma}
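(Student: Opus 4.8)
The plan is to reduce the statement to the McShane extension theorem applied term-by-term and then invoke the universal property of the projective tensor product to assemble the pieces. First I would recall that, by Definition \ref{thm:represent} and the theorem preceding it, a sequence $(\pi^j_0,\ldots,\pi^j_k)_j$ in $\LIP_\infty(E)^{k+1}$ satisfies the summability condition (\ref{poly}) if and only if it represents a polylipschitz function in $\Poly^k(E)$; conversely every element of $\Poly^k(E)$ admits such a representation. So fix $\pi\in\Poly^k(E)$ and a representation $(\pi^j_0,\ldots,\pi^j_k)_j\in\Rep(\pi)$.

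Next I would apply the McShane extension theorem to each scalar Lipschitz function $\pi^j_l\in\LIP_\infty(E)$ individually, producing $\tilde\pi^j_l\in\LIP_\infty(X)$ with $\Lip(\tilde\pi^j_l)=\Lip(\pi^j_l)$ and $\|\tilde\pi^j_l\|_\infty=\|\pi^j_l\|_\infty$, hence $L(\tilde\pi^j_l)=L(\pi^j_l)$. (Here one uses the version of McShane that extends while preserving both the Lipschitz constant and the sup-norm; e.g. extend $\pi^j_l$ by the usual infimal-convolution formula and then truncate at $\pm\|\pi^j_l\|_\infty$, which does not increase the Lipschitz constant.) Summability (\ref{poly}) for the extended family is then immediate since $\sum_j L(\tilde\pi^j_0)\cdots L(\tilde\pi^j_k)=\sum_j L(\pi^j_0)\cdots L(\pi^j_k)<\infty$; by the converse part of the discussion after Definition \ref{thm:represent}, this family represents a polylipschitz function $\tilde\pi\in\Poly^k(X)$, and one has the norm bound $L_k(\tilde\pi)\le\sum_j L(\tilde\pi^j_0)\cdots L(\tilde\pi^j_k)=\sum_j L(\pi^j_0)\cdots L(\pi^j_k)$; taking the infimum over all representations of $\pi$ gives $L_k(\tilde\pi)\le L_k(\pi)$.

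It remains to see that $\tilde\pi$ genuinely extends $\pi$ (so that in particular $L_k(\tilde\pi)\ge L_k(\pi)$, completing the equality). For this I would use the identification, established in the paragraphs preceding the lemma, of $\Poly^k(X)$ with a subspace of $\LIP_\infty(X^{k+1})$ via the injective map $\overline\Phi_X$, and the analogous identification for $E$; concretely, $\tilde\pi$ and $\pi$ are honest functions on $X^{k+1}$ and $E^{k+1}$ given pointwise by the convergent series (\ref{polylip}). For $(x_0,\ldots,x_k)\in E^{k+1}$ we then have $\tilde\pi(x_0,\ldots,x_k)=\sum_j\tilde\pi^j_0(x_0)\cdots\tilde\pi^j_k(x_k)=\sum_j\pi^j_0(x_0)\cdots\pi^j_k(x_k)=\pi(x_0,\ldots,x_k)$, the middle equality because each $\tilde\pi^j_l$ agrees with $\pi^j_l$ on $E$. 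Finally, the restriction map $\Poly^k(X)\to\Poly^k(E)$ is norm-nonincreasing (it is induced by the norm-one restriction maps $\LIP_\infty(X)\to\LIP_\infty(E)$ together with the universal property of the projective tensor product), so $L_k(\pi)=L_k(\tilde\pi|_E)\le L_k(\tilde\pi)$, and combining with the earlier inequality yields $L_k(\tilde\pi)=L_k(\pi)$, together with the more precise second assertion of the lemma (the representation-level statement is exactly what was verified above).

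The only genuinely delicate point is the last one: making sure the series defining $\tilde\pi$ agrees with that defining $\pi$ on $E^{k+1}$, which hinges on the fact that the identifications of $\Poly^k(\cdot)$ as spaces of functions on the $(k+1)$-fold power are compatible with restriction — i.e. that the diagram relating $\overline\Phi_E$, $\overline\Phi_X$ and the restriction maps commutes. This is routine but should be spelled out, since everything else (McShane, summability, the norm inequalities) is immediate. No nontrivial obstruction is expected.
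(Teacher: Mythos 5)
Your overall route---term-by-term McShane extension (truncated to preserve the sup-norm), summability of the extended family, pointwise agreement on $E^{k+1}$ under the identification of $\Poly^k$ with functions on the $(k+1)$-fold power, and the lower bound $L_k(\pi)\le L_k(\tilde\pi)$ from the norm-nonincreasing restriction---is exactly the argument the paper has in mind: the lemma is recorded there as an immediate consequence of McShane extension, with no further proof given.

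There is, however, one step that does not work as written: you deduce $L_k(\tilde\pi)\le L_k(\pi)$ by ``taking the infimum over all representations of $\pi$''. The element $\tilde\pi$ was built from one \emph{fixed} representation; a different representation of $\pi$ produces, after extension, a possibly different element of $\Poly^k(X)$ (the two extensions agree on $E^{k+1}$ but in general not on $X^{k+1}$), so varying the representation does not improve the bound on your fixed $\tilde\pi$. All the argument gives is $L_k(\tilde\pi)\le\sum_j L(\pi^j_0)\cdots L(\pi^j_k)$ for the representation actually used. The failure is genuine rather than cosmetic: take $E=\{0,1\}\subset\R=X$ and $\pi=f\otimes 1+(-f)\otimes 1=0$ in $\Poly^1(E)$, where $f(0)=0$, $f(1)=1$; extending the two occurrences of $f$ by two \emph{different} norm-preserving McShane extensions $\tilde f_1\neq\tilde f_2$ yields $\tilde\pi=(\tilde f_1-\tilde f_2)\otimes 1\neq 0$, so $L_1(\tilde\pi)>0=L_1(\pi)$ even though every $L(\tilde\pi^j_l)=L(\pi^j_l)$. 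What the McShane argument really yields is: for every $\varepsilon>0$, a representation with $\sum_j L(\pi^j_0)\cdots L(\pi^j_k)\le L_k(\pi)+\varepsilon$ extends to some $\tilde\pi$ with $L_k(\tilde\pi)\le L_k(\pi)+\varepsilon$; equivalently, the restriction $\Poly^k(X)\to\Poly^k(E)$ is a metric surjection, being a projective tensor product of quotient maps (cf.\ the paper's use of \cite[Proposition 2.5]{rya02}). The exact equality for an \emph{arbitrary} representation and arbitrary norm-preserving extensions, which is what you (and, read literally, the lemma) assert, cannot be obtained this way, and your infimum step silently conflates the two situations; to close the gap you should either work with near-optimal representations or state the conclusion as the inequality your construction actually provides.
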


Thus polylipschitz functions defined on a subset $E\subset X$ can always be extended to polylipschitz functions on $X$ preserving the polylipschitz norm.

\subsection{Homogeneous polylipschitz functions}\label{sec:hpoly}
\begin{definition}
A \emph{homogeneous $k$-polylipschitz function} is an element in $$\displaystyle \overline\Poly^k(X):=\LIP_\infty(X)\hat\otimes_\pi\overline \LIP_\infty(X)\hat\otimes_\pi\cdots\hat\otimes_\pi\overline \LIP_\infty(X),$$ where $\overline\LIP_\infty(X)$ appears $k$ times in the tensor product. We denote by $\overline L_k(\cdot)$ the projective tensor norm on $\overline\Poly^k(X)$.
\end{definition}

Denote by 
\begin{equation}\label{eq:barjmath}
\bar\jmath=\bar\jmath_X^k:\LIP_\infty(X)^{k+1}\to \overline\Poly^k(X),\quad \bar\jmath:=\jmath\circ Q
\end{equation}
the natural bounded $(k+1)$-linear map.

The natural quotient map $q:\LIP_\infty(X)\to \overline\LIP_\infty(X)$ induces a quotient map
\begin{equation}\label{eq:quotient}
Q=Q_X^k:=\id\otimes_\pi q\otimes_\pi\cdots\otimes_\pi q:\Poly^k(X)\to \overline\Poly^k(X),
\end{equation}
that is, $\overline q:\Poly^k(X)\to\overline\Poly^k(X)$ is an open surjection and
\[
\bar L_k(\bar\pi)=\inf\{ L_k(\pi):\ Q(\pi)=\bar\pi \}\quad \text{for each }\bar\pi\in\overline\Poly^k(X),
\]
cf. \cite[Proposition 2.5]{rya02}. 

\subsection{Convergence of polylipschitz and homogeneous polylipschitz functions} 

In this subsection, we define a notion of convergence of sequences on $\Poly^k(X)$ and $\overline\Poly^k(X)$. These notions correspond to the weak$-^*$ convergence in $\LIP_\infty(X)$; see Section 2.1. We give the necessary notions of convergence in two separate definitions. 

\begin{definition}\label{conv}
A sequence $(\pi^n)$ in $ \Poly^k(X)$ converges to $\pi\in\Poly^k(X)$ if, for all compact sets $V:=V_0\times \cdots\times V_k\subset X^{k+1}$ there exists representations 
	\[
	(\pi_0^{n,j},\ldots,\pi_k^{j,n})\in \Rep(\pi^n-\pi)
	\]
	for which
	\begin{itemize}
		\item[(1)] $\displaystyle \sum_j^\infty\sup_nL(\pi_0^{j,n})\cdots L(\pi_k^{j,n})<\infty$, and
		\item[(2)] $\displaystyle \lim_{n\to\infty}\sum_j^\infty\|\pi_0^{j,n}|_{V_0}\|_\infty\cdots \|\pi_k^{j,n}|_{V_k}\|_\infty= 0.$
	\end{itemize}
\end{definition}

\begin{definition}\label{hconv}
A sequence $(\bar\pi^n)$ in $\overline\Poly^k(X)$  converges to $\bar\pi\in\overline\Poly^k(X)$ if there are polylipschitz functions $\pi\in Q\inv(\bar\pi)$ and $\pi^n\in Q\inv(\bar\pi^n)$ for each $n\in\N$, so that $\pi^n\to\pi$ in $\Poly^k(X)$.
\end{definition}
\begin{remark}
It follows immediately from Definitions \ref{conv} and \ref{hconv} that the natural maps $\jmath:\LIP_\infty(X)^{k+1}\to \Poly^k(X)$ and $\bar\jmath:\LIP_\infty(X)^{k+1}\to \overline\Poly^k(X)$ in (\ref{eq:jmath}) and (\ref{eq:barjmath}) are sequentially continuous.
\end{remark}

Before moving to polylipschitz forms, we record a notion of locality for linear maps in $\LIP_\infty(X)^{k+1}$ and record some of its consequences. 

\begin{definition}
\label{def:locality}
A $(k+1)$-linear map $A:\LIP_\infty(X)^{k+1}\to V$ is \emph{local} if, for a $(k+1)$-tuple $(\pi_0,\ldots, \pi_k)\in \LIP_\infty(X)^{k+1}$, holds  
\begin{equation}\label{eq:locality}
A(\pi_0,\ldots,,\pi_k)=0
\end{equation}
whenever one of the functions $\pi_1,\ldots, \pi_k$ is constant.
\end{definition}
 
\begin{proposition}\label{prop:useful}
Let $V$ be a Banach space and let $A:\LIP_\infty(X)^{k+1}\to V$ be a bounded and local $(k+1)$-linear map. Then 
\begin{enumerate}
\item $A$ descends to a (unique) bounded $(k+1)$-linear map $A':\LIP_\infty(X)\times\overline\LIP_\infty(X)^k\to V$,
\item the unique bounded linear maps $\overline A:\Poly^k(X)\to V$ and $\overline{A'}:\overline\Poly^k(X)\to V$ satisfying $A = \overline A \circ \jmath$ and $A' = \overline{A'} \circ (Q\circ \jmath)$, respectively, satisfy $\overline{A} = \overline{A'}\circ Q$, and
\item if $\overline A$ is sequentially continuous (in the sense of Definition \ref{conv}) then $\overline{A'}$ sequentially continuous (in the sense of Definition \ref{hconv}).
\end{enumerate}
\end{proposition}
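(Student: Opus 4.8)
The plan is to prove the three assertions in sequence, each building on the previous one, using the universal property of the projective tensor product as the central tool.

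For assertion (1), I would start from the locality hypothesis. Fix $l \in \{1,\ldots,k\}$ and note that for any $\pi_0,\ldots,\pi_k \in \LIP_\infty(X)$ and any constant $c \in \R$, multilinearity and locality give
\begin{equation*}
A(\pi_0,\ldots,\pi_l,\ldots,\pi_k) = A(\pi_0,\ldots,\pi_l - c,\ldots,\pi_k) + A(\pi_0,\ldots,c,\ldots,\pi_k) = A(\pi_0,\ldots,\pi_l - c,\ldots,\pi_k),
\end{equation*}
since the second term vanishes by locality (the constant function $c$ sits in slot $l > 0$). Hence $A(\pi_0,\ldots,\pi_k)$ depends only on the equivalence classes $\bar\pi_1,\ldots,\bar\pi_k$, and the formula $A'(\pi_0,\bar\pi_1,\ldots,\bar\pi_k) := A(\pi_0,\pi_1,\ldots,\pi_k)$ is a well-defined $(k+1)$-linear map on $\LIP_\infty(X)\times\overline\LIP_\infty(X)^k$. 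Boundedness follows because $\|A\| \cdot L(\pi_0)\prod_l L(\pi_l - c_l)$ bounds $|A(\pi_0,\ldots,\pi_k)|$ for every choice of constants $c_l$, so taking infima over the $c_l$ yields $|A'(\pi_0,\bar\pi_1,\ldots,\bar\pi_k)| \le \|A\|\, L(\pi_0)\prod_l \bar L(\bar\pi_l)$. Uniqueness is immediate since $q$ is surjective.

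For assertion (2), the maps $\overline A$ and $\overline{A'}$ exist and are unique by the universal property of the projective tensor product applied to the bounded multilinear maps $A$ and $A'$ (diagram \eqref{diagram:proj}). To see $\overline A = \overline{A'}\circ Q$, observe that both sides are bounded linear maps $\Poly^k(X) \to V$, and on the dense subspace spanned by elementary tensors $\pi_0\otimes\cdots\otimes\pi_k = \jmath(\pi_0,\ldots,\pi_k)$ we compute
\begin{equation*}
(\overline{A'}\circ Q)(\pi_0\otimes\cdots\otimes\pi_k) = \overline{A'}(\pi_0\otimes\bar\pi_1\otimes\cdots\otimes\bar\pi_k) = A'(\pi_0,\bar\pi_1,\ldots,\bar\pi_k) = A(\pi_0,\ldots,\pi_k) = \overline A(\pi_0\otimes\cdots\otimes\pi_k),
\end{equation*}
using the definition of $Q = \id\otimes_\pi q\otimes_\pi\cdots\otimes_\pi q$ together with $A' = \overline{A'}\circ(Q\circ\jmath)$ and $A = \overline A\circ\jmath$. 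Since both maps are continuous and agree on a dense set, they coincide.

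For assertion (3), suppose $\bar\pi^n \to \bar\pi$ in $\overline\Poly^k(X)$ in the sense of Definition \ref{hconv}. By definition there exist lifts $\pi^n \in Q^{-1}(\bar\pi^n)$ and $\pi \in Q^{-1}(\bar\pi)$ with $\pi^n \to \pi$ in $\Poly^k(X)$. Then $\overline{A'}(\bar\pi^n) = \overline{A'}(Q\pi^n) = \overline A(\pi^n)$ by assertion (2), and likewise $\overline{A'}(\bar\pi) = \overline A(\pi)$. Sequential continuity of $\overline A$ gives $\overline A(\pi^n) \to \overline A(\pi)$, hence $\overline{A'}(\bar\pi^n) \to \overline{A'}(\bar\pi)$, which is exactly sequential continuity of $\overline{A'}$. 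I expect the only genuine subtlety to be in assertion (1)—correctly extracting well-definedness and the sharp norm bound on the quotient from locality and multilinearity—while (2) and (3) are essentially formal diagram chases once the universal property is invoked; in particular (3) is nearly immediate because Definition \ref{hconv} was tailored to make lifts available.
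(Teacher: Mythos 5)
Your proposal is correct and follows essentially the same route as the paper: locality plus multilinearity gives the descent to the quotient in (1), the universal property of the projective tensor product (equivalently, density of elementary tensors plus boundedness) gives $\overline A=\overline{A'}\circ Q$ in (2), and lifting convergent sequences via Definition \ref{hconv} gives (3). Your version merely spells out the "clear" descent step and handles general limits in (3) directly instead of reducing to $0$ by linearity, which are cosmetic differences.
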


\begin{proof}
It is clear that, since $A$ is local, it descends to a unique bounded multilinear map $A':\LIP_\infty(X)\times\overline\LIP_\infty(X)^k\to V$ satisfying $A=A'\circ(\id\times q_X\times\cdots\times q_X)$. Note that
\[
Q=\overline{\id\times q_X\times\cdots\times q_X}.
\]
It follows from the uniqueness of the diagram (\ref{diagram:proj}) that
\[
\overline A=\overline{A'\circ(\id\times q_X\times\cdots\times q_X)}=\overline{A'}\circ Q.
\]

Suppose $\overline{A}$ is sequentially continuous and let $\bar\pi^n\to 0$ in $\overline\Poly^k(X)$. For each $n\in\N$, we fix $\pi^n\in\Poly^k(X)$ so that $Q(\pi^n)=\bar\pi^n$ and that the sequence $(\pi^n)$ converges to $0\in Q^{-1}(0)$ in $\Poly^k(X)$. Then $\overline{A}(\pi^n)\in Q\inv(\overline{A'}(\bar\pi^n))$ and $\overline{A}(\pi^n)\to 0$. Thus $\overline{A'}(\bar\pi^n)\to 0$ in $\overline\Poly^k(X)$. Since $\overline{A'}$ is linear this proves the sequential continuity of $\overline{A'}$.
\end{proof}

\section{Polylipschitz forms and sections}\label{sec:polyform}
Since we consider presheaves of polylipschitz functions and homogeneous polylipschitz functions and their \'etal\'e spaces, we discuss the related terminology first in more general. We refer to \cite[Section 5.6]{war83} and \cite[Chapter II]{wel80} for a more detailed discussion.

\subsection{Presheaves and \'etal\'e spaces} 

Let $X$ be a paracompact Hausdorff space. A \emph{presheaf} $P$ on $X$ is a collection $\{A(U)\}_U$ of vector spaces (over $\R$) for each open set $U\subset X$ and, for each inclusion $U\subset V$, a linear map $\rho_{U,V}:A(V)\to A(U)$ satisfying $\rho_{U,U}=\id$ and
\begin{equation}\label{eq:restrcommute}
\rho_{U,V}=\rho_{U,W}\circ\rho_{W,V}
\end{equation}
whenever $U\subset W\subset V$.

Given two presheaves $\{A(U) \}$ and $\{B(U)\}$ on $X$, a collection $$\{ \varphi_U:B(U)\to A(U) \}$$ of linear maps satisfying 
\begin{equation}\label{eq:idhomom}
\varphi_U\circ\rho^B_{U,V}=\rho_{U,V}^A\circ\varphi_V,\quad U\subset V
\end{equation}
is called a \emph{presheaf homomorphism}.

Given an open set $U\subset X$, the \emph{support} of $f\in A(U)$, denoted $\spt(f)$, is the intersection of all closed sets $F\subset U$ with the property that $\displaystyle \rho_{U\setminus F,U}(f)=0$.

\subsubsection*{Fine presheaves} A presheaf $\{ A(U) \}$ on $X$ is called \emph{fine} if every open cover of $X$ admits a locally finite open refinement $\cU$ and, for each $U\in \cU$, there is a presheaf homomorphism 
\[
\{ (L_U)_W:A(W)\to A(W) \}
\]
with the following properties:
\begin{itemize}
	\item[(a)] $\spt (L_U)_W(f)\subset U\cap W$ for every $f\in A(W)$ and $U\in\cU$,
	\item[(b)] every point $x\in X$ has a neighborhood $D\subset X$ for which $D\cap U\ne\varnothing$ for only finitely many $U\in\cU$ and 
	$$\sum_{U\in\cU}\rho_{D,W}\circ (L_U)_W=\rho_{D,W}.$$
	whenever $D\subset W$.
\end{itemize}
Note that by (a) and the assumption on $D$, the sum in (b) has only finitely many non-zero terms.

\subsubsection*{Space of germs and its sections} 
Let $x\in X$ and $U$  and $V$ be open neighborhoods of $x$. Two elements $f\in A(U)$ and $g\in A(V)$ are equivalent if there exists an open neighborhood $D\subset U\cap V$ of $x$ so that
\[
\rho_{D,U}(f)=\rho_{D,V}(g).
\]
This defines an equivalence relation on the disjoint union $\displaystyle \bigsqcup_UA(U)$. We denote by $\mathcal A(X)$ the set of equivalence classes and say that $\mathcal A(X)$ is the space of germs for the presheaf $P$.

Given $x\in X$, an open neighborhood $U$ of $x$ and $f\in A(U)$ we denote by $[f]_x\in \mathcal A(X)$ the equivalence class of $f$ and call it the \emph{germ of $f$ at $x$}. There is a natural projection map
\begin{equation}\label{eq:proj}
p:\mathcal A(X)\to X,\quad [f]_x\mapsto x
\end{equation}
and the fibers $p\inv(x)=:\mathcal A_x(X)$ are called \emph{stalks of $\mathcal A(X)$ over $x$.} The stalk $\mathcal A_x(X)$ has a natural addition and scalar multiplication, making it a vector space (see \cite[Section 5.6]{war83}).

If $U\subset X$ is an open set, a map $\omega:U\mapsto \mathcal A(U)$ satisfying
\[
p\circ \omega=\id
\]
is called a \emph{section of $\mathcal A(U)$ over $U$} and the space of all sections of $\mathcal A(U)$ over $U$ is denoted by $\G(U;\mathcal A(U)).$ Note that $\G(U;\mathcal A(U))$ has a natural vector space structure given by pointwise addition and scalar multiplication. We abbreviate $\G(\mathcal A(X))=\G(X;\mathcal A(X))$ and call elements of $\G(\mathcal A(X))$ \emph{global sections of $\mathcal A(X)$}.

\subsubsection*{\'Etal\'e space} There is a natural \emph{\'etal\'e topology} on $\mathcal A(X)$ so that the projection map (\ref{eq:proj})  is a local homeomorphism.

The \'etal\'e topology has a basis of open sets of the form
\[
O_{U,f}=\{ [f]_x: x\in U \}
\]
for $U\subset X$ open and $f\in A(U)$, cf. \cite[Section 5.6]{war83}. We call $\mathcal A(X)$ equipped with this topology the \emph{\'etal\'e space} associated to the presheaf $P$.

If $\omega\in\G(\mathcal A(X))$ and $\cU$ is an open cover of $X$, a collection $\{ f_U\in A(U) \}_\cU$ is called  \emph{compatible with $\omega$} if, for every $x\in X$, there exists $U\in\mathcal U$ so that $\omega(x)=[f_U]_x$. Note that 
\[
\omega\inv O_{U,f_U}=\{ x\in U: \omega(x)=[f_U]_x \}\subset U
\]
forms a cover of $X$ which is a refinement of $\cU$. When $\omega$ is continuous, the open cover $\mathcal V=\{ \omega\inv O_{U,f_U} \}_\cU$ and the collection $g_V:=\rho_{V,U}(f_U)$, where $V=\omega\inv O_{U,f_U}$, is compatible with $\omega$ and furthermore
\begin{equation}\label{eq:represent}
[g_V]_x=\omega(x)=[g_W]_x \textrm{ whenever $V,W\in \mathcal V$ and $x\in V\cap W$.}
\end{equation}
We say that a collection $\{ f_U \}_\cU$ \emph{represents} a continuous section $\omega\in\G(\mathcal A(X))$, if it satisfies  (\ref{eq:represent}).

Fine presheaves with a mild additional assumption admit a stronger form of (\ref{eq:represent}), called the \emph{overlap condition}, for collections representing continuous sections, which we record as the following lemma. This will be used in Section \ref{sec:currentdual} to define the action of a current on polylipschitz forms.

\begin{lemma}\label{lem:overlap}
    Let $\{A(U)\}$ is a fine presheaf and suppose that the linear maps $\rho_{U,X}$ are onto for each open $U\subset X$.
    
	If $\omega\in\G(\mathcal A(X))$ is continuous and the collection $\{ f_U\in A(U) \}_\cU$ is compatible with $\omega$, there exists a locally finite refinement $\mathcal V$ of $\cU$ and a collection $\{ g_V\in A(V) \}_\mathcal V$ satisfying the \emph{overlap condition}
	\begin{equation}\label{overlap}
	\rho_{V\cap W,V}(f_V)= \rho_{V\cap W,W}(f_W),\ \textrm{ whenever } V,W\in \mathcal V\ \textrm{  and } V\cap W\neq \varnothing.
	\end{equation}
\end{lemma}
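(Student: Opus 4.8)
The plan is to start from the representation $\{f_U\}_{\cU}$ of $\omega$ and upgrade it to a collection satisfying the overlap condition by exploiting fineness. First I would invoke the fine presheaf structure: the open cover $\cU$ (or rather the refinement $\mathcal V_0 = \{\omega^{-1}O_{U,f_U}\}_\cU$ produced after equation~\eqref{eq:represent}, which already represents $\omega$ and is a refinement of $\cU$) admits a locally finite open refinement $\cU' = \{W_\alpha\}$ together with presheaf endomorphisms $(L_{W_\alpha})$ satisfying (a) and (b). I may assume after passing to this refinement that we have a locally finite cover $\{V_\alpha\}$ and representatives $\{g_\alpha \in A(V_\alpha)\}$ with $[g_\alpha]_x = \omega(x)$ for $x \in V_\alpha$, and that each $V_\alpha$ is contained in some member of $\cU$. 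Using the hypothesis that $\rho_{V,X}$ is onto, I can further choose \emph{global} representatives: pick $\tilde g_\alpha \in A(X)$ with $\rho_{V_\alpha,X}(\tilde g_\alpha) = g_\alpha$, so the germ data is now carried by global sections, and $[\tilde g_\alpha]_x = \omega(x)$ whenever $x \in V_\alpha$.

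The key construction is then to glue these global representatives using a partition-of-unity-type argument coming from the fine structure. Set $h = \sum_\alpha (L_{V_\alpha})_X(\tilde g_\alpha) \in A(X)$; by property (b) applied locally this is a locally finite sum, so it is well-defined as a global section (more precisely, near each point $x$ only finitely many terms are nonzero, so $\rho_{D,X}(h)$ makes sense for a suitable neighborhood $D$ of $x$). The central claim is that $[h]_x = \omega(x)$ for every $x \in X$. To see this, fix $x$ and a neighborhood $D \ni x$ meeting only finitely many $V_\alpha$, say $V_{\alpha_1},\dots,V_{\alpha_m}$, and among these only those with $x \in V_{\alpha_i}$ contribute germs equal to $\omega(x)$; but property (a) forces $\spt (L_{V_{\alpha_i}})_X(\tilde g_{\alpha_i}) \subset V_{\alpha_i}$, so the terms with $x \notin V_{\alpha_i}$ vanish in a neighborhood of $x$, and on the remaining terms all the $[\tilde g_{\alpha_i}]_x$ agree with $\omega(x)$. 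Combining this with $\sum_\alpha \rho_{D,X}\circ(L_{V_\alpha})_X = \rho_{D,X}$ from (b), I get $[h]_x = [\sum \tilde g_{\alpha_i}|_{V_{\alpha_i}\ni x} (L_{V_{\alpha}})_X(\tilde g_\alpha)]_x = \omega(x)$, after noting that the partition-of-unity identity lets the coefficients telescope onto the single germ $\omega(x)$. Then I set $g_V := \rho_{V,X}(h)$ for every $V$ in the refinement $\mathcal V := \mathcal V_0$ (or $\cU'$); since these are all restrictions of the \emph{same} global section $h$, the overlap condition~\eqref{overlap} is automatic from the compatibility~\eqref{eq:restrcommute} of the restriction maps: $\rho_{V\cap W,V}(g_V) = \rho_{V\cap W,X}(h) = \rho_{V\cap W,W}(g_W)$.

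I expect the main obstacle to be the bookkeeping in the claim $[h]_x = \omega(x)$ — specifically, making precise the interplay between the local finiteness in (b), the support condition in (a), and the fact that the germ $[\tilde g_\alpha]_x$ only equals $\omega(x)$ when $x \in V_\alpha$ rather than merely $x$ in the (larger) domain where $(L_{V_\alpha})_X(\tilde g_\alpha)$ is supported. One must be careful that the operator $(L_{V_\alpha})_X$ is applied to a global section whose germ at every point of $V_\alpha$ is already $\omega$, so the localization does not spoil the germ; this is where the choice of global representatives (using surjectivity of $\rho_{V,X}$) is essential, since it guarantees $(L_{V_\alpha})_X$ can be applied to an honest element of $A(X)$ and the presheaf-homomorphism property~\eqref{eq:idhomom} transports germ identities correctly. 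The rest — local finiteness of $\mathcal V$, that it refines $\cU$, and the overlap identity — is then immediate from the construction.
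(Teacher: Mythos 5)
Your construction is, at its core, the same as the paper's: lift local representatives to global elements of $A(X)$ using the surjectivity of $\rho_{\,\cdot\,,X}$, apply the fine-structure operators $(L_{V_\alpha})$ as a partition of unity, observe that the resulting local data are restrictions of a common object so that the overlap condition comes for free, and verify compatibility with $\omega$ from (a), (b) and the presheaf-homomorphism property. Your germ-level verification that the glued object has germ $\omega(x)$ at every $x$ is sound and is equivalent to the paper's restriction-level computation $g_V=\rho_{V,U}(f_U)$; as you note, the global lifts are exactly what makes (b) applicable to a single element.

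The step that does not survive as written is the definition of $h$ and the choice of the final cover. A presheaf has no infinite sums, so $h=\sum_\alpha (L_{V_\alpha})_X(\tilde g_\alpha)$ is not an element of $A(X)$; your parenthetical remark concedes this, but then $g_V:=\rho_{V,X}(h)$ only makes sense for $V$ contained in a neighborhood on which all but finitely many summands restrict to zero. Neither of your proposed covers has this property: $\mathcal V_0=\{\omega^{-1}O_{U,f_U}\}_{U\in\mathcal U}$ need not be locally finite at all (and the lemma requires $\mathcal V$ locally finite), and even a single member of the locally finite refinement $\mathcal U'$ may intersect infinitely many of the sets $V_\alpha$, so the restriction of the sum to it is still an infinite sum. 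The repair is exactly what the paper does: for each $x$ fix a neighborhood $D_x$ as in (b), shrunk to lie inside some $\omega^{-1}O_{U,f_U}$, define the finite sums $g_x:=\sum_\alpha \rho_{D_x,X}\bigl((L_{V_\alpha})_X(\tilde g_\alpha)\bigr)\in A(D_x)$, check the overlap condition for $\{g_x\}$ directly (the same computation, since each $g_x$ is the restriction of ``the same'' sum), and then take $\mathcal V$ to be a locally finite refinement of $\{D_x\}_{x\in X}$ with $g_V:=\rho_{V,D_x}(g_x)$. With that modification your argument coincides with the paper's proof.
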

\begin{proof}
	The sets $ \omega\inv O_{U,f_U}\subset X$ ($U\in\cU$) are open and, since $\{ f_U \}_\cU$ is compatible with $\omega$, cover $X$. Let $\mathcal W$ be a locally finite refinement of $\{ \omega\inv O_{U,f_U} \}_{U\in\cU}$ and $\{(L_W)_U:A(U)\to A(U)\}_U$ ($W\in\mathcal W$) be as in the definition of fine sheaves. We denote $L_W:=(L_W)_X$. For every $W\in \mathcal W$ choose $U\in\cU$ such that $W\subset \omega\inv O_{U,f_U}\subset U$ and let $h_W\in A(X)$ be such that $$\rho_{W,X}(h_W)=\rho_{W,U}(f_U).$$
	
	For each $x\in X$, let $D_x$ be a neighborhood of $x$ satisfying (b) in the same definition. Set
	\[ g_x:=\sum_{W\in\mathcal W}\rho_{D_x,X}\circ L_W(h_W)\in A(D_x). \]

	The collection $\{ g_x\in A(D_x) \}_{x\in X}$ now satisfies (\ref{overlap}). Indeed,
	\begin{align*}
	\rho_{D_x\cap D_y,D_x}(g_x)&=\sum_{W\in \mathcal W}\rho_{D_x\cap D_y,X}(L_W(h_W))\\
	&=\sum_{W\in\mathcal W} \rho_{D_x\cap D_y,D_y}(\rho_{D_y,X}\circ L_W(h_W))\\
	&=\rho_{D_x\cap D_y,D_y}(g_y). 
	\end{align*}
	We pass to a locally finite refinement $\mathcal V$ of $\{ D_x\}_{x\in X}$ and set, for any $V\in\mathcal V$,
	\[
	g_V=\rho_{V,D_x}(g_x)
	\]
	whenever $V\subset D_x$. Clearly
	\begin{align*}
	\rho_{U\cap V,U}(g_U)=& \rho_{U\cap V,D_x\cap D_y}(\rho_{D_x\cap D_y,D_x}(g_x))\\
	=&\rho_{U\cap V,D_x\cap D_y}(\rho_{D_x\cap D_y,D_y}(g_y))=\rho_{U\cap V,V}(g_V)
	\end{align*}
	whenever $U,V\in \mathcal V$ and $U\subset D_x$, $V\subset D_y$.
	
	It remains to show that  $\{g_V\}_\mathcal V$ is compatible with $\omega$. Indeed, for $x\in X$ and $V\in \mathcal V$ a neighborhood of $x$, we have $V\subset D_y\subset \omega\inv O_{U,f_U}\subset U$ for some $D_y$ and $U\in \mathcal U$. Since
	\begin{align*}
 g_V&=\sum_{W\in\mathcal W}\rho_{V,X}(L_W(h_W))=\sum_{W\in\mathcal W}(L_W)_V(\rho_{V,X}(h_W))\\
 &=\sum_{W\in\mathcal W}(L_W)_V(\rho_{V,U}(f_U)))=\rho_{V,U}(f_U),
	\end{align*}
	it follows that $[g_V]_x=[\rho_{V,U}(f_U)]_x=[f_U]_x=\omega(x)$.
\end{proof}
\begin{definition}\label{def:overlap-compatible}
	Let $\omega\in\G(\mathcal A(X))$ be continuous. If $\cU$ is a locally finite open cover, $\{ f_U \}_\cU$ is compatible with $\omega$ and satisfies the overlap condition (\ref{overlap}), we say that $\{f_U \}_\cU$ is \emph{overlap-compatible} with $\omega$.
\end{definition}
\begin{remark}\label{rmk:nicecover}
	Representations $\{ f_U \}_\cU$ of continuous sections $\omega\in\G(\mathcal A(X))$ are stable under passing to refinements. Indeed, if $\mathcal D$ is a refinement of $\mathcal U$ and we set 
	\[
	f_V'=\rho_{V,W}(f_W),
	\]
	for $ V\in \mathcal D$ and $ V\subset W\in \mathcal U,$ the collection $\{f_V'\}_{V\in\mathcal D}$ again represents $\omega$. The same holds true for the overlap condition (\ref{overlap}).
	
	Thus we may always assume that the underlying cover in a representation of $\omega$ is locally finite consists of precompact sets if $X$ is locally compact.
\end{remark}

The vector space of continuous sections over $\mathcal A(X)$ is denoted $\Gamma(\mathcal A(X))$. We remark that there is a canonical linear map
\begin{equation}\label{eq:gamma}
\gamma:A(X)\to \Gamma(\mathcal A(X)),\quad f\mapsto (x\mapsto [f]_x).
\end{equation}

\subsubsection*{Support} Let $\{ A(U);\rho_{U,V} \}_U$ be a presheaf on $X$ and $\mathcal A(X)$ the associated \'etal\'e space. For $\omega\in \G(\mathcal A(X))$, we define 
\[ 
\spt\omega = \overline{\{x\in X: \omega(x)\ne 0 \}}. 
\] 
We say that the section $\omega\in \G(\mathcal A(X))$ has \emph{compact support} if $\spt\omega$ is compact. We denote by $\G_c(\mathcal A(X))$ the vector space of compactly supported (global) sections of $\mathcal A(X)$ and $\Gamma_c(\mathcal A(X))=\Gamma(\mathcal A(X))\cap \G_c(\mathcal A(X))$.

\subsection{Polylipschitz forms and sections}\label{sec:germs}
We move now the discussion from abstract presheaves to presheaves of polylipschitz and homogeneous polylipschitz functions. Let $X$ be a locally compact  metric space and $k\in\N$. Recall the notation introduced in Section \ref{sec:hlip}. We consider two presheaves, namely the collections $\{ \Poly^k(U) \}_U$ and $\{\overline\Poly^k(U)\}_U$ together with the restriction maps
\begin{align*}
&\rho_{U,V}:\Poly^k(V)\to \Poly^k(U),\quad \rho_{U,V}=r_{U,V}^{\otimes_\pi(k+1)},\\
&\bar\rho_{V,U}:\overline\Poly^k(V)\to \overline\Poly^k(U),\quad \bar\rho_{U,V}:=r_{U,V}\otimes_\pi\bar r_{U,V}^{\otimes_\pi k}
\end{align*}
for inclusions $U\subset V$. Note that under the identification described in Section \ref{sec:polylipfns} the map $\rho_{U,V}$ is simply the restriction map $\pi\mapsto \pi|_{U^{k+1}}$. It is not difficult to see (using the corresponding facts for $r_{U,V}$ and $\bar r_{U,V}$) that $\rho_{U,V}$ and $\bar \rho_{U,V}$ satisfy (\ref{eq:restrcommute}) for $U\subset W\subset V$. 
For the purposes of Section \ref{sec:aux}, we note that this property remains true for the quotient maps $\rho_{U,V}$ and $\bar\rho_{U,V}$ for \emph{any} sets $U\subset V\subset X$, in particular also for sets which are not open.

The overlap condition \eqref{overlap} for polylipschitz forms and sections is crucial for defining the action of currents on them. The next proposition establishes this by showing that the presheaves $\{ \Poly^k(U) \}$ and $\{ \overline\Poly^k(U) \}$ are fine.

\begin{proposition}\label{prop:polyfine}
	The presheaves $\{ \Poly^k(U) \}$ and $\{ \overline\Poly^k(U) \}$ are fine, and the maps $\rho_{U,X}$ and $\bar\rho_{U,X}$ are onto.
\end{proposition}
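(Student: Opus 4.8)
The plan is to verify the two fineness axioms directly from the definitions, using a Lipschitz partition of unity on $X$ and the tensorial structure of $\Poly^k$ and $\overline\Poly^k$. For the surjectivity of $\rho_{U,X}$ and $\bar\rho_{U,X}$, this is immediate from Lemma~\ref{ext}: given $\pi\in\Poly^k(U)$, extend a representation $(\pi_0^j,\dots,\pi_k^j)_j\in\Rep(\pi)$ factorwise by McShane extensions with $L(\tilde\pi_l^j)=L(\pi_l^j)$, so that $(\tilde\pi_0^j,\dots,\tilde\pi_k^j)_j$ still satisfies the summability~\eqref{poly} and hence represents some $\tilde\pi\in\Poly^k(X)$ with $\rho_{U,X}(\tilde\pi)=\pi$; for $\overline\Poly^k$ one applies $Q_X^k$ to $\tilde\pi$ and uses $\bar\rho_{U,X}\circ Q_X^k=Q_U^k\circ\rho_{U,X}$, which follows from \eqref{eq:restrcomm} at the tensor level.

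For fineness, given an open cover of $X$, use local compactness and paracompactness to extract a locally finite open refinement $\mathcal U=\{U_i\}$ by precompact sets, and choose a Lipschitz partition of unity $\{\varphi_i\}$ subordinate to $\mathcal U$ with $\spt\varphi_i\subset U_i$, $\sum_i\varphi_i\equiv 1$ locally, and each $\varphi_i\in\LIP_\infty(X)$ (subordination and local finiteness give that each point has a neighborhood meeting only finitely many $U_i$). For each $i$ define the presheaf homomorphism $(L_{U_i})_W:\Poly^k(W)\to\Poly^k(W)$ to be multiplication in the first tensor slot by $\varphi_i|_W$, i.e.\ $(L_{U_i})_W=m_{\varphi_i|_W}\otimes_\pi\id\otimes_\pi\cdots\otimes_\pi\id$, where $m_g:\LIP_\infty(W)\to\LIP_\infty(W)$ is $f\mapsto gf$; this is bounded since $L(gf)\lesssim L(g)L(f)$, extends to the projective tensor product by the universal property \eqref{diagram:proj}, and commutes with restrictions $\rho_{U,V}$ because multiplication commutes with restriction of functions. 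Property~(a) holds because on a representation $\rho_{W\setminus\spt\varphi_i,W}$ kills $\varphi_i$ in the first slot, so $\spt(L_{U_i})_W(\pi)\subset\spt\varphi_i\cap W\subset U_i\cap W$. Property~(b) follows from $\sum_i\varphi_i\equiv 1$ on a neighborhood $D$ of any point: on the first tensor slot $\sum_i m_{\varphi_i|_D}=\id$, hence $\sum_i\rho_{D,W}\circ(L_{U_i})_W=\rho_{D,W}$, the sum being finite by local finiteness. For $\overline\Poly^k$, define $(\bar L_{U_i})_W$ to be multiplication by $\varphi_i$ still in the \emph{first} (non-homogeneous) slot — crucially this slot is $\LIP_\infty$, not $\overline\LIP_\infty$, so multiplication is well-defined — and note $(\bar L_{U_i})_W\circ Q_W^k=Q_W^k\circ(L_{U_i})_W$, which transports fineness of $\{\Poly^k(U)\}$ to $\{\overline\Poly^k(U)\}$ via the quotient maps, together with surjectivity of $Q_W^k$.

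The main obstacle is a bookkeeping issue rather than a conceptual one: one must check that the multiplication maps $m_g$ genuinely extend to the projective completions and that the resulting $(L_{U_i})_W$ form a \emph{presheaf homomorphism} in the sense of~\eqref{eq:idhomom}, i.e.\ that they commute with the tensorized restriction maps $\rho_{U,V}=r_{U,V}^{\otimes_\pi(k+1)}$ and $\bar\rho_{U,V}=r_{U,V}\otimes_\pi\bar r_{U,V}^{\otimes_\pi k}$ — this is where one needs $\varphi_i$ to sit in the non-homogeneous slot, since multiplying a class in $\overline\LIP_\infty$ by $\varphi_i$ is not well-defined. Once the first slot is identified as the correct place for the partition of unity, everything reduces to the elementary facts that multiplication by a fixed bounded Lipschitz function is bounded on $\LIP_\infty$ and commutes with restriction, and that these properties pass to projective tensor products by functoriality of $\hat\otimes_\pi$.
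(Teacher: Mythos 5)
Your proposal is correct and follows essentially the same route as the paper: a Lipschitz partition of unity subordinate to a locally finite precompact refinement, with the presheaf homomorphisms given by multiplication by $\varphi_U$ in the first (non-homogeneous) tensor slot, and the homogeneous case obtained by descending through the quotient maps $Q_W^k$ (the paper phrases this via Proposition \ref{prop:useful} rather than writing the operator as $m_{\varphi_U}\otimes_\pi\id\otimes_\pi\cdots\otimes_\pi\id$, but it is the same map). Your surjectivity argument via Lemma \ref{ext} is likewise just an explicit version of the paper's remark that the restriction maps are quotient maps.
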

\begin{proof}
	The last claim is immediate since $\rho_{U,X}$ and $\bar\rho_{U,X}$ are quotient maps. Since $X$ is locally compact, any open cover of $X$ admits a locally finite precompact refinement $\cU$. Let $\{ \varphi_U \}_\cU$ be a Lipschitz partition of unity subordinate to $\cU$. For each $U\in\cU$ and $W\subset X$ open, consider the bounded $(k+1)$-linear maps
	\[
	(L_U)_W:\LIP_\infty(W)^{k+1}\to \Poly^k(W),\quad (\pi_0,\ldots,\pi_k)\mapsto \jmath_W(\varphi_U\pi_0,\pi_1,\ldots,\pi_k).
	\]
	The bounded linear maps $\overline{(L_U)}_W:\Poly^k(W)\to \Poly^k(W)$ making the diagram (\ref{diagram:proj}) commute form a presheaf homomorphism of $\{\Poly^k(W)\}$. Since $\spt\varphi_U\subset U$ it follows that $\spt\overline{(L_U)}_W(\pi)\subset U\cap W$ for any $\pi\in\Poly^k(W)$. This shows (a) in the definition of fine presheaves. 
	
	Let $x\in X$ and $D$ be a neighborhood of $x$ meeting only finitely many of the sets in $\cU$. The fact that $\{\varphi_U\}$ is a partition of unity implies that for any $(\pi_0,\ldots,\pi_k)\in\LIP_\infty(W)^{k+1}$
	\begin{align*}
	\sum_{U\in\cU}(L_U)_D(\pi_0,\ldots, \pi_k)=\jmath_D\left(\left(\sum_{U\in\cU}\varphi_U\right)\pi_0,\pi_1,\ldots,\pi_k\right)=\jmath_D(\pi_0,\ldots,\pi_k).
	\end{align*}
	This implies (b) in the definition of fine presheaves.
	
	Since the bounded $(k+1)$-linear maps $Q_W\circ (L_U)_W:\LIP_\infty(W)^{k+1}\to \overline\Poly^k(W)$ satisfy (\ref{eq:locality}) we obtain maps $\overline{(L_U)'}_W:\overline\Poly^k(W)\to\overline\Poly^k(W)$ by Proposition \ref{prop:useful}, for each open $W\subset X$, that form a presheaf homomorphism. Condition (a) now follows from the corresponding statement for $\Poly^k(W)$ and (\ref{eq:relation}). Condition (b) follows as above. 
\end{proof}

We denote by $\Polys^k(X)$ and $\Polys_x^k(X)$ (respectively, $\overline\Polys^k(X),\overline\Polys_x^k(X)$) the \'etal\'e space and stalk at $x$ associated to $\{\Poly^k(U)\}_U$ (respectively for $\{\overline\Poly^k(U)\}_U$). We further denote the various spaces of sections associated to $\Polys^k(X)$ and $\overline\Polys^k(X)$ by
\begin{align*}
&\G^k(X):=\G(\Polys^k(X)),\ \Gamma^k(X):=\Gamma(\Polys^k(X)),\\
&\G^k_c(X):=\G_c(\Polys^k(X)),\ \Gamma_c^k(X):=\Gamma^k(X)\cap \G^k_c(X)\\
&\overline\G^k(X):=\G(\overline\Polys^k(X)),\ \overline\Gamma^k(X):=\Gamma(\overline\Polys^k(X)),\\
&\overline\G^k_c(X):=\G_c(\overline\Polys^k(X)),\ \overline\Gamma_c^k(X):=\overline\Gamma^k(X)\cap \overline\G^k_c(X)
\end{align*}

\begin{definition}
A continuous section in $\overline\Gamma^k(X)$ is a \emph{polylipschitz $k$-form on $X$}. A continuous section of $\Gamma^k(X)$ is called a \emph{$k$-polylipschitz section}.
\end{definition}

We denote by
\begin{align}\label{eq:gammaembed}
\gamma=\gamma_X^k&:\Poly^k(X)\to \Gamma^k(X),\quad \pi\mapsto(x\mapsto [\pi]_x),\\
\bar\gamma=\bar\gamma_X^k&:\overline\Poly^k(X)\to \overline\Gamma^k(X),\quad \bar\pi\mapsto (x\mapsto [\bar\pi]_x),\nonumber
\end{align}
the natural linear maps in \eqref{eq:gamma} associated to the presheaves $\{\Poly^k(U)\}_U$ and $\{\overline\Poly^k(U)\}_U$, respectively.

\subsection{Relationship of polylipschitz forms and polylipschitz sections}
We briefly describe the relationship between $\Gamma^k(X)$ and $\overline\Gamma^k(X)$. The natural operators in this section arise as \emph{linear maps associated to presheaf cohomomorphisms}. We give a general sheaf theoretic construction in Appendix \ref{sec:cohomom}, and establish some of its basic properties there; see Proposition \ref{prop:assocproperties}. Here we apply the results in Appendix \ref{sec:cohomom} without further mention. We assume throughout this section that $X$ and $Y$ are locally compact metric spaces and $k,m\in\N$ are possibly distinct natural numbers.

Using (\ref{eq:restrcomm}) and the uniqueness in diagram \eqref{diagram:proj}, we see that the quotient map  (\ref{eq:quotient}) satisfies
\begin{align}\label{eq:relation}
Q_U\circ\rho_{U,V}=\bar\rho_{U,V}\circ Q_V.
\end{align}
Thus the collection $\{Q_U:\Poly^k(U)\to \overline\Poly^k(U)\}_U$ is a presheaf homomorphism. Let $\mathcal Q$ be the associated linear map
\begin{equation}\label{eq:sectquotient}
\mathcal Q=\mathcal Q_k:\G^k(X)\to \overline\G^k(X).
\end{equation}
The next proposition shows that cohomomorphisms $\displaystyle \{ \Poly^k(U)\}\to \{\overline\Poly^m(V)\}$ satisfying the locality condition \eqref{eq:locality} descend to cohomomorphisms $\displaystyle \{ \overline\Poly^k(U)\}\to \{\overline\Poly^m(V)\}$.

\begin{proposition}\label{prop:hcohomom}
	Let $f:X\to Y$ be a continuous map and $$\displaystyle \varphi=\{\varphi_U:\Poly^k(U)\to \overline\Poly^m(f\inv U)\}$$ an $f$-cohomomorphism, where each $\varphi_U$ is bounded. Assume $\varphi_U\circ\jmath_U$ satisfies \eqref{eq:locality} and let $\overline\varphi_U:\overline\Poly^k(U)\to\overline\Poly^m(f\inv U)$ be the unique bounded linear map satisfying $\varphi_U=\bar\varphi_U\circ Q^k_U$, for each open $U\subset Y$. Then $$ \overline\varphi=\{ \overline\varphi_U:\overline\Poly^k(U)\to\overline\Poly^m(f\inv U) \}$$ is an $f$-cohomomorphism. The linear maps $\varphi^*$ and $\overline\varphi^*$ associated to $\varphi$ and $\overline\varphi$ satisfy
	\[
	\varphi^*=\overline\varphi^*\circ\mathcal Q.
	\]
\end{proposition}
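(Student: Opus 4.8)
The plan is to dispatch the two assertions in turn. The existence of the maps $\overline\varphi_U$ is not itself in question: for open $U\subset Y$ the map $\varphi_U\circ\jmath_U\colon\LIP_\infty(U)^{k+1}\to\overline\Poly^m(f\inv U)$ is a bounded $(k+1)$-linear map which is local by hypothesis, and the unique linear map it induces via the universal property~\eqref{diagram:proj} is $\varphi_U$ itself. Hence Proposition~\ref{prop:useful} (applied over the metric space $U$, with target Banach space $\overline\Poly^m(f\inv U)$) supplies the unique bounded linear $\overline\varphi_U$ with $\varphi_U=\overline\varphi_U\circ Q^k_U$. So the content to prove is (i) that $\{\overline\varphi_U\}$ is compatible with the restriction maps $\bar\rho$, i.e.\ is an $f$-cohomomorphism, and (ii) the relation $\varphi^*=\overline\varphi^*\circ\mathcal Q$.

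For (i) I would fix open sets $U\subset V\subset Y$, so that $f\inv U\subset f\inv V$ are open in $X$, and aim at
\[
\bar\rho_{f\inv U,f\inv V}\circ\overline\varphi_V=\overline\varphi_U\circ\bar\rho_{U,V}\colon\overline\Poly^k(V)\to\overline\Poly^m(f\inv U).
\]
Since $Q^k_V\colon\Poly^k(V)\to\overline\Poly^k(V)$ is surjective, it suffices to check this identity after precomposing with $Q^k_V$. On the left, $\varphi_V=\overline\varphi_V\circ Q^k_V$ together with the cohomomorphism property of $\varphi$ give
\[
\bar\rho_{f\inv U,f\inv V}\circ\overline\varphi_V\circ Q^k_V=\bar\rho_{f\inv U,f\inv V}\circ\varphi_V=\varphi_U\circ\rho_{U,V}.
\]
On the right, the compatibility~\eqref{eq:relation}, namely $Q^k_U\circ\rho_{U,V}=\bar\rho_{U,V}\circ Q^k_V$, together with $\varphi_U=\overline\varphi_U\circ Q^k_U$ give
\[
\overline\varphi_U\circ\bar\rho_{U,V}\circ Q^k_V=\overline\varphi_U\circ Q^k_U\circ\rho_{U,V}=\varphi_U\circ\rho_{U,V}.
\]
The two composites coincide, and surjectivity of $Q^k_V$ then yields the desired identity; thus $\overline\varphi$ is an $f$-cohomomorphism.

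For (ii) I would argue on stalks. Recall that for $\omega\in\G^k(Y)$ and $x\in X$ the value $(\varphi^*\omega)(x)$ is computed by choosing an open $U\ni f(x)$ and $g\in\Poly^k(U)$ with $\omega(f(x))=[g]_{f(x)}$ and setting $(\varphi^*\omega)(x)=[\varphi_U g]_x$; likewise $(\mathcal Q\omega)(f(x))=[Q^k_U g]_{f(x)}$, and therefore $(\overline\varphi^*(\mathcal Q\omega))(x)=[\overline\varphi_U(Q^k_U g)]_x=[\varphi_U g]_x=(\varphi^*\omega)(x)$. As $x$ is arbitrary, $\varphi^*=\overline\varphi^*\circ\mathcal Q$. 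Equivalently, this is functoriality of the passage from a cohomomorphism to its associated linear map applied to the factorization $\varphi=\overline\varphi\circ\{Q^k_U\}$, where $\{Q^k_U\}$ is viewed as an $\id_Y$-cohomomorphism with associated map $\mathcal Q$.

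The only genuine subtlety is the bookkeeping in step (i): the descended family $\{\overline\varphi_U\}$ inherits restriction-compatibility from $\varphi$ precisely because the quotient maps $Q^k_U$ are surjective and intertwine the two systems of restriction maps through~\eqref{eq:relation}. Everything else is formal, the continuity and boundedness of the maps playing no role beyond making the objects well-defined.
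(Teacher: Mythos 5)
Your proof is correct and takes essentially the same route as the paper: existence and uniqueness of $\overline\varphi_U$ via Proposition \ref{prop:useful}, restriction-compatibility of $\{\overline\varphi_U\}$ from the cohomomorphism property of $\varphi$ together with \eqref{eq:relation}, and the identity $\varphi^*=\overline\varphi^*\circ\mathcal Q$ read off stalkwise from $\varphi_U=\overline\varphi_U\circ Q^k_U$ and \eqref{eq:assoc}. The only cosmetic difference is that you verify the compatibility by right-cancelling the surjective quotient $Q^k_V$, whereas the paper phrases the same cancellation as the uniqueness of the induced map in diagram \eqref{diagram:proj} at the multilinear level.
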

\begin{proof}
	The existence and uniqueness of $\overline\varphi_U$ follows from Proposition \ref{prop:useful}. For open sets $U\subset V\subset Y$, $\varphi_U\circ\rho_{U,V}\circ\jmath_U$ and $\rho_{f\inv U,f\inv V}\circ\varphi_V\circ\jmath_V$ satisfy \eqref{eq:locality}. Since $\varphi$ is an $f$-cohomomorphism, \eqref{eq:relation} and the uniquenenss in diagram \eqref{diagram:proj} implies that
	\begin{align*}
	\overline\varphi_U\circ\bar\rho_{U,V}=\bar\rho_{f\inv U,f\inv V}\circ\overline\varphi_V.
	\end{align*}
	The identity $\varphi^*=\overline\varphi^*\circ\mathcal Q$ follows from the fact that $\varphi_U=\bar\varphi_U\circ Q_U$ for each open $U\subset Y$.
\end{proof}

\subsection{Sequential convergence on $\Gamma_c^k(X)$ and $\overline\Gamma_c^k(X)$} To study sequential continuity of linear maps between polylipschitz forms and sections, we introduce a notion of sequential convergence on $\Gamma_c^k(X)$ and $\overline\Gamma_c^k(X)$. Recall that, by Proposition \ref{prop:polyfine} and Lemma \ref{lem:overlap}, polylipschitz forms and sections admit overlap-compatible representations indexed by a locally finite precompact open cover; see also Remark \ref{rmk:nicecover}.

\begin{definition}\label{def:polyformconv}
	We say a sequence $(\omega_n)$ in $\Gamma_c^k(X)$ \emph{convergences to $\omega\in\Gamma_c^k(X)$}, denoted $\omega_n\to\omega$ in $\Gamma_c^k(X)$, if there exists a compact set $K\subset X$, a locally finite precompact open cover $\cU$ of $X$, and, for each $n\in \N$, a collection $\{\pi^n_U \}_{U\in\cU}$ overlap-compatible with $\omega_n-\omega$ having the following properties:
	\begin{itemize}
		\item[(1)] $\spt(\omega_n-\omega)\subset K$ for each $n\in\N$, and
		\item[(2)] $\pi^n_U\to 0$ in $\Poly^k(U)$ for each $U\in \cU$.
	\end{itemize}
	Convergence of a sequence $(\bar\omega_n)$ in $\overline\Gamma_c^k(X)$ to $\bar\omega\in\overline\Gamma_c^k(X)$ is defined analogously.
\end{definition}

For metric spaces $X$ and $Y$, let $A=\{ A(U) \}_U$ and $B=\{ B(V) \}_V$ denote either of the presheaves $\{ \Poly^k(U) \}_U$ or $\{ \overline\Poly^m(U) \}_U$ on $X$ and $Y$, respectively, and let
\[
L:\Gamma_c(\mathcal B(Y))\to \Gamma_c(\mathcal A(X))
\]
be a linear map. We say that $L$ is \emph{sequentially continuous} if
\[
L(\omega_n)\to L(\omega)\textrm{ in }\Gamma_c(\mathcal A(X)) \textrm{ whenever }\omega_n\to \omega \textrm{ in }\Gamma_c(\mathcal B(X)).
\]

The natural quotient map from polylipschitz sections to polylipschitz forms is sequentially continuous.
\begin{proposition}
The map $\mathcal Q:\Gamma_c^k(X)\to \overline\Gamma_c^k(X)$ is sequentially continuous.
\end{proposition}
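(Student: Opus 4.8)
The plan is to unwind the definition of sequential convergence in $\overline\Gamma_c^k(X)$ (Definition \ref{def:polyformconv}) and reduce the statement to the sequential continuity of the quotient maps $Q_U:\Poly^k(U)\to\overline\Poly^k(U)$ at the level of the presheaves. Recall that $\mathcal Q$ is the linear map associated to the presheaf homomorphism $\{Q_U\}_U$, so it acts on a continuous section $\omega\in\Gamma_c^k(X)$ by $\mathcal Q(\omega)(x)=[Q_U(\pi_U)]_x$ whenever $\omega(x)=[\pi_U]_x$; in particular, if $\{\pi_U\}_{\cU}$ is a representation of $\omega$, then $\{Q_U(\pi_U)\}_{\cU}$ is a representation of $\mathcal Q(\omega)$, and the overlap condition \eqref{overlap} for $\{\pi_U\}$ passes through $Q_U$ (by \eqref{eq:relation}) to give the overlap condition for $\{Q_U(\pi_U)\}$. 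Also note that $\spt\mathcal Q(\omega)\subset\spt\omega$, since $Q_U$ is a presheaf homomorphism and support can only shrink under presheaf homomorphisms.

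First I would take a sequence $\omega_n\to\omega$ in $\Gamma_c^k(X)$ and fix, by Definition \ref{def:polyformconv}, a compact set $K$, a locally finite precompact open cover $\cU$, and for each $n$ a collection $\{\pi^n_U\}_{U\in\cU}$ overlap-compatible with $\omega_n-\omega$ satisfying $\spt(\omega_n-\omega)\subset K$ and $\pi^n_U\to 0$ in $\Poly^k(U)$ for each $U\in\cU$. By linearity of $\mathcal Q$ it suffices to produce, for the sequence $\mathcal Q(\omega_n)-\mathcal Q(\omega)=\mathcal Q(\omega_n-\omega)$, a witnessing datum as in the definition of convergence in $\overline\Gamma_c^k(X)$. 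I claim the same compact set $K$, the same cover $\cU$, and the collection $\{Q_U(\pi^n_U)\}_{U\in\cU}$ do the job. Indeed, $\{Q_U(\pi^n_U)\}_{\cU}$ is overlap-compatible with $\mathcal Q(\omega_n-\omega)$ by the remarks above; condition (1) holds because $\spt\mathcal Q(\omega_n-\omega)\subset\spt(\omega_n-\omega)\subset K$; and condition (2), namely $Q_U(\pi^n_U)\to 0$ in $\overline\Poly^k(U)$ for each fixed $U\in\cU$, is exactly the statement that $Q_U$ is sequentially continuous in the sense of Definitions \ref{conv} and \ref{hconv}, which is immediate from Definition \ref{hconv} since $\pi^n_U\in Q_U\inv(Q_U(\pi^n_U))$ and $0\in Q_U\inv(0)$ and $\pi^n_U\to 0$ in $\Poly^k(U)$.

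The main (and only) point requiring a little care is the compatibility and overlap bookkeeping: one must check that applying $Q_U$ componentwise to an overlap-compatible family again yields an overlap-compatible family for the image section, and that the image section is indeed represented by this family. This is where \eqref{eq:relation}, i.e. $Q_U\circ\rho_{U,V}=\bar\rho_{U,V}\circ Q_V$, is used: it guarantees $\bar\rho_{U\cap W,U}(Q_U(\pi_U))=Q_{U\cap W}(\rho_{U\cap W,U}(\pi_U))=Q_{U\cap W}(\rho_{U\cap W,W}(\pi_W))=\bar\rho_{U\cap W,W}(Q_W(\pi_W))$, so the overlap condition transfers. The rest is a direct translation along the definitions, with no analytic estimate involved beyond the already-recorded sequential continuity of $\jmath$ and $\bar\jmath$ (which is not even needed here). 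I would then conclude that $\mathcal Q(\omega_n)\to\mathcal Q(\omega)$ in $\overline\Gamma_c^k(X)$, which is the assertion.
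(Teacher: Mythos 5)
Your argument is correct and is essentially the paper's: the paper deduces the proposition from the general Proposition \ref{prop:seqcont} (sequential continuity of the linear map associated to a cohomomorphism, applied with $f=\id$ and $\varphi_U=Q_U$), and the proof of that proposition is exactly your bookkeeping — push the overlap-compatible collections $\{\pi_U^n\}$ through $Q_U$, keep the same $K$ and $\cU$, use the support containment from Proposition \ref{prop:assocproperties}, and invoke the sequential continuity of each $Q_U$, which as you note is immediate from Definition \ref{hconv}. You have merely inlined the general lemma for this special case, so there is nothing to add.
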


This follows immediately from an abstract result on sequential continuity of linear maps associated to cohomomorphisms.

\begin{proposition}\label{prop:seqcont}
	Suppose $f:X\to Y$ is a proper continuous map, and
	\[
	\varphi=\{ \varphi_U:B(U)\to A(f\inv(U)) \}
	\]
	an $f$-cohomomorphism, where $A=\{ A(U) \}_U$ and $B=\{ B(V) \}_V$ denote either of the presheaves $\{ \Poly^k(U) \}_U$ or $\{ \overline\Poly^m(U) \}_U$ on $X$ and $Y$, respectively. 
	
	If $\varphi_U$ is bounded and sequentially continuous for each open $U\subset Y$, then the associated linear map
	\[
	\varphi^*:\Gamma_c(\mathcal B(Y))\to \Gamma_c(\mathcal A(X))
	\]
	is sequentially continuous.
	
	If $A=\{ \overline\Poly^m(U) \}_U$, $B=\{ \Poly^k(U) \}_U$, and $\varphi_U\circ\jmath_U$ satisfies \eqref{eq:locality} for each open $U\subset Y$, then the linear map
	\[
	\overline\varphi^*:\overline\Gamma_c^k(Y)\to\overline\Gamma_c^m(X)
	\]
	in Proposition \ref{prop:hcohomom} is sequentially continuous.
\end{proposition}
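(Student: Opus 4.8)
The plan is to reduce everything to the local picture. Fix a convergent sequence $\omega_n\to\omega$ in $\Gamma_c(\mathcal B(Y))$, witnessed by a compact set $K\subset Y$, a locally finite precompact open cover $\cV=\{V\}$ of $Y$, and overlap-compatible collections $\{\sigma^n_V\}_{V\in\cV}$ for $\omega_n-\omega$ with $\sigma^n_V\to 0$ in $B(V)$ for each $V$, and $\spt(\omega_n-\omega)\subset K$. Since $f$ is proper, $f\inv(K)$ is compact, so the support condition (1) in Definition \ref{def:polyformconv} for the images is immediate once we know $\varphi^*(\omega_n-\omega)$ is supported in $f\inv(K)$; this support containment follows from the fact that the associated map $\varphi^*$ respects stalks and $\varphi^*$ of a section vanishing on $Y\setminus K$ vanishes on $f\inv(Y\setminus K)=X\setminus f\inv(K)$ (this is part of the basic properties of $\varphi^*$ from Appendix \ref{sec:cohomom}, Proposition \ref{prop:assocproperties}). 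So the real content is producing an overlap-compatible representation of $\varphi^*(\omega_n)-\varphi^*(\omega)=\varphi^*(\omega_n-\omega)$ with the convergence property (2).

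The key step is to transport the cover and the representing collection through $\varphi$. Set $\cU=\{f\inv(V):V\in\cV\}$; this is a locally finite open cover of $X$, and since $f$ is proper each $f\inv(V)$ has precompact refinement, so after passing to a locally finite precompact refinement (using Remark \ref{rmk:nicecover}, which says overlap-compatible representations are stable under refinement) we may assume $\cU$ is a locally finite precompact open cover. For each $V\in\cV$ put $\pi^n_{f\inv(V)}:=\varphi_V(\sigma^n_V)\in A(f\inv(V))$. Because $\varphi$ is an $f$-cohomomorphism, it intertwines the restriction maps of $B$ and $A$, so the overlap condition \eqref{overlap} for $\{\sigma^n_V\}$ is carried to the overlap condition for $\{\pi^n_{f\inv(V)}\}$; and compatibility with $\omega_n-\omega$ is carried to compatibility of $\{\pi^n_{f\inv(V)}\}$ with $\varphi^*(\omega_n-\omega)$, again by the stalk-wise description of $\varphi^*$ in Proposition \ref{prop:assocproperties}. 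Finally, since each $\varphi_V$ is bounded \emph{and sequentially continuous} (in the sense of Definitions \ref{conv}/\ref{hconv}), $\sigma^n_V\to 0$ in $B(V)$ gives $\pi^n_{f\inv(V)}=\varphi_V(\sigma^n_V)\to 0$ in $A(f\inv(V))$, which is property (2). After the refinement one must re-express the representing collection on the smaller cover via restriction maps, but since the presheaf restriction maps $\rho_{U,U'}$ are themselves bounded and sequentially continuous (they are built from $r_{U,U'}$ and $\bar r_{U,U'}$ via projective tensor products), the convergence $\pi^n\to 0$ survives the refinement. This proves sequential continuity of $\varphi^*$.

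For the second assertion, we already have from Proposition \ref{prop:hcohomom} that $\overline\varphi=\{\overline\varphi_U\}$ is an $f$-cohomomorphism with $\varphi^*=\overline\varphi^*\circ\mathcal Q$, and by the first part $\varphi^*$ is sequentially continuous. To deduce sequential continuity of $\overline\varphi^*$ directly, one repeats the argument above with $B=\{\overline\Poly^k(U)\}$: given $\bar\omega_n\to\bar\omega$ in $\overline\Gamma^k_c(Y)$, lift the witnessing overlap-compatible collection $\{\bar\sigma^n_V\}$ to a collection $\{\sigma^n_V\}$ with $\sigma^n_V\to 0$ in $\Poly^k(V)$ using Definition \ref{hconv} (choosing compatible preimages under $Q_V$, which is possible cover-element by cover-element since $Q_V$ is an open surjection), apply the boundedness and sequential continuity of $\varphi_V$ to get $\varphi_V(\sigma^n_V)\to 0$ in $\overline\Poly^m(f\inv V)$, and note $\overline\varphi_V(\bar\sigma^n_V)=\varphi_V(\sigma^n_V)$. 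The same cohomomorphism/overlap bookkeeping as before then shows $\{\overline\varphi_V(\bar\sigma^n_V)\}$ is overlap-compatible with $\overline\varphi^*(\bar\omega_n-\bar\omega)$ and converges to $0$ in the sense of Definition \ref{hconv}.

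The main obstacle is the bookkeeping around the refinement step: one must check carefully that passing from the cover $\{f\inv(V)\}$ to a locally finite precompact refinement $\cU$, and re-expressing the representing collection through the (bounded, sequentially continuous) restriction maps $\rho_{U,f\inv(V)}$, preserves both the overlap condition and the convergence $\pi^n_U\to 0$ in $A(U)$ for each $U\in\cU$. This is where one needs Remark \ref{rmk:nicecover} together with the observation that restriction maps are themselves sequentially continuous; neither point is deep, but getting the indices and the dependence of the choice of preimage on the cover element straight requires care. Everything else is a direct transport of structure through the $f$-cohomomorphism using the basic properties established in Appendix \ref{sec:cohomom}.
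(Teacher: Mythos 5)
Your proposal is correct and follows essentially the same route as the paper: transport the cover and the overlap-compatible representing collections through the $f$-cohomomorphism (properness giving compactness of $f\inv(K)$ and precompactness of the preimage cover), then invoke the sequential continuity of each $\varphi_U$ locally; for the homogeneous case you re-derive inline, by lifting through $Q_V$, exactly the content the paper obtains by citing Proposition \ref{prop:useful}(3) before applying the first part to $\overline\varphi$. The extra refinement step you include is unnecessary (each $f\inv(V)$ is already precompact when $f$ is proper) but harmless.
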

\begin{proof}
	We prove the first claim in case $A=\{ \overline\Poly^m(U)\}_U$ and $B=\{ \Poly^k(U) \}$. The other cases are analogous.
	
	Since $\varphi^*$ is linear it suffices to prove sequential continuity at the origin. Let $\omega_n\to 0$ in $\Gamma_c^k(Y)$, and let $K\subset X$, $\mathcal U$ and $\{ \pi_U^n \}_\mathcal{U}$ be as in Definition \ref{def:polyformconv}. For each $V\in f\inv \cU$ choose $U_V\in\cU$ such that $V=f\inv\cU$. Then the collection $$\{\varphi_{U_V}(\pi_{U_V}^n) \}_{f\inv\mathcal U}$$ is overlap-compatible with $\varphi^*\omega_n$ for each $n\in\N$; cf.~proof of Proposition \ref{prop:assocproperties}(2). Since $\spt\varphi^*\omega^n\subset f\inv K$ and $\varphi_{U_V}$ is sequentially continuous for each $n\in\N$ and $V\in f\inv \cU$, we have that $\varphi^*\omega_n\to 0$ in $\Gamma^m_c(X)$.
	
To prove the last claim assume that $A=\{ \overline\Poly^m(U) \}_U$, $B=\{ \Poly^k(U) \}_U$, and that $\varphi_U\circ\jmath_U$ satisfies \eqref{eq:locality} for each $U\subset Y$. By Proposition \ref{prop:useful} the unique map $\overline\varphi_U$ satisfying $\varphi_U=\overline\varphi_U\circ Q_U$ is sequentially continuous. Thus, the associated linear map $	\overline\varphi^*:\overline\Gamma_c^k(Y)\to\overline\Gamma_c^m(X) $ is sequentially continuous.
\end{proof}

\begin{remark}\label{rmk:bilincont}
	Propositions \ref{prop:seqcont} and \ref{prop:hcohomom} have natural bilinear analogues in the situation of Remark \ref{rmk:bilinear}. The proofs are similar and we omit the details.
\end{remark}

\section{Exterior derivative, pull-back, and cup-product of polylipschitz forms}\label{sec:aux}
In this section we introduce the \emph{exterior derivative, pull-back} and \emph{cup product} on polylipschitz forms and sections. We prove that they are sequentially continuous with respect to a natural notion of sequential convergence on $\Gamma_c^k(X)$ and $\overline\Gamma_c^k(X)$; cf.~Definition \ref{def:polyformconv}. The results in this section are important for applications to currents, and will be used extensively in \cite{teripekka2}.

\subsection{Pull-back} Let $f:X\to Y$ be a Lipschitz map. Consider the $f$-cohomomorphism $\varphi:=\{ f^\#_U:\Poly^k(U)\to \Poly^k(f\inv U) \}$, where $ f^\#_U$ is given by
\[
\pi\mapsto \pi\circ(f|_{f\inv U}\times\cdots\times f|_{f\inv U}).
\]
The maps $\varphi':=\{Q_{f\inv U}\circ f^\#_U:\Poly^{k}(U)\to \overline\Poly^{k}(f\inv U)\}$ form an $f$-cohomomorphism and $Q_{f\inv U}\circ f^\#_U\circ\jmath_U$ satisfies \eqref{eq:locality} for each $U\subset Y$. By Proposition \ref{prop:hcohomom} the linear maps $f^\#:\G^k(Y)\to \G^k(X)$ and  $\bar f^\#:\overline\G^k(Y)\to \overline\G^k(X)$ associated to $\varphi$ and $\overline\varphi'$, respectively, satisfy
\[
\mathcal Q\circ  f^\#=\bar f^\#\circ\mathcal Q.
\]
We refer to the linear maps $f^\#$ and $\bar f^\#$ as \emph{pull-backs}. If $f$ is proper, Proposition \ref{prop:seqcont} implies that $f^\#$ and $\bar f^\#$ are sequentially continuous.

If $E\subset X$, the pull-backs $\iota_E^\#$ and $\bar \iota_E^\#$ given by the construction above for the inclusion map $\iota_E:E\hookrightarrow X$ are called \emph{restrictions to $E$}. We denote 
\[
\omega|_E:=\iota_E^\#(\omega)\textrm{ and }\bar\omega|_E:=\bar\iota_E^\#(\bar\omega)
\]
for $\omega\in \G^k(X)$ and $\bar\omega\in\overline\G^k(X)$. Note that, when $E\subset X$ is closed, the inclusion $\iota_E$ is a proper map. Thus the restriction operator to closed sets is sequentially continuous.

\subsection{Exterior derivative}
As in Alexander-Spanier cohomology (see \cite[Section 5.26]{war83}) we define a linear map 
\[ 
d=d_X^k:\LIP_\infty(X^{k+1})\to \LIP_\infty(X^{k+1})
\] 
by
\begin{equation}\label{extd} 
d_X^k\pi(x_0,\ldots,x_{k+1})=\sum_{j=0}^{k+1}(-1)^j\pi(x_0,\ldots,\hat x_j,\ldots x_{k+1})
\end{equation}
for $\pi\in \LIP_\infty(X^{k+1})$ and $x_0,\ldots,x_k\in X$. It is a standard exercise to show that

\begin{equation}\label{eq:standardex}
d_X^{k+1}\circ d_X^k=0
\end{equation}

\begin{lemma}\label{extdcont}
	For each $\pi\in\Poly^k(X)$ and each open set $V\subset X$, we have
	\begin{equation}\label{x4}
	L_{k+1}(d\pi; V)\le (k+2)L_k(\pi;V).
	\end{equation}
Thus $d$ defines a bounded linear map $d:\Poly^k(X)\to \Poly^{k+1}(X)$ which, moreover, is sequentially continuous.
\end{lemma}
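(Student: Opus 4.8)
The plan is to reduce everything to the elementary estimate \eqref{x4}, which controls the projective norm of $d\pi$ on each open set $V\subset X$ in terms of that of $\pi$. First I would verify \eqref{x4} on representations: given $\pi\in\Poly^k(X)$ and a representation $(\pi_0^j,\ldots,\pi_k^j)_j\in\Rep(\pi)$, the formula \eqref{extd} expresses $d\pi$ as a sum of $k+2$ terms, each of which is, after reindexing, a sum $\sum_j \sigma_0^j\otimes\cdots\otimes\sigma_{k+1}^j$ where one factor $\sigma_l^j$ is the constant function $1$ (coming from the omitted variable $\hat x_j$) and the remaining factors are a reordering of $\pi_0^j,\ldots,\pi_k^j$. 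Since $L(1)=1$ and the projective norm is a cross norm \eqref{eq:projnorm2}, each such term has projective tensor norm at most $\sum_j L(\pi_0^j)\cdots L(\pi_k^j)$, so by the triangle inequality and by taking the infimum over representations we obtain \eqref{x4}, first with $V=X$ and then, applying the same argument to $\rho_{V,X}(\pi)$, for every open $V\subset X$. Strictly speaking one should note that $d\pi$ again lies in $\Poly^{k+1}(X)$: this is immediate because the representation of $d\pi$ produced above satisfies the summability condition \eqref{poly}, so it represents a genuine polylipschitz function, and the pointwise identification of Section \ref{sec:polylipfns} shows it coincides with the function defined by \eqref{extd}.

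Next, boundedness of $d:\Poly^k(X)\to\Poly^{k+1}(X)$ is just the case $V=X$ of \eqref{x4}, with operator norm at most $k+2$. Linearity is clear from \eqref{extd}.

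For sequential continuity I would unwind Definition \ref{conv}. Suppose $\pi^n\to 0$ in $\Poly^k(X)$, and fix a compact product set $W=W_0\times\cdots\times W_{k+1}\subset X^{k+2}$. Each face obtained by deleting one coordinate of $W$ is a compact product set $V^{(j)}=W_0\times\cdots\times\widehat{W_j}\times\cdots\times W_{k+1}\subset X^{k+1}$, so by hypothesis there are representations $(\pi_0^{j,n},\ldots,\pi_k^{j,n})\in\Rep(\pi^n)$ whose uniform-norm products on $V^{(j)}$ tend to $0$ as $n\to\infty$ and which are uniformly summable in $n$. Applying the reindexing from the first paragraph (inserting the constant $1$ in the $j$-th slot) produces, for each $j$, a representation of one of the $k+2$ summands of $d\pi^n$; concatenating these $k+2$ representations gives a representation of $d\pi^n$ whose restriction to $W$ has the required properties, since the constant factor $1$ contributes a bounded (in fact $=1$) uniform norm and the product over the remaining coordinates is exactly the product over the appropriate face $V^{(j)}$. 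Thus $d\pi^n\to 0$ in $\Poly^{k+1}(X)$, and by linearity $d$ is sequentially continuous.

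The only mildly delicate point — and the main obstacle I anticipate — is the bookkeeping that matches the summands of \eqref{extd} with the coordinate faces of $W$, and in particular the observation that deleting the $j$-th variable really does turn the product $\|\sigma_0^j|_{W_0}\|_\infty\cdots\|\sigma_{k+1}^j|_{W_{k+1}}\|_\infty$ into $\|\pi_0^{j,n}|_{W_0}\|_\infty\cdots\|\pi_k^{j,n}|_{W_{\ne j}}\|_\infty$ with the slot corresponding to the omitted variable contributing the harmless factor $1$. Once this correspondence is set up cleanly, both \eqref{x4} and the convergence claim follow by the triangle inequality; no genuinely new idea beyond the cross-norm property \eqref{eq:projnorm2} is needed.
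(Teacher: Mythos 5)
Your proposal is correct and follows essentially the same route as the paper: the formula \eqref{eq:simplebdry} inserting a constant $1$ into one slot, the cross-norm property and subadditivity to get \eqref{x4} by taking the infimum over representations, and then a representation-by-representation verification of Definition \ref{conv} for sequential continuity. Your bookkeeping with the faces $V^{(j)}$ of a compact product set in $X^{k+2}$ is a slightly more explicit treatment of a point the paper's proof glosses over, but it is the same argument, not a different one.
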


It follows from Lemma \ref{extdcont} that $\{d_U^k:\Poly^k(U)\to \Poly^{k+1}(U)\}$, and consequently $\{Q_U^{k+1}\circ d_U^k:\Poly^k(U)\to \overline\Poly^{k+1}(U) \}$, are presheaf homomorphisms, and $Q_U^{k+1}\circ d_U^k\circ\jmath_U$ satisfies \eqref{eq:locality} for each open $U\subset X$. By Propositions \ref{prop:hcohomom} and \ref{prop:seqcont} we obtain sequentially continuous associated linear maps
\[
\bar d :=\bar d_X^k:\overline\G^k(X)\to \overline\G^{k+1}(X)
\]
and
\[
d:=d_X^k:\G^k(X)\to \G^{k+1}(X),
\]
called the \emph{exterior derivative} of polylipschitz forms and sections, respectively. 

\begin{remark}
In fact the identity
\begin{equation}\label{restprop}
\bar d_{A}^k(\bar \rho_{A,B}(\bar\pi))= \bar \rho_{A,B}(\bar d_B^k\bar \pi)\quad \textrm{ for all } \bar\pi\in \overline \Poly^k(B).
\end{equation}
holds for \emph{any} sets $A,B\subset X$. Thus, if $E\subset X$ and $\bar\omega\in \overline\G^k(X)$ we have $\bar d(\bar d\bar\omega)=0$ and $(\bar d\bar \omega)|_E=\bar d(\bar\omega|_E).$ The first identity follows from (\ref{eq:standardex}) and Proposition \ref{prop:assocproperties}(4), while the second is implied by (\ref{restprop}).

The same identities hold for restrictions and the exterior derivative of polylipschitz sections.
\end{remark}
These properties of the exterior derivative and restriction are used in the sequel without further mention.

We conclude this subsection with the proof of Lemma \ref{extdcont}. For the proof, the following expression, for $(\pi_0,\ldots,\pi_k)\in\LIP_\infty(X)^{k+1}$ and $\pi=\pi_0\otimes\cdots\otimes\pi_k$, will be useful.
\begin{equation}\label{eq:simplebdry}
d_X^k(\pi_0\otimes\cdots\otimes\pi_k)=\sum_{l=0}^{k+1}(-1)^l\pi_0\otimes\cdots\otimes \pi_{l-1}\otimes 1\otimes\pi_{l}\otimes\cdots\otimes\pi_k
\end{equation}
\begin{proof}[Proof of Lemma \ref{extdcont}]
	Let $ \pi_0,\ldots,\pi_k\in \LIP_\infty(X)$ and $\pi=\pi_0\otimes\cdots\otimes\pi_k\in \Poly^k(X)$. By (\ref{eq:simplebdry}) we have
	\[ 
	L_k(d_X^k\pi;V)\le \sum_{l=0}^{k+1}L(\pi_0|_V)\cdots L(\pi_k|_V)=(k+2)L_k(\pi;V). 
	\] 
	Thus, by the subadditivity of $L_k(\cdot; V)$, we have, for each $\pi\in \Poly^k(X)$ and  $\displaystyle (\pi_0^j, \ldots,\pi_k^j)_j\in \Rep(\pi)$, the estimate
	\begin{align*}
	L_{k+1}(d_X^k\pi;V)\le \sum_j(k+2)L(\pi_0^{j}|_V)\cdots  L(\pi_k^j|_V).
	\end{align*}
	Taking infimum over all such representatives yields (\ref{x4}).
	
	To show that $d_X^k$ is sequentially continuous suppose $\pi^n\to 0$ in $\Poly^k(X)$ and let $V=V_0\times\cdots\times V_k$ be compact.  For each $n\in\N$, let
	\[
	\pi^n=\sum_j^\infty\pi_0^{j,n}\otimes\cdots\otimes\pi_k^{j,n}
	\]
	be a representation of $\pi^n$ satisfying (1) and (2) in Definition \ref{conv}. Then 
	\begin{equation}\label{eq:repr}
	d_X\pi^n=\sum_{l=0}^{k+1}\sum_j^\infty(-1)^l\pi_0^{j,n}\otimes\cdots\otimes\pi_{l-1}^{j,n}\otimes 1 \otimes\pi_l^{j,n}\cdots\otimes\pi_k^{j,n} 
	\end{equation}
	is a representation of $d_X^k\pi^n$.
	
	To show that condition (1) in Definition \ref{conv} is satisfied it suffices to observe that, for every $j\in \N$ and $l\in \{ 0,\ldots,k+1\}$, we have 
	\[
	\sup_n L((-1)^l\pi_0^{j,n})L(\pi_{1}^{j,n})\cdots L(\pi_k^{j,n})=\sup_n L(\pi_0^{j,n})\cdots L(\pi_k^{j,n}).
	\]
	Moreover, for each compact set  $V_0\times\cdots\times V_k\subset X^{k+1}$, we have
	\begin{align*}
	&\sum_{l=0}^{k+1}\sum_j^\infty\|(-1)^{l}\pi_0^{j,n}|_{V_0}\|_\infty\cdots \|\pi_k^{j,n}|_{V_k}\|_\infty 
	\le  (k+2)\sum_j^\infty\|\pi_0^{j,n}|_{V_0}\|_\infty\cdots \|\pi_k^{j,n}|_{V_k}\|_\infty.
	\end{align*}
	Thus condition (2) in Definition \ref{conv} is satisfied by the representation (\ref{eq:repr}). It follows that $d_X\pi^n\to 0$ in $\Poly^{k+1}(X)$.
\end{proof}

\subsection{Cup product}

Given polylipschitz functions $\pi\in \Poly^k(X)$ and $\sigma\in \Poly^m(X)$, their cup product is the function $\pi\smile\sigma:X^{k+m+1}\to\R$,
\[ 
(x_0,x_1,\ldots x_{k+m})\mapsto \pi(x_0,x_1,\ldots,x_k)\sigma(x_0,x_{k+1},\ldots,x_{m+k}). 
\]

If $(\pi_0^j,\ldots,\pi_k^j)\in \Rep(\pi)$ and $(\sigma_0^j,\ldots,\sigma_k^j)\in \Rep(\sigma)$ are representations of polylipschitz functions $\pi\in \Poly^k(X)$ and $\sigma\in \Poly^m(X)$, respectively, we observe that $(\pi_0^j\sigma_0^i,\pi_1^j,\ldots,\pi_k^j,\sigma_1^i,\ldots,\sigma_m^i)_{i,j}$ is a representation of $\pi\smile\sigma$ and that $\pi\smile \sigma\in \Poly^{k+m}(X)$. The proof of the next Lemma follows from Definition \ref{conv} and straightforward calculations and estimates. We omit the details.

\begin{lemma}\label{lem:cupprodcont}
The cup-product $\cdot\smile\cdot:\Poly^k(X)\times\Poly^m(X)\to \Poly^{k+m}(X)$ is a sequentially continuous bounded bilinear map.
\end{lemma}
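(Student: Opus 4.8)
The plan is to reduce the statement to the estimates already assembled for the exterior derivative in Lemma \ref{extdcont}, since the cup product is structurally very similar: it is defined on generators by a single tensor reshuffling, and the content of the lemma is that this reshuffling is compatible with the projective norm and with the convergence of Definition \ref{conv}.

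First I would check boundedness. On generators $\pi=\pi_0\otimes\cdots\otimes\pi_k$ and $\sigma=\sigma_0\otimes\cdots\otimes\sigma_m$ one has, as noted in the paragraph preceding the lemma,
\[
\pi\smile\sigma=(\pi_0\sigma_0)\otimes\pi_1\otimes\cdots\otimes\pi_k\otimes\sigma_1\otimes\cdots\otimes\sigma_m,
\]
so by the cross-norm property \eqref{eq:projnorm2} and the submultiplicativity $L(\pi_0\sigma_0)\le L(\pi_0)L(\sigma_0)$ in $\LIP_\infty(X)$,
\[
L_{k+m}(\pi\smile\sigma)\le L(\pi_0)L(\sigma_0)L(\pi_1)\cdots L(\pi_k)L(\sigma_1)\cdots L(\sigma_m)=L_k(\pi)L_m(\sigma).
\]
For general $\pi,\sigma$ I would take representations $(\pi_0^j,\dots,\pi_k^j)_j\in\Rep(\pi)$ and $(\sigma_0^i,\dots,\sigma_m^i)_i\in\Rep(\sigma)$, form the double-indexed family $(\pi_0^j\sigma_0^i,\pi_1^j,\dots,\pi_k^j,\sigma_1^i,\dots,\sigma_m^i)_{i,j}$ (which represents $\pi\smile\sigma$), and use bilinearity of the summation together with the generator estimate to get $L_{k+m}(\pi\smile\sigma)\le\big(\sum_j L(\pi_0^j)\cdots L(\pi_k^j)\big)\big(\sum_i L(\sigma_0^i)\cdots L(\sigma_m^i)\big)$; taking infima over the two representations gives $L_{k+m}(\pi\smile\sigma)\le L_k(\pi)L_m(\sigma)$. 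The same argument applied on a product of precompact sets $V=V_0\times\cdots\times V_{k+m}$ (with $V_0$ playing the role of the common first factor) gives the localized bound $L_{k+m}(\pi\smile\sigma;V)\lesssim L_k(\pi;V')L_m(\sigma;V'')$ needed below.

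Next, sequential continuity. Since $\smile$ is bilinear it suffices to show: if $\pi^n\to0$ in $\Poly^k(X)$ and $(\sigma^n)$ is a fixed convergent (hence, after the usual reduction, eventually constant or bounded) sequence in $\Poly^m(X)$, then $\pi^n\smile\sigma^n\to0$, and symmetrically. Fix a compact box $V=V_0\times\cdots\times V_{k+m}$ and write $V'=V_0\times V_1\times\cdots\times V_k$, $V''=V_0\times V_{k+1}\times\cdots\times V_{k+m}$. Choose representations $\pi^n=\sum_j\pi_0^{j,n}\otimes\cdots\otimes\pi_k^{j,n}$ and $\sigma^n=\sum_i\sigma_0^{i,n}\otimes\cdots\otimes\sigma_m^{i,n}$ satisfying (1) and (2) of Definition \ref{conv} relative to $V'$ and $V''$ respectively; then the double-indexed reshuffled family represents $\pi^n\smile\sigma^n$. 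Condition (1) for this family holds because $\sup_n L(\pi_0^{j,n}\sigma_0^{i,n})L(\pi_1^{j,n})\cdots L(\sigma_m^{i,n})\le\big(\sup_nL(\pi_0^{j,n})\cdots L(\pi_k^{j,n})\big)\big(\sup_nL(\sigma_0^{i,n})\cdots L(\sigma_m^{i,n})\big)$ and the product of the two summable majorants is summable over $(i,j)$. Condition (2) holds because
\[
\sum_{i,j}\|\pi_0^{j,n}\sigma_0^{i,n}|_{V_0}\|_\infty\|\pi_1^{j,n}|_{V_1}\|_\infty\cdots\|\sigma_m^{i,n}|_{V_{k+m}}\|_\infty
\le\Big(\sum_j\|\pi_0^{j,n}|_{V_0}\|_\infty\cdots\Big)\Big(\sum_i\|\sigma_0^{i,n}|_{V_0}\|_\infty\cdots\Big),
\]
and the first factor tends to $0$ while the second stays bounded. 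Hence $\pi^n\smile\sigma^n\to0$ in $\Poly^{k+m}(X)$, and by symmetry the map is sequentially continuous in each variable; bilinearity then upgrades this to joint sequential continuity of the bilinear map.

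The only genuinely delicate point is the bookkeeping with the common first slot: in $\pi\smile\sigma$ the zeroth variables of $\pi$ and $\sigma$ are identified, so one must keep the factors $\|\pi_0^{j,n}|_{V_0}\|_\infty$ and $\|\sigma_0^{i,n}|_{V_0}\|_\infty$ attached to the same compact set $V_0$ and use $\|fg\|_\infty\le\|f\|_\infty\|g\|_\infty$ there; everything else is the routine ``Fubini for summable double series'' manipulation that already appears in the proof of Lemma \ref{extdcont}, so I would state the generator computation, the two displayed inequalities above, and refer to that proof for the remaining details rather than reproducing them.
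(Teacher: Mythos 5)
Your argument is correct and is essentially the computation the paper has in mind: the paper omits the proof of this lemma, saying only that it follows from Definition \ref{conv} and straightforward estimates after observing (in the paragraph before the statement) that the reshuffled double-indexed family $(\pi_0^j\sigma_0^i,\pi_1^j,\ldots,\pi_k^j,\sigma_1^i,\ldots,\sigma_m^i)_{i,j}$ represents $\pi\smile\sigma$, and your two displayed inequalities supply precisely those estimates. Two small repairs that do not affect the conclusion: with $L(f)=\max\{\Lip(f),\|f\|_\infty\}$ the product rule only gives $L(\pi_0\sigma_0)\le 2\,L(\pi_0)L(\sigma_0)$, so your bound holds with an extra factor $2$ (harmless, since only boundedness is claimed), and since Definition \ref{conv} furnishes representations of the difference $\sigma^n-\sigma$ rather than of $\sigma^n$ itself, the uniformly bounded representation of $\sigma^n$ that your second factor requires should be obtained by adjoining a fixed representation of $\sigma$ to a representation of $\sigma^n-\sigma$ satisfying condition (1).
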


The collection $\{ \smile:\Poly^k(U)\times\Poly^m(U)\to \Poly^{k+m}(U)\}$ is a bilinear presheaf homomorphism, and we note that
$$Q_U^{k+m}\circ\smile\circ(\jmath_U^k\times\jmath_U^m):\LIP_\infty(U)^{k+1}\times\LIP_\infty(U)^{m+1}\to \overline\Poly^{k+m}(U)$$
satisfies the bi-linear analogue of \eqref{eq:locality} for each open $U\subset X$. By Lemma \ref{lem:cupprodcont} and Remark \ref{rmk:bilincont} (see also Remark \ref{rmk:bilinear}) we obtain bilinear maps
\[
\smile:\overline\G^k(X)\times\overline\G^m(X)\to\overline\G^{k+m}(X)
\]
and
\[
\smile:\G^k(X)\times\G^m(X)\to\G^{k+m}(X),
\]
called the \emph{cup product} of polylipschitz forms and sections, respectively. Note that
\[
\spt(\bar\omega\smile\bar\sigma)\subset \spt \bar\omega\cap \spt\bar\sigma,\quad \overline\Gamma^k(X)\smile\overline\Gamma^m(X)\subset \overline\Gamma^{k+m}(X),
\]
see Remark \ref{rmk:bilinear}. We record the following standard identities for cup products, pull-backs and the exterior derivative; cf. \cite{mas78,mas67}.

\begin{lemma}\label{basic}
	Let $X$ and $Y$ be metric spaces, $f:X\to Y$ a Lipschitz map. Let $\bar\alpha\in\overline\G^k(Y)$ and $\bar\beta\in\overline\G^m(Y)$. Then 
	\begin{itemize}
		\item[(a)] $\bar f^\#(\bar\alpha\smile\bar\beta)=(\bar f^\#\bar \alpha)\smile (\bar f^\#\bar\beta),$ and
		\item[(b)] $\bar  d(\bar\alpha\smile\bar\beta)=\bar d\bar\alpha\smile\bar\beta+(-1)^k\bar\alpha\smile \bar d\bar\beta.$
	\end{itemize}
The same identities hold for $\alpha\in\G^k(Y)$ and $\beta\in\G^m(Y)$.
\end{lemma}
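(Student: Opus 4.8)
The plan is to check both identities pointwise on germs, reducing them to identities between (homogeneous) polylipschitz functions over a single open set, and then to reduce those to identities on elementary tensors, where they become the standard Alexander--Spanier formulas.

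Recall first that $\bar f^\#$, $\smile$ and $\bar d$, as well as their counterparts on $\G^k$, are the linear, resp.\ bilinear, maps associated to presheaf (co)homomorphisms, and that by Proposition \ref{prop:assocproperties} and its bilinear analogue (Remark \ref{rmk:bilinear}) such a map evaluates on a section at a point $x$ by applying the relevant presheaf map to any local representative of the germ at $x$. Thus, given $\bar\alpha,\bar\beta$ and $x\in X$, I would choose an open set $W$ with $x\in W$ (resp.\ $f(x)\in W$) and representatives $\bar\pi\in\overline\Poly^k(W)$, $\bar\sigma\in\overline\Poly^m(W)$ of the germs of $\bar\alpha,\bar\beta$ there. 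Then the two sides of (a) at $x$ are the germs at $x$ of $\bar f^\#_W(\bar\pi\smile\bar\sigma)$ and of $(\bar f^\#_W\bar\pi)\smile(\bar f^\#_W\bar\sigma)$, and the two sides of (b) at $x$ are the germs at $x$ of $\bar d^{k+m}_W(\bar\pi\smile\bar\sigma)$ and of $\bar d^k_W\bar\pi\smile\bar\sigma+(-1)^k\bar\pi\smile\bar d^m_W\bar\sigma$. Hence it suffices to prove (a) and (b) for homogeneous polylipschitz functions over $f^{-1}W$, resp.\ $W$.

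Next I would drop the word ``homogeneous''. By construction the quotient maps $Q$ intertwine the homogeneous operators with the non-homogeneous ones, namely $\bar f^\#_U\circ Q_U=Q_{f^{-1}U}\circ f^\#_U$, $Q_U\circ\smile=\smile\circ(Q_U\times Q_U)$ and $\bar d_U\circ Q_U=Q_U\circ d_U$, and each $Q_U$ is onto; so the $\overline\Poly$-identities follow from $f^\#_W(\pi\smile\sigma)=(f^\#_W\pi)\smile(f^\#_W\sigma)$ and $d_W(\pi\smile\sigma)=d_W\pi\smile\sigma+(-1)^k\pi\smile d_W\sigma$ in $\Poly^\ast(W)$ for arbitrary $\pi\in\Poly^k(W)$, $\sigma\in\Poly^m(W)$. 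Since the operators on $\G^k$ are by definition those associated to $\{\Poly^k(U)\}$, the same reduction simultaneously proves the statement for $\alpha\in\G^k(Y)$ and $\beta\in\G^m(Y)$.

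Finally I would argue in $\Poly^\ast(W)$. Identity (a) is immediate from the pointwise definitions $f^\#_W\tau=\tau\circ(f\times\cdots\times f)$ and $(\pi\smile\sigma)(x_0,\dots,x_{k+m})=\pi(x_0,\dots,x_k)\,\sigma(x_0,x_{k+1},\dots,x_{k+m})$: both sides evaluate at $(x_0,\dots,x_{k+m})$ to $\pi(fx_0,\dots,fx_k)\,\sigma(fx_0,fx_{k+1},\dots,fx_{k+m})$. For (b), both sides are bounded bilinear maps $\Poly^k(W)\times\Poly^m(W)\to\Poly^{k+m+1}(W)$ by Lemmas \ref{extdcont} and \ref{lem:cupprodcont}, and the algebraic tensor product is linearly dense in each projective tensor product, so it suffices to verify (b) when $\pi=\pi_0\otimes\cdots\otimes\pi_k$ and $\sigma=\sigma_0\otimes\cdots\otimes\sigma_m$. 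For such elementary tensors $\pi\smile\sigma=(\pi_0\sigma_0)\otimes\pi_1\otimes\cdots\otimes\pi_k\otimes\sigma_1\otimes\cdots\otimes\sigma_m$ and $d$ is given by \eqref{eq:simplebdry}, so (b) comes down to the standard Leibniz computation for the Alexander--Spanier coboundary and cup product; cf.\ \cite{mas78,mas67}. I expect this last step -- matching the terms of \eqref{eq:simplebdry} applied to $\pi\smile\sigma$ against those of $d\pi\smile\sigma+(-1)^k\pi\smile d\sigma$ with the correct signs -- to be the only genuine obstacle; the two preceding reductions are formal, if notationally heavy.
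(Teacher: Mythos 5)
Your reductions are sound: evaluating the associated (bi)linear maps on germs via local representatives, using that $Q$ intertwines $f^\#,\ d,\ \smile$ with their homogeneous counterparts, and passing to elementary tensors by boundedness and density are all legitimate, and part (a) does follow from the pointwise identity for $f^\#$ and $\smile$. Note also that the paper gives no proof of this lemma (it only cites Massey), so your outline is the natural route to compare against.

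The genuine gap is precisely the step you defer. The cup product here is \emph{not} the Alexander--Spanier one: the factors are coupled through $x_0$, not through the last variable of the first factor, and with this convention the insertion terms of \eqref{eq:simplebdry} do not pair off. For $\pi=\pi_0\otimes\cdots\otimes\pi_k$ and $\sigma=\sigma_0\otimes\cdots\otimes\sigma_m$ a direct bookkeeping gives
\begin{align*}
d(\pi\smile\sigma)-d\pi\smile\sigma-(-1)^k\pi\smile d\sigma
={}&1\otimes(\pi_0\sigma_0)\otimes\pi_1\otimes\cdots\otimes\sigma_m
-\sigma_0\otimes\pi_0\otimes\pi_1\otimes\cdots\otimes\sigma_m\\
&+(-1)^k(\pi_0\sigma_0)\otimes\pi_1\otimes\cdots\otimes\pi_k\otimes 1\otimes\sigma_1\otimes\cdots\otimes\sigma_m\\
&-(-1)^k\pi_0\otimes\pi_1\otimes\cdots\otimes\pi_k\otimes\sigma_0\otimes\sigma_1\otimes\cdots\otimes\sigma_m,
\end{align*}
which is not zero in $\Poly^{k+m+1}$: already for $k=m=0$ it is the function $(x_0,x_1)\mapsto(\pi_0(x_1)-\pi_0(x_0))(\sigma_0(x_1)-\sigma_0(x_0))$, whose germs are nonzero. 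So (b) cannot be verified on elementary tensors for polylipschitz sections, and your reduction for the non-homogeneous case cannot close. Passing to $\overline\Poly$ does not rescue the computation either: $Q$ kills only the third term above, and for $k=m=0$, $\pi_0=\sigma_0$ the surviving difference $1\otimes\overline{\pi_0^2}-2\,\pi_0\otimes\overline{\pi_0}$ is nonzero in $\LIP_\infty\hat\otimes_\pi\overline\LIP_\infty$ whenever $\overline{\pi_0^2}$ and $\overline{\pi_0}$ are linearly independent (e.g.\ $\pi_0(t)=t$ on an interval; separate with $\phi\otimes\psi$ via Hahn--Banach) --- a projective tensor product carries no Leibniz-type relations. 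The cochain-level Leibniz identity you invoke holds for the standard cup product sharing $x_k$, but that product does not descend to the homogeneous quotient, which is why the paper couples through $x_0$; the missing relations reappear only after pairing with a metric current (via its derivation and alternation properties), not at the level of forms. So the step you call ``the only genuine obstacle'' is a real obstruction rather than a routine verification, and the literal statement of (b) itself deserves scrutiny for the same reason.
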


\section{Metric currents as the dual of polylipschitz forms}\label{sec:currentdual}
In this section $X$ is a locally compact metric space and $k\in\N$. We prove that metric currents act sequentially continuously on the space of polylipschitz forms. Recall the natural maps (\ref{eq:jmath}), (\ref{eq:barjmath}) and (\ref{eq:gammaembed}) and denote
\begin{equation}
\label{eq:iota}
\iota=\iota_X^k:\sD^k(X)\to \Gamma_c^k(X),\quad \iota:=\gamma\circ\jmath|_{\sD^k(X)},
\end{equation}
and
\begin{equation}
\bar\iota=\bar\iota_X^k:\sD^k(X)\to \overline\Gamma_c^k(X),\quad \bar\iota:=\bar\gamma\circ\bar\jmath|_{\sD^k(X)},
\end{equation}
It follows from the respective definitions of sequential convergence that $\iota$ and $\bar\iota$ are sequentially continuous.
\begin{theorem}\label{ext1}
	For each $T\in\sD_k(X)$, there exists a unique sequentially continuous linear map $\widehat T:\overline\Gamma_c^k(X)\to \R$ satisfying $T=\widehat T\circ\bar\iota$.
\end{theorem}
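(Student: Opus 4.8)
The goal is to promote a metric current $T\in\sD_k(X)$, a priori only a functional on $(k{+}1)$-tuples of Lipschitz functions, to a functional $\widehat T$ on the space $\overline\Gamma_c^k(X)$ of compactly supported polylipschitz forms, in such a way that $\widehat T\circ\bar\iota = T$. The natural strategy is to proceed in three stages: (i) define $\widehat T$ on the image $\bar\jmath(\sD^k(X))\subset\overline\Poly^k(X)$ of the canonical $(k{+}1)$-linear map, using the universal property of the projective tensor product; (ii) pass from homogeneous polylipschitz \emph{functions} to polylipschitz \emph{forms} (i.e.\ sections of the \'etal\'e space $\overline\Polys^k(X)$) by gluing local data via an overlap-compatible representation, available by Proposition~\ref{prop:polyfine} and Lemma~\ref{lem:overlap}; (iii) verify well-definedness (independence of the chosen representation and cover), linearity, sequential continuity, and finally uniqueness.

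\textbf{Step (i): the local functional.} Fix a compact set $K\subset X$ and consider the $(k{+}1)$-linear map $\sD^k(X)\to\R$ restricted to tuples $(\pi_0,\dots,\pi_k)$ with $\spt\pi_0\subset K$; by the bound \eqref{eq:bound} this is a bounded $(k{+}1)$-linear functional on $\LIP_K(X)\times\LIP_\infty(X)^k$, and by the locality condition for metric currents it is \emph{local} in the sense of Definition~\ref{def:locality}. Hence Proposition~\ref{prop:useful} applies: $T$ descends to a bounded linear functional $\overline{T}_K$ on the homogeneous polylipschitz space $\overline\Poly^k$ built over $K$ (or, after extension via Lemma~\ref{ext}, on the subspace of $\overline\Poly^k(U)$ of elements supported in $K$ for a precompact open $U$). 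Moreover, since $T$ is sequentially continuous on $\sD^k(X)$ and $\jmath$, $\bar\jmath$ are sequentially continuous, part~(3) of Proposition~\ref{prop:useful} (together with the definition of convergence in $\Poly^k$ and $\overline\Poly^k$) gives that $\overline{T}_K$ is sequentially continuous in the sense of Definition~\ref{hconv}.

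\textbf{Step (ii): gluing to a functional on forms.} Given $\omega\in\overline\Gamma_c^k(X)$ with $\spt\omega\subset K$, choose by Proposition~\ref{prop:polyfine}, Lemma~\ref{lem:overlap} and Remark~\ref{rmk:nicecover} a locally finite precompact open cover $\cU$ of $X$ and an overlap-compatible collection $\{\bar\pi_U\}_{U\in\cU}$ representing $\omega$; subordinate to $\cU$ pick a Lipschitz partition of unity $\{\varphi_U\}$. Since $\spt\omega$ is compact, only finitely many $U\in\cU$ meet it, and I would \emph{define}
\begin{equation}\label{eq:defhatT}
\widehat T(\omega):=\sum_{U\in\cU}\overline{T}(\varphi_U\cdot\bar\pi_U),
\end{equation}
where $\varphi_U\cdot\bar\pi_U\in\overline\Poly^k(X)$ denotes the (extension to $X$ of the) homogeneous polylipschitz function obtained by multiplying the first tensor slot by $\varphi_U$ — compactly supported in $U$, hence in a fixed compact set, so $\overline{T}$ applies. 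This mirrors exactly the partition-of-unity mechanism used to prove finiteness of the presheaves. The main work is to check that \eqref{eq:defhatT} does not depend on the choices: using the overlap condition \eqref{overlap} and the fact that $\overline{T}$ is local and linear, two such expressions for the same $\omega$ (even relative to a common refinement, which suffices by Remark~\ref{rmk:nicecover}) differ by telescoping sums of terms $\overline{T}(\varphi_U\varphi_V(\bar\pi_U-\bar\pi_V))$ supported on overlaps, which vanish because $\bar\rho_{U\cap V,U}(\bar\pi_U)=\bar\rho_{U\cap V,V}(\bar\pi_V)$ and $\overline{T}$ factors through restriction to the support of the first argument.

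\textbf{Step (iii): linearity, continuity, compatibility, uniqueness.} Linearity of $\widehat T$ in $\omega$ follows from \eqref{eq:defhatT} once one may use a common cover and partition of unity for $\omega_1,\omega_2$ (again legitimate by refinement). For the compatibility $\widehat T\circ\bar\iota=T$: given $(\pi_0,\dots,\pi_k)\in\sD^k(X)$, the form $\bar\iota(\pi_0,\dots,\pi_k)=\bar\gamma(\bar\jmath(\pi_0,\dots,\pi_k))$ is represented by the single global section $\bar\jmath(\pi_0,\dots,\pi_k)$; choosing a partition of unity and using $\sum_U\varphi_U=1$ on $\spt\pi_0$ plus linearity of $\overline{T}$ collapses \eqref{eq:defhatT} to $\overline{T}(\bar\jmath(\pi_0,\dots,\pi_k))=T(\pi_0,\dots,\pi_k)$. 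Sequential continuity: if $\omega_n\to\omega$ in $\overline\Gamma_c^k(X)$, Definition~\ref{def:polyformconv} furnishes a fixed compact $K$, a fixed cover $\cU$, and overlap-compatible representations $\{\bar\pi_U^n\}$ of $\omega_n-\omega$ with $\bar\pi_U^n\to0$ in $\overline\Poly^k(U)$ for each $U$; since $\varphi_U\cdot(\cdot)$ is sequentially continuous $\overline\Poly^k(U)\to\overline\Poly^k(X)$ (it is the extension of a bounded operation, cf.\ the fineness proof) and $\overline{T}$ is sequentially continuous, each of the finitely many terms $\overline{T}(\varphi_U\cdot\bar\pi_U^n)$ tends to $0$. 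Finally, uniqueness is immediate: by the definition of convergence in $\overline\Gamma_c^k(X)$, the set $\bar\iota(\sD^k(X))$ together with local data and partitions of unity sequentially generates $\overline\Gamma_c^k(X)$ in the sense that every $\omega$ is a finite sum $\sum_U \bar\gamma(\varphi_U\cdot\bar\pi_U)$ of elements of the form $\bar\gamma(\overline\Poly^k(X))$, and each such element is a limit of elements of $\bar\iota(\sD^k(X))$ (by the representation theorem for projective tensor products and Lemma~\ref{ext}); a sequentially continuous functional is thus determined by its values on $\bar\iota(\sD^k(X))$, i.e.\ by $T$.

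\textbf{Main obstacle.} The delicate point is Step~(ii): showing \eqref{eq:defhatT} is well defined independently of the overlap-compatible representation, the cover, and the partition of unity. This requires combining the overlap condition \eqref{overlap} with the locality built into $\overline{T}$ (via Proposition~\ref{prop:useful}) carefully, keeping track of which compact set contains which support so that $\overline{T}$ is always legitimately applied; the refinement-stability remarks (Remark~\ref{rmk:nicecover}) are what make it possible to reduce any two choices to a common one. Everything else — linearity, the identity $\widehat T\circ\bar\iota=T$, sequential continuity, and uniqueness — is then a fairly mechanical unwinding of definitions.
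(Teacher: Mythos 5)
Your overall architecture coincides with the paper's (localize $T$ through a partition of unity, glue along an overlap-compatible representation, check well-definedness via refinements, recover uniqueness by localizing an arbitrary extension), but there is a genuine gap at exactly the point you treat as formal: the sequential continuity of the localized functional $\overline T_K$ (the paper's $\overline{T_\varphi^U}$ of Lemma \ref{lem:need}). Proposition \ref{prop:useful}(3) does not give this; it only transfers sequential continuity from the extension $\overline A\colon \Poly^k(U)\to\R$ to the homogeneous quotient, and the sequential continuity of $\overline A$ itself is precisely what must be proved. The universal property of the projective tensor product together with the bound \eqref{eq:bound} yields only norm-boundedness, and convergence in the sense of Definition \ref{conv} is strictly weaker than norm convergence: $\pi^n\to 0$ only requires representations whose products of sup-norms over compacta tend to zero while the Lipschitz constants stay bounded; no individual factor need converge, and the projective norms $L_k(\pi^n)$ need not tend to zero. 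A norm-bounded functional can fail to be continuous for this notion, and the sequential continuity of $T$ on $\sD^k(X)$ cannot simply be applied termwise to an infinite representation. Closing this gap is the technical heart of the paper's proof of Lemma \ref{lem:need}: one renormalizes each elementary tensor by geometric means of the $L$-norms, extracts limits via Arzel\`a--Ascoli and a diagonal argument, uses the locality of $T$ to show that for each fixed $j$ the terms $T(\varphi\pi_0^{j,n},\ldots,\pi_k^{j,n})$ tend to zero (some limiting factor vanishes on $K$), and concludes by dominated convergence over $j$. Without this ingredient your Step (iii) claim that each term $\overline T(\varphi_U\cdot\bar\pi_U^n)\to 0$, and hence the sequential continuity of $\widehat T$, has no support.

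Two smaller points. First, the step you single out as the main obstacle (independence of your defining sum from the representation, cover and partition of unity) is in the paper the comparatively routine part: it follows from the functorial identities of Remark \ref{rmk:functorial} applied over a common refinement with a triple partition of unity, much as you sketch. Second, your uniqueness argument via sequential density of the linear span of $\bar\iota(\sD^k(X))$ is viable in outline, but it needs the verification that partial sums of a representation of $\bar\pi_U$ (multiplied by $\varphi_U$) converge to $\bar\gamma(\varphi_U\smile\bar\pi_U)$ in the sense of Definition \ref{def:polyformconv}; the paper instead localizes the competing functional $A$ by cup products with $\varphi_U$ and invokes the uniqueness of the bounded extension on each $\overline\Poly^k(U)$, which avoids any density claim at the level of sections.
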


We use an auxiliary result for the proof of Theorem \ref{ext1}. For the next lemma, let $T\in \sD_k(X)$ be a metric current, $\varphi\in\LIP_c(X)$, and $U$ an open set containing $K:=\spt\varphi$. We define the $(k+1)$-linear map
\begin{equation}\label{eq:map}
T_\varphi^U:\LIP_\infty(U)^{k+1}\to \R,\quad (\pi_0,\ldots,\pi_k)\mapsto T(\varphi\pi_0,\widetilde\pi_1,\ldots,\widetilde\pi_k)
\end{equation}
for any extension $\widetilde\pi_l\in \LIP_\infty(X)$ of $\pi_l$. By (\ref{eq:bound}) and the discussion preceding it we have that the map $T_U$ is well-defined and bounded, with the bound
\begin{equation}\label{bound}
|T_\varphi^U(\pi_0,\ldots,\pi_k)|\le 2CL(\varphi)L(\pi_0|_K)\cdots L(\pi_k|_K)
\end{equation}
for $(\pi_0,\ldots,\pi_k)\in\LIP_\infty(U)^{k+1}$.

\begin{lemma}\label{lem:need}
The bounded $(k+1)$-linear map $T_\varphi^U:\LIP_\infty(U)^{k+1}\to \R$ in (\ref{eq:map})
descends to a unique sequentially continuous bounded linear map $\overline{T_\varphi^U}:\overline\Poly^k(U)\to \R$ satisfying $\overline{T_\varphi^{U}}\circ\bar\jmath=T_\varphi^U$.
\end{lemma}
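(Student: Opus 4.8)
The plan is to apply Proposition \ref{prop:useful} to the bounded $(k+1)$-linear map $T_\varphi^U$, so the first task is to verify that $T_\varphi^U$ is \emph{local} in the sense of Definition \ref{def:locality}. Suppose $(\pi_0,\ldots,\pi_k)\in\LIP_\infty(U)^{k+1}$ with one of $\pi_1,\ldots,\pi_k$ constant on $U$. Pick extensions $\widetilde\pi_l\in\LIP_\infty(X)$; since the relevant $\pi_l$ is already constant on all of $U\supset K=\spt\varphi=\spt(\varphi\pi_0)$, we may in fact choose its extension to be globally constant, or simply note that $\widetilde\pi_l$ is constant on a neighborhood of $\spt(\varphi\pi_0)$. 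The locality condition for the metric current $T$ (as recalled after the definition of $\sD_k(X)$, using \cite[(2.5)]{lan11} so that no neighborhood is needed) then gives $T(\varphi\pi_0,\widetilde\pi_1,\ldots,\widetilde\pi_k)=0$, i.e. $T_\varphi^U(\pi_0,\ldots,\pi_k)=0$. Hence $T_\varphi^U$ is local.

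Next I would invoke Proposition \ref{prop:useful}: since $T_\varphi^U:\LIP_\infty(U)^{k+1}\to\R$ is bounded (by \eqref{bound}) and local, parts (1) and (2) of that proposition yield a unique bounded linear map $\overline{T_\varphi^U}:\overline\Poly^k(U)\to\R$ with $\overline{T_\varphi^U}\circ\bar\jmath_U=T_\varphi^U$ (here using $\bar\jmath_U=\jmath_U\circ Q_U$ and that $T_\varphi^U$ factors through $\id\times q_U\times\cdots\times q_U$). Uniqueness is immediate because the image of $\bar\jmath_U$ spans a dense subspace of $\overline\Poly^k(U)$ and $\overline{T_\varphi^U}$ is continuous.

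It remains to prove sequential continuity of $\overline{T_\varphi^U}$ in the sense of Definition \ref{hconv}, and by part (3) of Proposition \ref{prop:useful} it suffices to show that the associated map $\overline{T_\varphi^U}$ on $\Poly^k(U)$ — equivalently, the unique bounded linear map $\widetilde T:\Poly^k(U)\to\R$ with $\widetilde T\circ\jmath_U=T_\varphi^U$ — is sequentially continuous in the sense of Definition \ref{conv}. This is the main point of the argument. Take $\pi^n\to 0$ in $\Poly^k(U)$; I would apply Definition \ref{conv} with the compact set $V=V_0\times\cdots\times V_k$ where each $V_l=\overline{U'}$ for a fixed precompact open $U'$ with $K\subset U'\subset\overline{U'}\subset U$, obtaining representations $\pi^n=\sum_j\pi_0^{j,n}\otimes\cdots\otimes\pi_k^{j,n}$ satisfying conditions (1) and (2) there. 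By continuity and linearity of $\widetilde T$ together with \eqref{bound},
\begin{equation*}
|\widetilde T(\pi^n)|=\Bigl|\sum_j T_\varphi^U(\pi_0^{j,n},\ldots,\pi_k^{j,n})\Bigr|\le 2CL(\varphi)\sum_j L(\pi_0^{j,n}|_K)\cdots L(\pi_k^{j,n}|_K).
\end{equation*}
The obstacle is that condition (2) of Definition \ref{conv} only controls the sup-norms $\|\pi_l^{j,n}|_{V_l}\|_\infty$, not the full Lipschitz norms $L(\pi_l^{j,n}|_K)$; one cannot directly replace $L$ by $\|\cdot\|_\infty$. The resolution is to use the standard trick for metric currents (cf. the bound \eqref{eq:bound} and \cite{lan11,amb00}): insert a fixed Lipschitz cutoff $\psi\in\LIP_c(U)$ with $\psi\equiv 1$ on $U'$, write $\varphi\pi_0^{j,n}=\varphi\pi_0^{j,n}$ and replace each $\pi_l^{j,n}$ ($l\ge1$) by $\psi\pi_l^{j,n}$, which does not change the value of $T$ by locality, and then estimate $L(\psi\pi_l^{j,n})\lesssim L(\psi)(L(\pi_l^{j,n})\cdot\|\pi_l^{j,n}|_{\spt\psi}\|_\infty)^{?}$ — more precisely, split each term using the multilinear ``telescoping'' identity so that in every summand at least one factor appears as a sup-norm $\|\pi_l^{j,n}|_{V_l}\|_\infty$ and the remaining factors are bounded by $\sup_n L(\pi_l^{j,n})$, which is summable by (1). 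Then (1) provides a summable dominating sequence independent of $n$, and (2) forces each summand (after the split) to tend to $0$, so dominated convergence for series gives $\widetilde T(\pi^n)\to 0$. Having established sequential continuity on $\Poly^k(U)$, Proposition \ref{prop:useful}(3) transfers it to $\overline{T_\varphi^U}$ on $\overline\Poly^k(U)$, completing the proof.
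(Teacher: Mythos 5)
Your first two steps (verifying locality of $T_\varphi^U$ and invoking Proposition \ref{prop:useful} to get the descent, the factorization through $\bar\jmath$, uniqueness, and the reduction to sequential continuity of the induced map on $\Poly^k(U)$) agree with the paper. The gap is in the main analytic step, and you have in fact flagged it yourself: after writing $|\widetilde T(\pi^n)|\le 2CL(\varphi)\sum_j L(\pi_0^{j,n}|_K)\cdots L(\pi_k^{j,n}|_K)$ you need to exploit that only the \emph{sup-norms} in Definition \ref{conv}(2) tend to zero, while the Lipschitz norms are merely bounded. Your proposed fix --- inserting a cutoff $\psi$ and ``splitting each term so that in every summand at least one factor appears as a sup-norm and the remaining factors are bounded by $\sup_n L(\pi_l^{j,n})$'' --- does not go through. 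First, for a general current $T\in\sD_k(X)$ the only available estimate is \eqref{eq:bound}/\eqref{bound}, in which \emph{every} factor enters through its Lipschitz norm; an estimate with one factor in sup-norm and the rest in Lipschitz norm is a mass-type bound and is simply not available outside $M_{k,\loc}(X)$. The cutoff replacement $\pi_l\mapsto\psi\pi_l$ is legitimate (locality plus multilinearity), but the product rule gives $L(\psi\pi_l)\lesssim L(\psi)\|\pi_l|_{\spt\psi}\|_\infty+ L(\pi_l|_{\spt\psi})$, a \emph{sum}, so plugging this into \eqref{bound} leaves a term with full Lipschitz norms that does not become small as $n\to\infty$; no algebraic ``telescoping'' can convert sup-norm smallness into smallness of the bound. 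Second, your domination claim is also unjustified: condition (1) of Definition \ref{conv} controls $\sum_j\sup_n L(\pi_0^{j,n})\cdots L(\pi_k^{j,n})$, i.e.\ the supremum of the \emph{product}; for fixed $j$ and $l$ the individual quantity $\sup_n L(\pi_l^{j,n})$ may well be infinite, so ``the remaining factors are bounded by $\sup_n L(\pi_l^{j,n})$'' cannot be used.

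The paper's proof shows why a purely quantitative argument is avoided: to get $T(\varphi\pi_0^{j,n},\ldots,\pi_k^{j,n})\to 0$ for fixed $j$ one must use the continuity axiom of $T$, which requires actual convergence of all arguments, not just boundedness. The paper first renormalizes, setting $\sigma_l^{j,n}=a_l^{j,n}\pi_l^{j,n}$ with $a_l^{j,n}$ chosen so that the tensor product (hence the value of $T$) is unchanged while each $L(\sigma_l^{j,n})\le t_j^{1/(k+1)}$ --- this repairs exactly the failure of individual Lipschitz bounds noted above. Then Arzel\`a--Ascoli plus a diagonal argument extracts a subsequence along which $\sigma_l^{j,n}\to\sigma_l^j$ in $\LIP_\infty(U)$ for all $j,l$; the continuity axiom gives convergence of the current values, the sup-norm condition forces some limit factor $\sigma_l^j$ to vanish on $K$, and locality then kills the limit. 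Finally dominated convergence in $j$ (with dominating sequence $Ct_j$) and a subsequence-of-subsequences argument finish the proof. If you want to salvage your write-up, replace the cutoff/telescoping paragraph by this normalization--compactness--locality argument; the rest of your outline can stay as is.
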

\begin{proof}
By the locality properties of currents, we have that
\[
T_\varphi^U(\pi_0,\ldots,1,\ldots,\pi_k)=0,\quad (\pi_0,\ldots,1,\ldots,\pi_k)\in \LIP_\infty(U)^{k+1}
\]
if one of the functions $\pi_l$ is the constant one for $l=1,\ldots, k$; cf.~Definition \ref{def:locality}. By Proposition \ref{prop:useful} $T_\varphi^U$ descends to a bounded linear map $\overline{T_\varphi^U}:\overline\Poly^k(U)\to \R$, satisfying $T_\varphi^U=\overline{T_\varphi^U}\circ\bar\jmath$, and the sequential continuity of $\overline{T_\varphi^U}$ is implied by the sequential continuity of the bounded linear map $\overline A:\Poly^k(U)\to \R$ for which $T_\varphi^U=\overline A\circ\jmath$. To prove sequential continuity of $\overline A$, suppose that the sequence $(\pi^n)_n$ converges to zero in $\Poly^k(U)$. It suffices to prove that each subsequence of $\{\overline A(\pi^n)\}_n$ has a further subsequence converging to zero.

Since $\pi^n\to 0$ in $\Poly^k(U)$ there are representatives $\displaystyle (\pi_0^{j,n},\ldots,\pi_k^{j,n})_j\in \Rep(\pi^n)$ satisfying 
	\[ 
	\sum_j^\infty\sup_n L(\pi_0^{j,n})\cdots L(\pi_k^{j,n})<\infty
	\]
	and
	\[
	\lim_{n\to\infty}\sum_j^\infty\|\pi_0^{j,n}|_K\|_\infty\cdots \|\pi_k^{j,n}|_K\|_\infty=0. 
	\] 
	For each $j\in\N$, we denote $\displaystyle t_j:=\sup_nL(\pi_0^{j,n})\cdots L(\pi_k^{j,n})$.
	We may assume $L(\pi_l^{j,n})\ne 0$ for all $l=0,\ldots,k$ and $j,n\in\N$. For each $j,n\in\N$ and $l=0,\ldots, k$, let 
	\[
	\sigma_l^{j,n}=a_l^{j,n}\pi_l^{j,n},
	\]
	where 
	\[
	a_l^{j,n}= \frac{\big(L(\pi_0^{j,n})\cdots L(\pi_k^{j,n})\big)^{1/(k+1)}}{L(\pi_l^{j,n})}.
	\]
	Then
	\begin{equation*}\label{key}
	\sigma_0^{j,n}\otimes\cdots\otimes\sigma_k^{j,n}=\pi_0^{j,n}\otimes\cdots\otimes\pi_k^{j,n}
	\end{equation*}
	which implies
	\begin{align}\label{1}
	T(\varphi\pi_0^{j,n},\pi_1^{j,n},\ldots,\pi_k^{j,n})&=\overline A(\pi_0^{j,n}\otimes\cdots\otimes\pi_k^{j,n})=\overline A(\sigma_0^{j,n}\otimes\cdots\otimes\sigma_k^{j,n})\nonumber\\
	&=T(\varphi\sigma_0^{j,n},\sigma_1^{j,n},\ldots,\sigma_k^{j,n}).
	\end{align}
	For each $j,n\in\N$ and $l=0,\ldots,k$, we have
	\begin{align*}
	L(\sigma_l^{j,n})&=[L(\pi_0^{j,n})\cdots L(\pi_k^{j,n})]^{1/(k+1)}\le t_j^{1/(k+1)}.
	\end{align*}
	By the Arzela-Ascoli theorem and a diagonal argument, there exists a subsequence and $\sigma_l^j\in\LIP_\infty(U)$ for which $\sigma_l^{j,n}\to \sigma_l^j$ in $\LIP_\infty(U)$ as $n\to\infty$, for each $j\in\N$ and $l=0,\ldots,k$. Thus
	\begin{equation}\label{2}
	\lim_{n\to\infty}T(\varphi\sigma_0^{j,n},\sigma_1^{j,n},\ldots,\sigma_k^{j,n})=T(\varphi\sigma_0^j,\sigma_1^j,\ldots,\sigma_k^j).
	\end{equation}
	Moreover, for fixed $j\in\N$, we have that
	\begin{align*}
	\lim_{n\to\infty}\|\sigma_0^{j,n}|_K\|_\infty\cdots \|\sigma_k^{j,n}|_K\|_\infty=\lim_{n\to\infty}\|\pi_0^{j,n}|_K\|_\infty\cdots \|\pi_k^{j,n}|_K\|_\infty=0
\end{align*}
and 
\begin{align*}
\|\sigma^{j,n}_l|_K\|_\infty=a_l^{j,n}\|\pi_l^{j,n}|_K\|_\infty\le t_j^{1/(k+1)}\textrm{ for all $n$ and }l=0,\ldots,k.
\end{align*}
	It follows that, up to passing to a further subsequence, there is, for each $j\in\N$, an index $l=0,\ldots,k$ for which $\displaystyle \lim_{n\to\infty}\|\sigma_l^{j,n}|_K\|_\infty=0$. Consequently, for each $j\in\N$, there exists $l=0,\ldots,k$ for  which
	\begin{equation}\label{3}
	\sigma_l^j|_K=0.
	\end{equation}
	The locality of $T$ together with (\ref{1}),(\ref{2}), and (\ref{3}) now implies that
	\begin{equation}\label{truekey}
	\lim_{n\to\infty}T(\varphi\pi_0^{j,n},\pi_1^{j,n},\ldots,\pi_k^{j,n})=0
	\end{equation}
	for each $j\in\N.$ The estimate (\ref{bound}) yields
	\[
	|T(\varphi\pi_0^{j,n},\pi_1^{j,n},\ldots,\pi_k^{j,n})|\le Ct_j 
        \]
        for each $j\in\N$ and for some constant $C>0$. The dominated convergence theorem now implies that
	\begin{align*}
	\lim_{n\to\infty}\overline A(\pi^n)=&\lim_{n\to\infty}\sum_j^\infty T(\pi_0^{j,n},\ldots,\pi_k^{j,n})
	=\sum_j^\infty\lim_{n\to\infty}  T(\pi_0^{j,n},\ldots,\pi_k^{j,n})=0.
	\end{align*}
This concludes the proof.
\end{proof}

\begin{remark}\label{rmk:functorial}
By the multilinearity of currents and the uniqueness in Lemma \ref{lem:need}, we observe the following functorial property: If $\varphi,\psi\in\LIP_c(X)$, $U\subset V$  are open sets, and $\spt\varphi\cup\spt\psi\subset U$, we have that
	\begin{align*}
	\overline{T_{\varphi+\psi}^U}=\overline{T_\varphi^U}+\overline{T_\psi^U}\textrm{ and } \overline{T_\varphi^V}=\overline{T_\varphi^U}\circ\bar \rho_{U,V}
	\end{align*}
	for each $T\in\sD_k(X)$.
\end{remark}

\begin{proof}[Proof of Theorem \ref{ext1}]
Let $T\in\sD_k(X)$. We define $\widehat T:\overline\Gamma_c^k(X)\to V$ as follows. Let $\omega\in\overline\Gamma_c^k(X)$ be overlap-compatible with $\{ \bar\pi_U\in\overline\Poly^k(U) \}_{U\in\mathcal U}$ for a locally finite precompact open cover; cf. Remark \ref{rmk:nicecover}. Let $\{ \varphi_U \}_\mathcal U$ be a Lipschitz partition of unity subordinate to $\mathcal U$ and set
\begin{equation}\label{eq:define}
\widehat T(\omega):=\sum_{U\in\mathcal U}\overline{T_{\varphi_U}^U}(\bar\pi_U),
\end{equation}
where $T_{\varphi_U}^U$ is as in Lemma \ref{lem:need}. Note that, since $\spt\omega$ is compact, only finitely many $\pi_U$ are nonzero and the sum has only finitely many nonzero terms.

We prove that $\widehat T$ is well-defined. Let $\{ \bar\sigma_V \}_{V\in\mathcal V}$ be overlap-compatible with $\omega$, and $\{\psi_V\}_{\mathcal V}$ a Lipschitz partition of unity subordinate to $\mathcal V$. Let $\mathcal W$ be a locally finite refinement of $\mathcal U\cap \mathcal V=\{ U\cap V: \ U\in\mathcal U,\ V\in \mathcal V\}$ with the property that, whenever $U\in\mathcal U,\ V\in\mathcal V,\ W\in\mathcal W$, and $W\subset U\cap V$, we have that
\[
\bar\rho_{W,U}(\bar\pi_U)=\bar\rho_{W,V}(\bar\sigma_V);
\]
cf. Remark \ref{rmk:nicecover} and the discussion after it. In particular
\begin{equation}\label{eq:ok}
\bar\rho_{U\cap V\cap W,U}(\bar\pi_U)=\bar\rho_{U\cap V\cap W,V}(\bar\sigma_V)
\end{equation}
for all $U\in\mathcal U,\ V\in\mathcal V$ and $W\in \mathcal W$.

Let $\{\theta_W\}_{\mathcal W}$ be a Lipschitz partition of unity subordinate to $\mathcal W$. By the functorial properties in Remark \ref{rmk:functorial}, and (\ref{eq:ok}), we have that
\begin{align*}
\sum_{U\in\mathcal U}\overline{T_{\varphi_U}^U}(\pi_U)&=\sum_{U\in\mathcal U}\sum_{V\in\mathcal V}\sum_{W\in\mathcal W}\overline{T_{\varphi_U\psi_V\theta_W}^U}(\bar\pi_U)\\
&=\sum_{U\in\mathcal U}\sum_{V\in\mathcal V}\sum_{W\in\mathcal W}\overline{T_{\varphi_U\psi_V\theta_W}^{U\cap V\cap W}}(\bar\rho_{U\cap V\cap W,U}(\bar\pi_U))\\
&=\sum_{U\in\mathcal U}\sum_{V\in\mathcal V}\sum_{W\in\mathcal W}\overline{T_{\varphi_U\psi_V\theta_W}^{U\cap V\cap W}}(\bar\rho_{U\cap V\cap W,V}(\bar\sigma_V))\\
&=\sum_{V\in\mathcal V}\overline{T_{\varphi_V}^V}(\bar\sigma_V).
\end{align*}
Note that all the sums above have only finitely many non-zero summands. This shows that $\widehat T$ is well-defined. 

To prove that $\widehat T:\overline\Gamma_c^k(X)\to\R$ is sequentially continuous, let $\omega_n\to 0$ in $\overline\Gamma_c^k(X)$. By Definition \ref{def:polyformconv} there is a compact set $K\subset X$, a locally finite precompact open cover $\mathcal U$ and $\{\bar\pi_U^n\}_\mathcal{U}$ overlap-compatible with $\omega_n$, for each $n\in\N$, so that $\spt\omega_n\subset K$ for each $n\in\N$ and
\[
\bar\pi_U^n\to 0\textrm{ in }\overline\Poly^k(U)
\]
as $n\to\infty$, for each $U\in\mathcal U$. 

Since $K$ is compact and $\mathcal U$ is locally finite, the collection $$\mathcal U_K:=\{ U\in\mathcal U: U\cap K\ne\varnothing\}$$ is finite. Moreover, for each $n\in\N$ and $U\notin\mathcal U_K$, we have that $\bar\pi_U^n=0$, since otherwise $\omega_n(x)=[\bar\pi_U^n]_x\ne 0$ for some $x\notin K$. It follows by Lemma \ref{lem:need} that 
\[
\lim_{n\to \infty}\widehat T(\omega_n)=\sum_{U\in\mathcal U_K}\lim_{n\to\infty}\overline{T_{\varphi_U}^U}(\bar\pi_U^n)=0.
\]

To prove the factorization, suppose $(\pi_0,\ldots,\pi_k)\in\sD^k(X)$ and $\{ \varphi_U \}_\mathcal{U}$ is a Lipschitz partition of unity subordinate to a locally finite precompact open cover $\mathcal U$. Then $\{\bar\pi_U:=\pi_0|_U\otimes\overline{\pi_1|_U}\otimes\cdots\otimes\overline{\pi_k|_U}\}_\mathcal U$ is overlap-compatible with $\bar\iota(\pi_0,\ldots,\pi_k)$. Proposition \ref{lem:need} and (\ref{eq:map}) now imply that
\[
\widehat T(\bar\imath(\pi_0,\ldots,\pi_k))=\sum_{U\in\mathcal U}\overline{T_{\varphi_U}^U}(\bar\pi_U)=\sum_{U\in\mathcal U}T(\varphi_U\pi_0,\pi_1,\ldots,\pi_k)=T(\pi_0,\ldots,\pi_k).
\]

For uniqueness, let $A:\overline\Gamma_c^k(X)\to\R$ be linear and sequentially continuous, and $A\circ\bar\iota=T$. Let $\omega\in\overline\Gamma_c^k(X)$ be overlap-compatible with $\{ \bar\pi_U \}_\mathcal U$, and let $\{\varphi_U\}_\mathcal U$ be a Lipschitz partition of unity subordinate to $\mathcal U$. We may assume that $\bar\pi_U\in\overline\Poly^k(X)$ for each $U\in\mathcal U$ by Lemma \ref{ext} and the surjectivity of \ref{eq:quotient}.

Note that $\varphi_U\smile\omega=\bar\gamma(\varphi_U\smile\bar\pi_U)$ and, by linearity,
\[
A(\omega)=\sum_{U\in\cU}A(\varphi_U\smile\omega)=\sum_{U\in\cU}A(\bar\gamma(\varphi_U\smile\bar\pi_U)).
\]
For each $U\in \mathcal U$, the linear map
\[
A_U:\overline\Poly^k(U)\to \R,\quad A_U=A(\bar\gamma(\varphi_U\smile\cdot))
\]
is bounded and satisfies
\begin{align*}
A_U\circ\bar\jmath(\pi_0,\ldots,\pi_k)&=A(\bar\gamma((\varphi_U\pi_0)\otimes\bar\pi_1\otimes\cdots\otimes\bar\pi_k ))\\
&=A(\bar\gamma\circ\bar\jmath(\varphi_U\pi_0,\pi_1,\ldots,\pi_k))=T(\varphi_U\pi_0,\pi_1,\ldots,\pi_k)
\end{align*}
for $(\pi_0,\ldots,\pi_k)\in \sD^k(X)$; cf. (\ref{eq:iota}). The uniqueness in Lemma \ref{lem:need} implies that $A_U=\overline{T_{\varphi_U}^U}$. Hence
\[
A(\omega)=\sum_{U\in\cU}A(\bar\gamma(\varphi_U\smile\bar\pi_U))=\sum_{U\in\mathcal U}\overline{T_{\varphi_U}^U}(\bar\pi_U)=\widehat T(\omega).
\]
The proof is complete.
\end{proof}

The next proposition establishes \eqref{main1:partial}.

\begin{proposition}\label{prop:bdry}
	Let $T\in\sD_{k}(X)$ be a $k$-current on $X$. Let $\widehat T:\overline\Gamma_c^{k}(X)\to\R$ and $\widehat{\partial T}:\overline\Gamma_{c}^{k-1}(X)\to\R$ be extensions of $T$ and $\partial T$, respectively. Then we have \[ \widehat{\partial T}(\omega)=\widehat T(\bar d\omega)\] for each $\omega\in \overline\Gamma_{c}^{k-1}(X)$.
\end{proposition}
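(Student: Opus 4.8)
The plan is to reduce the identity $\widehat{\partial T}(\omega)=\widehat T(\bar d\omega)$ to a local statement about the action of the building-block functionals $\overline{T_\varphi^U}$, and then to the already-known boundary formula $\partial T(\pi_0,\ldots,\pi_{k-1})=T(f,\pi_0,\ldots,\pi_{k-1})$ for metric currents. Both sides of the asserted identity are linear in $\omega$, and $\bar d$ is sequentially continuous (Lemma~\ref{extdcont} and the construction of $\bar d$ on sections), so it suffices to verify the identity on a class of $\omega$ that "generates" $\overline\Gamma_c^{k-1}(X)$ in a suitable sense; the cleanest such class is the image $\bar\iota(\sD^{k-1}(X))$, cut down by a partition of unity. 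Concretely, given $\omega\in\overline\Gamma_c^{k-1}(X)$, pick a locally finite precompact open cover $\cU$ with an overlap-compatible representation $\{\bar\pi_U\}_\cU$, a subordinate Lipschitz partition of unity $\{\varphi_U\}_\cU$, and observe (as in the uniqueness part of the proof of Theorem~\ref{ext1}) that $\varphi_U\smile\omega=\bar\gamma(\varphi_U\smile\bar\pi_U)$ and $\widehat T(\bar d\omega)=\sum_{U}\widehat T(\bar d(\varphi_U\smile\omega))$, $\widehat{\partial T}(\omega)=\sum_U\widehat{\partial T}(\varphi_U\smile\omega)$, with only finitely many nonzero terms since $\spt\omega$ is compact.

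First I would handle the case $\omega=\bar\iota(\pi_0,\ldots,\pi_{k-1})$ for $(\pi_0,\ldots,\pi_{k-1})\in\sD^{k-1}(X)$. On the left, by the factorization $\widehat{\partial T}\circ\bar\iota=\partial T$ and the definition of $\partial T$, we get $\widehat{\partial T}(\omega)=\partial T(\pi_0,\ldots,\pi_{k-1})=T(f,\pi_0,\ldots,\pi_{k-1})$ for any $f\in\LIP_c(X)$ with $f\equiv 1$ on $\spt\pi_0$. On the right, I use the formula~\eqref{eq:simplebdry} for $\bar d$ on a pure tensor: writing $\bar\pi=\pi_0\otimes\overline{\pi_1}\otimes\cdots\otimes\overline{\pi_{k-1}}$, we have
\[
\bar d\bar\pi=\sum_{l=0}^{k}(-1)^l\,\pi_0\otimes\overline{\pi_1}\otimes\cdots\otimes\overline{\pi_{l-1}}\otimes\overline{1}\otimes\overline{\pi_l}\otimes\cdots\otimes\overline{\pi_{k-1}}.
\]
Since $\overline{1}=0$ in $\overline\LIP_\infty(X)$, every term with $l\ge 1$ vanishes, so $\bar d\bar\pi=\bar 1\otimes\overline{\pi_0}\otimes\cdots\otimes\overline{\pi_{k-1}}$ as an element of $\overline\Poly^k$. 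Applying $\widehat T$ and using the factorization $\widehat T\circ\bar\iota=T$ together with a localizing function $f$ (to make $\bar 1\otimes\cdots$ land in $\bar\iota(\sD^k)$: replace the leading $1$ by $f$, which is legitimate since $\overline{T_\varphi^U}$ only sees $\varphi\pi_0$ and $f\varphi\pi_0=\varphi\pi_0$ when $f\equiv 1$ near $\spt\varphi$), we obtain $\widehat T(\bar d\omega)=T(f,\pi_0,\ldots,\pi_{k-1})$, matching the left-hand side. The bookkeeping that the pure-tensor computation extends to general $\bar\pi_U\in\overline\Poly^k(U)$ via a representing sequence and the dominated-convergence argument already used in Lemma~\ref{lem:need} is routine.

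The main obstacle is the passage from pure tensors and from $\bar\iota$-images to arbitrary polylipschitz forms, done carefully enough that the localizing function $f$ and the partition of unity interact correctly with the presheaf-cohomomorphism definition of $\bar d$ on sections. In particular one must check that $\bar d(\varphi_U\smile\omega)$ is itself overlap-compatible with a representation built from $\bar d(\varphi_U\smile\bar\pi_U)$ — this is exactly the statement $(\bar d\bar\omega)|_E=\bar d(\bar\omega|_E)$ and the restriction/support properties of $\bar d$ recorded after Lemma~\ref{extdcont}, combined with the Leibniz rule Lemma~\ref{basic}(b), $\bar d(\varphi_U\smile\omega)=\bar d\varphi_U\smile\omega+\varphi_U\smile\bar d\omega$ (here $\bar d\varphi_U=0$ since $\varphi_U\in\overline\Gamma^0$ is a function and $\bar d$ of a $0$-form kills the constant part — care is needed here, and it may be cleaner to argue instead that $\sum_U\varphi_U\smile\bar d\omega=\bar d\omega$ directly). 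Once the identity is established on the dense class $\bar\iota(\sD^{k-1}(X))$ localized by partitions of unity, sequential continuity of both $\widehat{\partial T}$ and $\widehat T\circ\bar d$ (the latter by Lemma~\ref{extdcont} and sequential continuity of $\widehat T$ from Theorem~\ref{ext1}), together with the fact that every $\omega\in\overline\Gamma_c^{k-1}(X)$ is a finite sum $\sum_U\varphi_U\smile\omega$ of such localized pieces, closes the argument.
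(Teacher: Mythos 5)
Your computation on pure tensors is exactly the paper's: by \eqref{eq:simplebdry}, every term of $d(\pi_0\otimes\cdots\otimes\pi_{k-1})$ that inserts the constant $1$ into a quotiented slot dies in $\overline\Poly^{k}$, so $\bar d\,\bar\iota(\pi_0,\ldots,\pi_{k-1})=\bar\iota(f,\pi_0,\ldots,\pi_{k-1})$ for any $f\in\LIP_c(X)$ with $f\equiv 1$ near $\spt\pi_0$, and hence $\widehat T\bigl(\bar d\,\bar\iota(\pi_0,\ldots,\pi_{k-1})\bigr)=T(f,\pi_0,\ldots,\pi_{k-1})=\partial T(\pi_0,\ldots,\pi_{k-1})$. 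Where you diverge is the globalization. The paper finishes in one stroke: $\widehat T\circ\bar d$ is a sequentially continuous linear functional on $\overline\Gamma_c^{k-1}(X)$ (you observe this), and by the pure-tensor computation it factors $\partial T$ through $\bar\iota$; the \emph{uniqueness} clause of Theorem \ref{ext1} then forces $\widehat T\circ\bar d=\widehat{\partial T}$. You have both hypotheses in hand but never invoke that uniqueness; instead you set out to re-prove it by hand, decomposing $\omega=\sum_U\varphi_U\smile\omega$ and expanding each localized piece in a representing series. That route can be completed (expand $\varphi_U\smile\bar\pi_U$ in a series of pure tensors with compactly supported first factors, and push $\widehat{\partial T}$, $\widehat T$ and $\bar d$ through the series using the bound \eqref{bound} and Lemma \ref{extdcont}, as in Lemma \ref{lem:need}), but it amounts to redoing the uniqueness argument of Theorem \ref{ext1}, and calling that step ``routine bookkeeping'' undersells the part of your proof that actually carries the weight.

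One concrete flaw in the deferred part: your parenthetical claim that $\bar d\varphi_U=0$ because $\varphi_U$ is a $0$-form is false; in $\overline\Poly^{1}$ one has $\bar d\varphi_U=1\otimes\overline{\varphi_U}$, which vanishes only when $\varphi_U$ is constant, so the Leibniz rule of Lemma \ref{basic}(b) does not reduce $\bar d(\varphi_U\smile\omega)$ to $\varphi_U\smile\bar d\omega$. Fortunately no Leibniz rule is needed: since $\omega=\sum_U\varphi_U\smile\omega$ has only finitely many nonzero terms and $\bar d$ is linear, $\widehat T(\bar d\omega)=\sum_U\widehat T\bigl(\bar d(\varphi_U\smile\omega)\bigr)$ holds directly, and each summand can then be compared with $\widehat{\partial T}(\varphi_U\smile\omega)$ by the series argument. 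The cleanest repair, though, is simply to cite the uniqueness statement of Theorem \ref{ext1}, which renders the entire second half of your argument unnecessary.
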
\label{bdry}

\begin{proof}
	Since $\bar d$ is sequentially continuous it follows that $\widehat T\circ \bar d$ defines an element in $\Gamma_c^{k-1}(X)^*$. Since the extension in Theorem \ref{ext1} is unique, it suffices to show that \[ \widehat T(\bar d(\bar\iota(\pi_0,\ldots,\pi_{k-1}))=\partial T(\pi_0,\ldots,\pi_{k-1}) \] for $(\pi_0,\ldots,\pi_{k-1})\in \sD^{k-1}(X)$.

	The expression (\ref{eq:simplebdry}) shows that, for each open $U\subset X$, the $l^{th}$ term in the sum (\ref{eq:simplebdry})  is constant the $x_l$-variable, where $l=1,\ldots,k$, and thus belongs to $\ker Q_U^{k}$. Therefore
	\[
	Q_U^{k}\circ d(\jmath_U(\pi_0|_U,\ldots,\pi_{k-1}|_U))=Q_U^{k+1}\circ\jmath_U(1,\pi_0,\ldots,\pi_{k-1}).
	\]
	It follows that $\bar d_U(\bar\jmath(\pi_0,\ldots,\pi_{k-1}))=\bar\jmath_U(1,\pi_0,\ldots,\pi_{k-1})$. Since $U$ is arbitrary we have
	\[
	\bar d(\bar\iota(\pi_0,\ldots,\pi_{k-1}))=\bar\iota(1,\pi_0,\ldots,\pi_{k-1})=\bar\iota(\varphi,\pi_0,\ldots,\pi_{k-1})
	\]
	for any $\varphi\in\LIP_c(X)$ which is 1 on a neighborhood of $\spt\pi_0$.
	This implies that
	\[
	\widehat T\circ\bar d(\bar\iota(\pi_0,\ldots,\pi_{k}))=\widehat T(\bar\iota(\varphi,\pi_0,\ldots,\pi_k))=T(\varphi,\pi_0,\ldots,\pi_k)=\partial T(\pi_0,\ldots,\pi_k).
	\]
The claim follows.
\end{proof}

\subsection{Pre-dual for metric currents -- Proof of Theorem \ref{thm:pre_dual}}
Before proving Theorem \ref{thm:pre_dual}, we record he following locality property of $\bar\iota$. Note that the claim of Lemma \ref{rmk:locality} is not true for $\iota$, and thus $\Gamma_c^k(X)$ is not a pre-dual for metric currents.
\begin{lemma}\label{rmk:locality}
	Let $(\pi_0,\ldots,\pi_k)\in\sD^k(X)$. If $\pi_l$ is constant on a neighborhood of $\spt\pi_0$ for some $l=1,\ldots,k$ then $\bar\iota(\pi_0,\ldots,\pi_k)=0$.
\end{lemma}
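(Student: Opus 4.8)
The plan is to prove that the section $\bar\iota(\pi_0,\ldots,\pi_k)\in\overline\Gamma_c^k(X)$ is the zero section by showing that all of its germs vanish. Recall that $\bar\iota(\pi_0,\ldots,\pi_k)$ is the section $x\mapsto[\bar\jmath(\pi_0,\ldots,\pi_k)]_x$, and that a germ $[\bar\pi]_x\in\overline\Polys^k_x(X)$ is zero as soon as $\bar\rho_{W,X}(\bar\pi)=0$ for \emph{some} open neighborhood $W$ of $x$. Using the commutation relation \eqref{eq:relation} together with the fact that $\rho_{W,X}$ is the restriction map $\sigma\mapsto\sigma|_{W^{k+1}}$, one computes $\bar\rho_{W,X}(\bar\jmath_X(\pi_0,\ldots,\pi_k))=\bar\jmath_W(\pi_0|_W,\ldots,\pi_k|_W)=\pi_0|_W\otimes\overline{\pi_1|_W}\otimes\cdots\otimes\overline{\pi_k|_W}$. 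Thus it suffices, for every $x\in X$, to produce an open neighborhood $W$ on which one of the tensor factors vanishes. I would split into two cases according to whether $x$ lies in $\spt\pi_0$.

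First I would dispose of the case $x\notin\spt\pi_0$: choose an open $W\ni x$ disjoint from $\spt\pi_0$, so that $\pi_0|_W=0$; then $\bar\rho_{W,X}(\bar\jmath(\pi_0,\ldots,\pi_k))=0$ because the leading tensor factor vanishes (recall that the projective norm is a cross norm, cf.~\eqref{eq:projnorm2}, so a simple tensor with a zero factor is $0$). For the case $x\in\spt\pi_0$ I would use the hypothesis directly: let $V$ be the open neighborhood of $\spt\pi_0$ on which $\pi_l$ is constant, so that $x\in V$ and $\pi_l|_V$ is constant, i.e.\ $\overline{\pi_l|_V}=0$ in $\overline\LIP_\infty(V)$. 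Then $\bar\rho_{V,X}(\bar\jmath(\pi_0,\ldots,\pi_k))=\pi_0|_V\otimes\overline{\pi_1|_V}\otimes\cdots\otimes\overline{\pi_k|_V}$ is the zero element of $\overline\Poly^k(V)$, since its $l$-th factor is zero. Hence the germ at $x$ vanishes in either case, and therefore $\bar\iota(\pi_0,\ldots,\pi_k)(x)=0$ for every $x\in X$, i.e.\ $\bar\iota(\pi_0,\ldots,\pi_k)=0$.

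There is no substantial obstacle here; the argument is a straightforward unwinding of the definitions of $\bar\iota$, $\bar\jmath$, and the presheaf restriction maps. The only points requiring care are to split the argument according to whether $x\in\spt\pi_0$ — the hypothesis that $\pi_l$ is constant is assumed only on a neighborhood of $\spt\pi_0$, not globally — and to track the passage to the homogeneous quotient correctly via \eqref{eq:relation}. This also makes transparent the remark preceding the statement: the same computation for $\iota$ would give the germ $[\pi_0|_W\otimes\cdots\otimes\pi_k|_W]_x$, which need not vanish when $\pi_l$ is (locally) constant but not zero, precisely because no quotient by constants is applied to the factors $\pi_1,\ldots,\pi_k$ in the non-homogeneous space $\Poly^k$.
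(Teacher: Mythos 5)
Your proposal is correct and follows essentially the same route as the paper: a case split on whether $x\in\spt\pi_0$, with the germ at $x$ killed by the vanishing of $\overline{\pi_l|_V}$ in the homogeneous factor when $x\in\spt\pi_0$, and by $\pi_0|_W=0$ when $x\notin\spt\pi_0$. The extra bookkeeping you include (the commutation relation \eqref{eq:relation} and the cross-norm remark) only makes explicit what the paper's proof leaves implicit.
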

\begin{proof}
	If $x\in \spt\pi_0$ and $U$ is a neighborhood of $x$ so that $\pi_l$ is constant on $U$, then $\overline\Poly^k(U)\ni\pi_0|_U\otimes\overline{\pi_1|_U}\otimes\cdots\otimes\overline{\pi_k|_U}=0$. Thus $\bar\iota(\pi_0,\ldots,\pi_k)=[\pi_0\otimes\bar\pi_1\otimes\cdots\otimes\bar\pi_k]_x=0$
	
	If $x\notin\spt\pi_0$, then there is a neighborhood $V$ of $x$ on which $\pi_0$ vanishes, so that $\Poly^k(V)\ni\pi_0|_V\otimes\cdots\otimes\pi_k|_V=0$. 
\end{proof}
\begin{proof}[Proof of Theorem \ref{thm:pre_dual}]
By Theorem \ref{ext1} we have a linear map $$\Xi:\sD_k(X)\to \overline\Gamma_c^k(X)^\ast,\quad T\mapsto \widehat T.$$ The uniqueness of the extension implies injectivity $\Xi$. Indeed, the only current $T\in\sD_k(X)$ for which  $\widehat T=\Xi(T)=0$ is the zero current $T=0$.

If $\widehat T\in \overline\Gamma^k_c(X)^*$, then $T:=\widehat T\circ\bar\imath:\sD^k(X)\to \R$ defines a metric $k$-current. Indeed, $(k+1)$-linearity and sequential continuity are clear, and locality follows from Lemma \ref{rmk:locality}. Clearly $\Xi(T)=\widehat T$ and thus we have shown the surjectivity of $\Xi$.

The identity (\ref{main1:partial}) is proven in Proposition \ref{prop:bdry}. It remains to prove the sequential continuity of $\Xi$. By linearity, it suffices to show that, if $T_i\to 0$ in $\sD_k(X)$, then $\widehat T_i\to 0$ in $\Gamma_c^k(X)^*$.

Let $\omega\in \overline\Gamma_c^k(X)$ and suppose $\{ \bar\pi_U \}_{U\in\mathcal U}$ is overlap-compatible with $\omega$. Let $\{ \varphi_U \}_{U\in \mathcal U}$ be a Lipschitz partition of unity subordinate to $\mathcal U$. Since $\omega$ is compactly supported, there is a compact set  $K\subset X$ containing every set $U\in \mathcal U$ for which $\bar\pi_U\ne 0$. The collection of these elements of $\mathcal U$ is a finite set and we denote it by $\mathcal U_K$.

For each $U\in\mathcal U_K$ let $\pi_U\in Q_U\inv(\bar\pi_U)$ and fix a representation $(\pi_0^{j,U},\ldots,\pi_k^{j,U})_j\in\Rep(\pi_U)$. We have
\begin{align*}
\widehat T_i(\omega)=\sum_{U\in\mathcal U_K}\overline{(T_i)_{\varphi_U}^U}(\bar\pi_U)=\sum_{U\in\mathcal U_K}\sum_j^\infty T_i(\varphi_U\pi_0^{j,U},\pi_1^{j,U},\ldots,\pi_k^{j,U}).
\end{align*}
Since $\lim_{i\to\infty}(T_i)_{\varphi_U}^U(\pi_0,\ldots,\pi_k)=0$ for every $(\pi_0,\ldots,\pi_k)\in\LIP_\infty(U)^{k+1}$ the bound (\ref{bound}) and the multilinear uniform boundedness principle, cf. \cite[Theorem 1]{san85} and \cite[Theorem 1]{thi09}, implies that there is a constant $C>0$ for which
\begin{equation*}\label{eq:1}
|T_i(\varphi_U\pi_0,\pi_1,\ldots,\pi_k)|\le CL(\pi_0)\cdots L(\pi_k)
\end{equation*}
for all $i\in\N$ and every $(\pi_0,\ldots,\pi_k)\in\LIP_\infty(U)^{k+1}$. By the dominated convergence theorem, we have that
\[ 
\lim_{i\to\infty}\widehat T_i(\omega)=\sum_{U\in\mathcal V}\sum_j^\infty\lim_{i\to\infty}T_i(\varphi_U\pi_0^{j,U},\pi_1^{j,U},\ldots,\pi_k^{j,U})=0. 
\]
The proof is complete.
\end{proof}


\section{Currents of locally finite mass: extension to partition-continuous polylipschitz forms} 
\label{sec:part_cont}

In this section we show that currents of \emph{locally finite mass} admit a further extension to \emph{partition-continuous} sections. We introduce the following notation: for a subset $E\subset X$ and $\bar\pi\in\overline\Poly^k(X)$, set
\[
\overline L_k(\bar\pi;E):=\overline L_k(\rho_{E,X}(\bar\pi))
\]
and
\[\begin{split}
\Lip_k(\bar\pi; E)=\inf\left\{ \sup_{x\in E}\sum_j^\infty |\pi_0^j(x)|\Lip(\pi_1^j|_E)\cdots \Lip(\pi_1^j|_E): (\pi_0^j,\ldots,\pi_k^j)\in\Rep(\pi) \right\}
\end{split}\]
for any $\pi\in Q\inv(\bar\pi)$. This is clearly independent of the choice of $\pi$ and the estimate
\[ \Lip_k(\bar\pi;E)\le \overline L_k(\bar \pi;E) \] holds.

\begin{definition}\label{pwnorm}
	For $\omega\in \overline\G^k(X)$ and $x\in X$, the \emph{pointwise norm} $\|\omega\|_x$ of $\omega$ at $x$ is \[ \|\omega\|_x=\lim_{r\to 0}\Lip_k(\bar\pi;B_r(x)), \] where  $\bar\pi\in \overline\Poly^k(U)$ satisfies $[\bar\pi]_x=\omega(x)$ and $U$ is a neighborhood of $x$.
\end{definition}

A simple argument using a representation of $\omega$ shows that, for each $\omega \in \Gamma^k(X)$, the function $x\mapsto \|\omega\|_x$ is upper semicontinuous. 

We have, for $\pi=(\pi_0,\ldots,\pi_k)\in \sD^k(X)$, that
\[ 
\|\bar\iota(\pi)\|_x\le |\pi_0(x)|\Lip\pi_1(x)\cdots\Lip\pi_k(x) 
\] 
for all $x\in X$.

\subsection{Partition continuous polylipschitz forms and their convergence}
Let $E\subset X$ be a Borel set, and define
\[
\overline\G^k_E(X):=\rho_E\inv(\overline\Gamma^k(E)).
\]
Given $\omega\in\overline\G^k_E(X)$, a collection $\{\bar\pi_U\}_\cU$ is said to be \emph{overlap-compatible with $\omega$ in $E$}, if $\cU$ is a locally finite precompact open cover of $E$ and $\{ \bar\rho_{E\cap U,U}(\bar\pi_U) \}_\cU$ is overlap-compatible with $\omega|_E$. Set 
\[
\overline\G^k_{E,c}(X)=\overline\G^k_E(X)\cap \overline\G^k_c(X).
\]

\begin{definition}\label{def:pcpolylip}
Let $\mathcal E$ be a countable Borel partition of $X$. A section $\omega\in\overline\G^k(X)$ is called $\mathcal E$-continuous if there is a locally finite precompact open cover $\cU$ of $X$ and a collection $\{ \bar\pi_U\in\overline\Poly^k(U) \}_\cU$ so that
\begin{itemize}
	\item[(1)] $\{\bar\pi_U\}_\cU$ is overlap-compatible with $\omega$ in $E$, for every $E\in\mathcal E$;
	\item[(2)] $\displaystyle \sup_{E\in\mathcal E}\overline L_k(\pi_U;E\cap U)=C_U<\infty$ for every $U\in\cU$.
\end{itemize}
Given $\mathcal E$ and $\{ \bar\pi_U \}_\mathcal U$ satisfying (1) and (2) above we say that $\{\pi_U\}_\cU$ represents $\omega$ with respect to $\mathcal E$.

A section $\omega\in\overline\G^k(X)$ is called \emph{partition-continuous} if it is $\mathcal E$-continuous for some countable Borel partition of $X$. We also call a partition $\mathcal E$, for which $\omega$ is $\mathcal E$-continuous, an \emph{admissible partition for $\omega\in \overline\G^k(X)$.}
\end{definition}
We denote $\overline\Gamma^k_{\operatorname{pc}}(X)$ the vector space of of partition-continuous forms, and set $\hParcon(X):=\overline\G_c^k(X)\cap \overline\Gamma^k_{\operatorname{pc}}(X)$.

\begin{definition}\label{def:pcpolylipconv}
A sequence $(\omega_n)$ in $\hParcon(X)$ converges to $\omega\in\hParcon(X)$, denoted $\omega_n\to \omega$ in $\hParcon(X)$, if there is a compact set $K\subset X$, a countable Borel partition $\mathcal E$, a locally finite precompact open cover $\mathcal U$ and collections $\{ \bar\pi_U^n \}_\mathcal U$ representing $\omega_n-\omega$ with respect to $\mathcal E$, $n\in\N$, so that
\begin{itemize}
	\item[(1)] $\spt(\omega_n-\omega)\subset K$, for each $n\in\N$,
	\item[(2)] $\rho_{E\cap U,U}(\bar\pi^n)\to 0$ in $\overline\Poly^k(E\cap U)$ for all $E\in\mathcal E$ and $U\in\cU$, and 
	\item[(3)] $\displaystyle \sup_{n,E} \overline L_k(\bar\pi^n;E\cap U)=C_K<\infty$ for each $U\in \cU_K:=\{U\in\cU: U\cap K\ne\varnothing \}$.
\end{itemize}
\end{definition}

Note that the inclusion $\imath_{pc}:\overline\Gamma_c^k(X)\hookrightarrow \hParcon(X)$ is sequentially continuous. We denote by
\[
\iota_{pc}:=\imath_{pc}\circ\iota:\sD^k(X)\to \hParcon(X)
\]
the natural inclusion.

\subsection{Mass bounds for restrictions of currents to Borel sets} In this section we prove Theorem \ref{main2}. In fact the bound \eqref{main1:mass} in Theorem \ref{main2} follows directly from the more technical statement \eqref{eq:intbound} in Proposition \ref{prop:eext}. We begin by discussing a variant of Lemma \ref{lem:need} for currents of locally finite mass and their restrictions to Borel sets.

Let $T\in M_{k,\loc}(X)$ and $E\subset X$ be a Borel set. If $\varphi\in\LIP_\infty(E)$ is the restriction to $E$ of a compactly supported function (equivalently, if $\spt\varphi$ is a totally bounded set) and $U\subset X$ is an open set with $\spt\varphi\subset U$, consider the map $(T\lfloor E)_\varphi^U$ in (\ref{eq:map}). By the locality properties of currents the value $(T\lfloor E)_\varphi^U(\pi_0,\ldots,\pi_k)$ for $(\pi_0,\ldots,\pi_k)\in\LIP_\infty(U)^{k+1}$ depends only on $\pi_0|_{E\cap U},\ldots,\pi_k|_{E\cap U}$. Thus we get a $(k+1)$-linear map
\[
T_\varphi^{E\cap U}:\LIP_\infty(E\cap U)^{k+1}\to\R.
\]
By Lemma \ref{moi}, the map $T_\varphi^{E\cap U}$ satisfies the bound
\begin{equation}\label{eq:pcbound}
|T_\varphi^{E\cap U}(\pi_0,\ldots,\pi_k)|\le \Lip(\pi_1|_{E\cap U})\cdots\Lip(\pi_k|_{E\cap U})\int_{E\cap U}|\varphi||\pi_0|\ud\|T\|.
\end{equation}
In particular it is bounded and satisfies (\ref{eq:locality}). We denote by
\begin{equation*}
\overline{T_\varphi^{E\cap U}}:\overline\Poly^k(E\cap U)\to\R
\end{equation*}
the sequentially continuous linear map for which $T_\varphi^{E\cap U}=\overline{T_\varphi^{E\cap U}}\circ\bar\jmath_{E\cap U}$; cf. Lemma \ref{lem:need}. Note that the statements in Remark \ref{rmk:functorial} remain true for all Borel sets $U\subset V\subset X$.

Given $T\in M_{k,\loc}(X)$ and a Borel set $E\subset X$, we define $\widehat T_E:\G^k_{E,c}(X)\to\R$ by
\begin{equation}\label{eq:E}
\widehat T_E(\omega):= \sum_{U\in\mathcal U}\overline{T_{\varphi_U}^{E\cap U}}(\bar\rho_{E\cap U,U}(\bar\pi_U)),
\end{equation}
whenever $\omega\in \overline\G_{E,c}^k(X)$ is represented by $\{\pi_U\}_\cU$ in $E$, and $\{\varphi_U\}$ is a Lipschitz partition of unity (in $E$) subordinate to $\cU$.
\begin{proposition}\label{prop:eext}
	Let $T\in M_{k,\loc}(X)$ and let $E\subset X$ be a Borel set. Then $\widehat T_E$ is well-defined, linear and satisfies $T\lfloor_E=\widehat T_E\circ\bar\iota$. Moreover,
	\begin{equation}\label{eq:intbound}
	|\widehat T_E(\omega)|\le \int_E\|\omega|_E\|_x\ud\|T\|(x),\quad \textrm{ for each }\omega\in\overline\G^k_{E,c}(X).
	\end{equation}
\end{proposition}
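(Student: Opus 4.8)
The plan is to follow the template of the proof of Theorem \ref{ext1}, adapting it to the presence of the Borel set $E$ and, crucially, tracking the mass estimate through every step. First I would establish that $\widehat T_E$ is well-defined: given two representations $\{\bar\pi_U\}_{\cU}$ and $\{\bar\sigma_V\}_{\mathcal V}$ of $\omega$ in $E$, with subordinate Lipschitz partitions of unity $\{\varphi_U\}$ and $\{\psi_V\}$, I would pass to a common locally finite refinement $\mathcal W$ of $\cU\cap\mathcal V$ along which $\bar\rho_{W,U}(\bar\pi_U)=\bar\rho_{W,V}(\bar\sigma_V)$ holds (using that overlap-compatibility in $E$ is stable under refinement, Remark \ref{rmk:nicecover}), choose a third partition of unity $\{\theta_W\}$, and then use the functorial properties of $\overline{T_\varphi^{E\cap U}}$ (the analogue of Remark \ref{rmk:functorial} for Borel sets, as noted just before the statement) to rewrite $\sum_U\overline{T_{\varphi_U}^{E\cap U}}(\bar\rho_{E\cap U,U}(\bar\pi_U))$ as a triple sum over $\cU\times\mathcal V\times\mathcal W$ and collapse it to $\sum_V\overline{T_{\psi_V}^{E\cap V}}(\bar\rho_{E\cap V,V}(\bar\sigma_V))$, exactly as in the well-definedness part of Theorem \ref{ext1}. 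Linearity is then immediate from the bilinearity of the cup product / multilinearity of $T$, and the factorization $T\lfloor_E = \widehat T_E\circ\bar\iota$ follows by choosing, for $(\pi_0,\dots,\pi_k)\in\sD^k(X)$, the representation $\bar\pi_U=\pi_0|_U\otimes\overline{\pi_1|_U}\otimes\cdots\otimes\overline{\pi_k|_U}$ and applying $T_{\varphi_U}^{E\cap U}=\overline{T_{\varphi_U}^{E\cap U}}\circ\bar\jmath_{E\cap U}$ together with $\sum_U\varphi_U\equiv 1$ on $\spt\pi_0$, recovering $T(\pi_0,\ldots,\pi_k)\lfloor_E$ from Lemma \ref{moi}.

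The main work — and the step I expect to be the genuine obstacle — is the integral bound \eqref{eq:intbound}. The strategy is to fix $\omega\in\overline\G^k_{E,c}(X)$, an overlap-compatible representation $\{\bar\pi_U\}_{\cU}$ in $E$, and a subordinate partition of unity $\{\varphi_U\}$, and to estimate each summand $\overline{T_{\varphi_U}^{E\cap U}}(\bar\rho_{E\cap U,U}(\bar\pi_U))$ using \eqref{eq:pcbound}. For a single summand, choosing a representation $(\pi_0^{j},\ldots,\pi_k^{j})_j$ of a lift $\pi_U\in Q^{-1}(\bar\pi_U)$ gives
\[
\bigl|\overline{T_{\varphi_U}^{E\cap U}}(\bar\rho_{E\cap U,U}(\bar\pi_U))\bigr|\le \sum_j^\infty \Lip(\pi_1^j|_{E\cap U})\cdots\Lip(\pi_k^j|_{E\cap U})\int_{E\cap U}|\varphi_U||\pi_0^j|\ud\|T\|.
\]
The delicate point is that the right-hand side involves a single fixed representation, whereas the pointwise norm $\|\omega|_E\|_x$ in Definition \ref{pwnorm} is an infimum over representations localized to shrinking balls. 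To bridge this gap I would first prove the bound locally: pull the sum inside the integral (Tonelli), so that the summand is bounded by $\int_{E\cap U}\bigl(\sum_j |\varphi_U(x)||\pi_0^j(x)|\Lip(\pi_1^j|_{E\cap U})\cdots\Lip(\pi_k^j|_{E\cap U})\bigr)\ud\|T\|(x)$, and then observe that for $\|T\|$-a.e.\ $x$ the inner integrand can be made arbitrarily close to $|\varphi_U(x)|\,\|\,\omega|_{E}\,\|_x$ by working over a small ball $B_r(x)\cap E\cap U$ in place of $E\cap U$ — this is where one invokes the definition of $\|\cdot\|_x$ as $\lim_{r\to 0}\Lip_k(\bar\pi;B_r(x))$, the monotonicity $\Lip_k(\bar\pi;E')\le\Lip_k(\bar\pi;E)$ for $E'\subset E$, and upper semicontinuity of $x\mapsto\|\omega\|_x$ to control the error uniformly. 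Summing over $U\in\cU$ and using $\sum_U\varphi_U\equiv 1$ and $\sum_U \mathbf 1_{E\cap U}$ being finite at each point then yields $|\widehat T_E(\omega)|\le \int_E \|\omega|_E\|_x\,\ud\|T\|(x)$.

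To make the localization rigorous I would actually carry out a partition argument: decompose $E\cap\spt\omega$ into finitely (or countably) many small Borel pieces $E_i\subset B_{r_i}(x_i)$ on each of which $\sup_{x\in E_i}\sum_j|\pi_0^j(x)|\Lip(\pi_1^j|_{E_i})\cdots\Lip(\pi_k^j|_{E_i})$ is within $\varepsilon$ of $\sup_{x\in E_i}\|\omega|_E\|_x$ (possible by upper semicontinuity and the definition of $\Lip_k$), apply the additivity $\overline{T_{\varphi}^{E\cap U}}=\sum_i\overline{T_{\varphi}^{E_i\cap U}}$ together with $\|T\|\lfloor E = \sum_i \|T\|\lfloor E_i$, estimate each piece by $(\sup_{E_i}\|\omega|_E\| + \varepsilon)\,\|T\|(E_i)$, sum up, and let $\varepsilon\to 0$. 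The one subtlety to watch is that replacing $E$ by $E_i$ genuinely decreases all the Lipschitz seminorms $\Lip(\pi_l^j|_{E\cap U})$, so the per-piece estimates are honest upper bounds; the compact support of $\omega$ keeps the index set $\cU_{\spt\omega}$ finite so no convergence issue arises in the outer sum over $U$. Everything else — multilinearity, the functorial identities, Tonelli — is routine.
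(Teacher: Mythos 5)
The first part of your plan (well-definedness via a common refinement and the functorial identities, linearity, and the factorization $T\lfloor_E=\widehat T_E\circ\bar\iota$) matches the paper, which indeed just reruns the proof of Theorem \ref{ext1} with $\overline{T_{\varphi_U}^{E\cap U}}$ in place of $\overline{T_{\varphi_U}^{U}}$. The gap is in your proof of \eqref{eq:intbound}. You correctly identify the delicate point --- a single fixed representation versus the infimum defining $\|\omega|_E\|_x$ --- but your proposed bridge does not close it. Keeping the representation $(\pi_0^j,\ldots,\pi_k^j)_j$ of the fixed lift of $\bar\pi_U$ and merely shrinking the underlying set to $E_i\subset B_{r_i}(x_i)$ gives, in the limit, a quantity of the form $\sum_j|\pi_0^j(x_i)|\Lip\pi_1^j(x_i)\cdots\Lip\pi_k^j(x_i)$, which is only an upper bound for $\|\omega\|_{x_i}$ and can be strictly and badly larger: take for instance $\pi=\pi_0\otimes\pi_1+(-\pi_0)\otimes\pi_2$ with $\pi_1-\pi_2$ constant but $\pi_1,\pi_2$ oscillating wildly near $x_i$; the corresponding homogeneous germ at $x_i$ has pointwise norm zero, while your localized sum stays of size $|\pi_0(x_i)|\bigl(\Lip\pi_1(x_i)+\Lip\pi_2(x_i)\bigr)$ no matter how small $E_i$ is. Upper semicontinuity of $x\mapsto\|\omega\|_x$ and monotonicity of $\Lip_k$ in the set cannot repair this, because the loss comes from the choice of representation, not from the size of the set. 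So the step ``the inner integrand can be made arbitrarily close to $|\varphi_U(x)|\,\|\omega|_E\|_x$ by working over a small ball'' is false as stated, and the subsequent partition argument inherits the same defect.

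The paper's proof goes the other way around: it first fixes a simple Borel function $g=\sum_l a_l\chi_{A_l}\ge\|\omega|_E\|$ and $\varepsilon>0$, chooses open sets $U_l\supset A_l$ with $\|T\|(U_l)<\|T\|(A_l)+\varepsilon$ (using that $\|T\|$ is Radon), and then builds a \emph{new} collection overlap-compatible with $\omega$ in $E$ adapted to $g$: for each $x\in A_l\cap\spt\omega$ one picks a small ball $B_x\subset U_l$ and a representative $\bar\pi_x\in\overline\Poly^k(B_x)$ of the germ $\omega|_E(x)$ with $\Lip_k(\bar\pi_x;E\cap B_x)\le a_l+\varepsilon$ --- this is where Definition \ref{pwnorm}, i.e.\ the limit over shrinking balls of an infimum over representations, is actually used --- and Lemma \ref{lem:overlap} then produces from these germs an overlap-compatible collection. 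The already-established well-definedness of $\widehat T_E$ allows one to evaluate $\widehat T_E(\omega)$ with this adapted collection, and the single-summand estimate $|\overline{T_{\varphi}^{E\cap U}}(\bar\pi)|\le\Lip_k(\bar\pi;E\cap\spt\varphi)\int_E|\varphi|\,\ud\|T\|$ gives $|\widehat T_E(\omega)|\le\sum_l(a_l+\varepsilon)\|T\|(U_l\cap E)$; letting $\varepsilon\to 0$ and taking the infimum over simple majorants $g$ of the upper semicontinuous function $\|\omega|_E\|$ yields \eqref{eq:intbound}. The missing idea in your plan is precisely this re-choice of local representatives (together with the use of well-definedness to switch to them), rather than localization of a fixed representation.
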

\begin{proof}
	Arguing as in the proof of Theorem \ref{ext1}, we see that $\widehat T_E$ is well-defined. Linearity is clear and the factorization follows from the identities $T_{\varphi_U}^{E\cap U}=\overline{T_{\varphi_U}^{E\cap U}}\circ\bar\jmath_{E\cap U}$ as in the proof of Theorem \ref{ext1}.
	
	Next we prove the bound in the claim. Let $\varphi\in\Lip_\infty(E)$ and $\spt \varphi\subset U$ a precompact open set. For $\pi\in \overline \Poly^k(X)$,  $\pi\in Q\inv(\bar\pi)$ and  $(\pi_0^j,\ldots,\pi_k^j)_j\in\Rep(\pi)$, we have
		\[
		\overline{T_\varphi^{E\cap U}}(\bar\pi)=\sum_j^\infty T_\varphi^{E\cap U}(\pi_0^j,\ldots,\pi_k^j)=\sum_j^\infty T(\chi_E\varphi\pi_0^j,\pi_1^k,\ldots,\pi_k^j)
		\]
		and, by (\ref{eq:pcbound}), the estimate 
		\begin{align*}
		|\overline{T_\varphi^{E\cap U}}(\bar\pi)| &\le \sum_j^\infty\Lip(\pi_1^j|_{E\cap\spt\varphi})\cdots\Lip(\pi_k^j|_{E\cap\spt\varphi})\int_E|\varphi\pi_0^j|\ud\|T\| \\
		&\le \sup_{x\in E\cap \spt\varphi}\left(\sum_j^\infty\Lip(\pi_1^j|_{E\cap\spt\varphi})\cdots\Lip(\pi_k^j|_{E\cap\spt\varphi})|\pi_0^j(x)|\right)\int_E|\varphi|\ud\|T\|,
		\end{align*}
		yielding 
		\begin{equation}\label{moii}
		|\overline{T_\varphi^{E\cap U}}(\bar\pi)|\le \Lip_k(\bar\pi;E\cap \spt\varphi)\int_E|\varphi|\ud\|T\|.
		\end{equation}
	
	Let $\omega\in \overline\G_{E,c}^k(X)$. We extend $\omega|_E:E\to\overline\Polys^k(E)$ as the zero map
	$$
	\omega|_E:X\to \overline\Polys^k(E)\sqcup\overline\Polys^k(X\setminus E)
	$$
	outside $E$, and thus $\|\omega|_E\|_x=0$ if $x\notin E$. Let $K:=\spt\omega$ and let $g:X\to\R$ be a simple Borel function satisfying $\|\omega|_E\|\le g.$ We may assume that 
	\[
	g=\sum_l^ma_l\chi_{A_l},
	\]
	where $m\in\N$ and $\{A_0,\ldots, A_m\}$ is a Borel partition of $K$.
	
	Let $\varepsilon>0$. Since $\|T\|$ is a Radon measure there is, for each $l$, an open set $U_l\supset A_l$, satisfying 
	\[
	\|T\|(U_l)<\|T\|(A_l)+\varepsilon.
	\]
	We construct a collection overlap-compatible with $\omega$ in $E$ as follows: for $x\in K$ there exists a unique $l=0,\ldots,m$ for which $x\in A_l$. Fix a radius $r_x>0$ for which $B_x:=B_{r_x}(x)\subset U_l$ is precompact. Let $\pi_x\in\overline\Poly^k(B_x)$ satisfy 
	\[
	[\bar\rho_{E\cap B_x,B_x}(\pi_x)]_x=\omega|_E(x)\textrm{ and }\Lip_k(\bar\pi_{x}; E\cap B_x)\le a_l+\varepsilon,
	\]
	if $x\in E$, and $\pi_x=0$ if $x\in K\setminus E$. For $x\notin K$ let $B_x\subset X\setminus K$ be a precompact ball around $x$ and set $\pi_x=0$.
	
	By Lemma \ref{lem:overlap}, we may pass to a locally finite refinement $\cU$ and a collection
	\[
	\pi_U:=\rho_{U,B_x}(\pi_x)\in \overline\Poly^k(U), \textrm{ where }U\subset  B_x
	\]
	so that $\{ \bar\rho_{E\cap U, U}(\bar\pi_U) \}_\cU$ is overlap-compatible with $\omega|_E$. Note that $\rho_{E\cap U,U}(\bar\pi_U)=0$ if $U\notin\cU_K:= \{ U\in \cU: U\cap K\ne\varnothing \}$, since otherwise there would be $x\in E\setminus K$ for which $\omega(x)=[\rho_{E\cap U,U}(\bar\pi_U)]_x\ne 0$.

	Let $\{\varphi_U\}$ be a Lipschitz partition of unity subordinate to $\cU$ (in $E$). By the definition of $\widehat T_E$ and (\ref{moii}) we have
	\begin{align*}
	|\widehat T_E(\omega)| &\le\sum_{U\in\cU_K}|\overline{T_{\varphi_U}^{E\cap U}}(\bar\pi_U)|\le \sum_{U\in\cU_K}\Lip_k(\pi_U;E\cap U)\int_E\varphi_U\ud\|T\|.
	\end{align*}
	We may express the collection $\cU_K$ as \[ \cU_K=\bigcup_{l=0}^m\cU_l,\quad\textrm{ where }\quad \cU_l=\{ U\in \cU: U\subset B_x \textrm{ with }x\in A_l \}. \]
	Thus 
	\begin{align*}
	|\widehat T_E(\omega)|\le & \sum_l^m\sum_{U\in\cU_l}\Lip_k(\pi_j;E\cap U)\int_E\varphi_j\ud\|T\|\\
	\le & \sum_l^m(a_l+\varepsilon)\int_E\left(\sum_{U\in\cU_l}\varphi_U\right)\ud\|T\|\\
	\le &\sum_l^m(a_l+\varepsilon)\int_E\chi_{U_l}\ud\|T\|\le \sum_l^m (a_l+\varepsilon)\|T\|(U_l\cap E).
	\end{align*}
	By the choice of the open sets $U_l$, we have that
	\begin{align*}
	\|T\|(U_l\cap E)+\|T\|(U_l\setminus E) &<\|T\|(A_l\cap E)+\|T\|(A_l\setminus E)+\varepsilon \\
	&< \|T\|(A_l\cap E)+\|T\|(U_l\setminus E)+\varepsilon.
	\end{align*}
	Thus $$\|T\|(U_l\cap E)<\|T\|(A_l\cap E)+\varepsilon$$ for each $l\le m$. Since $\varepsilon>0$ is arbitrary, we have 
	\[ 
	|\widehat T_E(\omega)|\le \sum_l^ma_l\|T\|(A_l\cap E)=\int_Eg\ud\|T\|. 
	\] 
	By taking infimum over all simple functions $g$ satisfying $\|\omega|_E\|\le g$, we obtain the claim.
\end{proof}

\begin{proposition}\label{prop:eextunique}
	Let $T\in \sD_k(X)$. The map $\widehat T_E:\overline\G_{E,c}^k(X)\to \R$ in \eqref{eq:E} is unique among linear maps satisfying \eqref{eq:intbound}.
\end{proposition}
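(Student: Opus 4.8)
The plan is to reduce the uniqueness statement to the uniqueness already established in Theorem \ref{ext1} (or, more precisely, in Lemma \ref{lem:need}) by the same partition-of-unity and cup-product argument used at the end of the proof of Theorem \ref{ext1}. Suppose $A:\overline\G_{E,c}^k(X)\to\R$ is linear and satisfies \eqref{eq:intbound}, i.e. $|A(\omega)|\le\int_E\|\omega|_E\|_x\,\ud\|T\|(x)$ for every $\omega\in\overline\G_{E,c}^k(X)$. First I would note that \eqref{eq:intbound} forces $A$ to agree with $\widehat T_E$ on the image of $\bar\iota$: since $T\lfloor_E=\widehat T_E\circ\bar\iota$ by Proposition \ref{prop:eext}, and since $A$ is linear, it suffices to pin down $A(\bar\iota(\pi_0,\ldots,\pi_k))$ for $(\pi_0,\ldots,\pi_k)\in\sD^k(X)$ — but this is \emph{not} yet enough, because elements of $\overline\G_{E,c}^k(X)$ need not lie in the image of $\bar\iota$. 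The point of \eqref{eq:intbound} is precisely that it controls $A$ on \emph{all} of $\overline\G_{E,c}^k(X)$, not merely on a dense image, so no density/continuity argument is available; instead one argues locally.

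The key step is the localization. Given $\omega\in\overline\G_{E,c}^k(X)$, pick a collection $\{\bar\pi_U\}_{U\in\cU}$ overlap-compatible with $\omega$ in $E$ over a locally finite precompact open cover $\cU$, and a Lipschitz partition of unity $\{\varphi_U\}$ subordinate to $\cU$ (on $E$). By compactness of $\spt\omega$ only finitely many terms are nonzero, so $\omega=\sum_U \varphi_U\smile\omega$ and by linearity $A(\omega)=\sum_U A(\varphi_U\smile\omega)$, and similarly $\widehat T_E(\omega)=\sum_U\overline{T_{\varphi_U}^{E\cap U}}(\bar\rho_{E\cap U,U}(\bar\pi_U))$. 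So it is enough to show, for each fixed $U$, that $A(\varphi_U\smile\omega)=\overline{T_{\varphi_U}^{E\cap U}}(\bar\rho_{E\cap U,U}(\bar\pi_U))$. For this I would introduce the bounded linear functional $A_U:\overline\Poly^k(E\cap U)\to\R$ defined, after using Lemma \ref{ext} and the surjectivity of $Q$ to assume $\bar\pi_U$ is (the restriction of) a global homogeneous polylipschitz function, by $A_U(\bar\sigma)=A(\varphi_U\smile\bar\gamma(\tilde\sigma))$ for $\bar\sigma\in\overline\Poly^k(E\cap U)$ and a suitable extension $\tilde\sigma$; boundedness of $A_U$ is exactly where \eqref{eq:intbound} is used, via the pointwise-norm estimate $\|\bar\iota(\pi)\|_x\le|\pi_0(x)|\Lip\pi_1(x)\cdots\Lip\pi_k(x)$ and the bound \eqref{moii} in the proof of Proposition \ref{prop:eext}. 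Then $A_U\circ\bar\jmath_{E\cap U}(\pi_0,\ldots,\pi_k)=A(\bar\iota_{E\cap U}(\varphi_U\pi_0,\pi_1,\ldots,\pi_k))=T(\varphi_U\pi_0,\pi_1,\ldots,\pi_k)=T_{\varphi_U}^{E\cap U}\circ\bar\jmath_{E\cap U}(\pi_0,\ldots,\pi_k)$ by the factorization hypothesis on $A$ together with Proposition \ref{prop:eext}. The uniqueness clause in Lemma \ref{lem:need} then gives $A_U=\overline{T_{\varphi_U}^{E\cap U}}$, and summing over $U$ finishes the proof.

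The main obstacle I anticipate is verifying that the functional $A_U$ on $\overline\Poly^k(E\cap U)$ is genuinely well-defined and bounded. One has to check that $A(\varphi_U\smile\bar\gamma(\cdot))$ depends only on the germ data and, crucially, that the bound \eqref{eq:intbound} translates into an honest operator-norm bound on $A_U$ — this requires relating the pointwise norm $\|\varphi_U\smile\bar\gamma(\tilde\sigma)|_E\|_x$ over the compact set $E\cap\spt\varphi_U$ to $\Lip_k(\bar\sigma; E\cap\spt\varphi_U)$, much as in the derivation of \eqref{moii}. Once boundedness of each $A_U$ is in hand, the rest is a routine application of the uniqueness in Lemma \ref{lem:need} and finite additivity, exactly paralleling the uniqueness argument already given for Theorem \ref{ext1}.
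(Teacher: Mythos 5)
Your proposal is correct and follows essentially the same route as the paper: a cup-product/partition-of-unity localization of $\omega$, the definition of local functionals $A_U$ whose boundedness comes from \eqref{eq:intbound}, identification $A_U=T_{\varphi_U}^{E\cap U}$ via the factorization hypothesis as in the uniqueness part of Theorem \ref{ext1}, the uniqueness clause of Lemma \ref{lem:need} to conclude $\overline{A_U}=\overline{T_{\varphi_U}^{E\cap U}}$, and summation over the cover. One cosmetic caveat: it is the factorization hypothesis $A\circ\bar\iota=T\lfloor E$ (not \eqref{eq:intbound} itself, which only yields that $A(\omega)$ depends on $\omega|_E$ and the boundedness of $A_U$) that pins down $A$ on the image of $\bar\iota$, and you do in fact invoke it at the right place.
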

\begin{proof}
	Let $A:\overline\G_{E,c}^k(X)\to\R$ be a linear map such that $A\circ\bar\iota=T\lfloor E$ and $A$ satisfies (\ref{eq:intbound}). We observe that,  by (\ref{eq:intbound}), the value $A(\omega)$ depends only on $\omega|_E$.	
	
	Let $\omega\in\overline\Gamma_{E,c}^k(X)$ and suppose $\{ \bar\pi_U \}_\cU$ is overlap-compatible with $\omega$ in $E$, and let $\{\varphi_U \}$ be a Lipschitz partition of unity in $E$ subordinate to $\cU$. For each $U\in\cU$ note that $\varphi_U\smile\omega|_E=\bar\gamma_E(\varphi_U\smile\bar\rho_{E\cap U,U}(\bar\pi_U))$. Consider the multilinear map 
	\[
	A_U:\LIP_\infty(U\cap E)^{k+1}\to\R,\quad A_U= A(\bar\gamma_E(\varphi_U\smile\jmath_{E\cap U}(\cdot)))
	\]
	where $\bar\gamma_E:\Poly^k(E)\to \Gamma^k(E)$ is the canonical map in (\ref{eq:gammaembed}). By (\ref{eq:intbound}), $A_U$ is bounded. As in the proof of Theorem \ref{ext1} we see that $A_U=T_{\varphi_U}^{E\cap U}$. Thus
	\[
	\overline{T_{\varphi_U}^{E\cap U}}=\overline{A_U}=A(\bar\gamma_E(\varphi_U\smile\bar\rho_{E\cap U,U}(\cdot))):\Poly^k(E\cap U)\to \R
	\]
	It follows that
	\begin{align*}
	A(\omega)&=\sum_{U\in\cU}A(\varphi_U\smile\omega)=\sum_{U\in\cU}A(\bar\gamma_E(\varphi_U\smile\bar\rho_{E\cap U,U}(\bar\pi_U)))\\
	&=\sum_{U\in\cU}\overline{T_{\varphi_U}^{E\cap U}}(\bar\pi_U)=\widehat T_E(\omega).
	\end{align*}
\end{proof}

\subsection{Extending currents of locally finite mass} The remainder of this section is devoted to the proof of Theorem \ref{ext2}. The existence and uniqueness of the extension is proved in Proposition \ref{prop:pcext} below.

\begin{proposition}\label{prop:pcext}
Let $T\in M_{k,\loc}(X)$ and let $\widehat T:\hParcon(X)\to\R$ be the linear map
\[
\omega \mapsto \sum_{E\in\mathcal E}\widehat T_E(\omega)
\]
whenever $\mathcal E$ is an admissible partition for $\omega$. Then $\widehat T$ is well-defined. Moreover, $\widehat T$ is the unique sequentially continuous linear map $\overline\Gamma_{pc,c}^k(X)\to \R$ satisfying
\[
\widehat T\circ\bar\iota_{pc}=T.
\]
\end{proposition}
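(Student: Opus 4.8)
The plan is to establish the four assertions of the proposition in turn: (i) that for a given $\omega\in\hParcon(X)$ the series $\sum_{E\in\mathcal E}\widehat T_E(\omega)$ converges absolutely and its value does not depend on the admissible partition $\mathcal E$, so that $\widehat T$ is a well-defined linear map; (ii) that $\widehat T$ is sequentially continuous; (iii) that $\widehat T\circ\bar\iota_{pc}=T$; and (iv) that $\widehat T$ is the unique sequentially continuous linear functional with this property. The tools I would use are the mass estimate \eqref{eq:intbound} and the local bound \eqref{moii} from Proposition~\ref{prop:eext}, the uniqueness built into Lemma~\ref{lem:need} (and the functorial identities of Remark~\ref{rmk:functorial}), the weak continuity of a current of locally finite mass on compactly supported bounded Borel functions (Lang \cite{lan11}; see the discussion preceding Lemma~\ref{moi}), and the monotonicity and restriction behaviour of $\overline L_k(\,\cdot\,;\,\cdot\,)$ recorded at the beginning of Section~\ref{sec:part_cont}.

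For (i), absolute convergence follows from $|\widehat T_E(\omega)|\le\int_E\|\omega|_E\|_x\,\ud\|T\|$ together with Definition~\ref{def:pcpolylip}(2): fixing a representation $\{\bar\pi_U\}_\cU$ of $\omega$ with respect to $\mathcal E$ and a Borel partition $\{W_U\}$ of $X$ with $W_U\subset U$, one gets $\|\omega|_E\|_x\le C_U$ on $W_U$ uniformly in $E$, so $\sum_E|\widehat T_E(\omega)|\le\sum_{U\in\cU_K}C_U\|T\|(\bar U)<\infty$ because only the finitely many $U$ meeting $K=\spt\omega$ contribute and $\|T\|$ is Radon. Independence of $\mathcal E$ is proved by comparison through the common refinement $\mathcal E\vee\mathcal F$, which is again a countable Borel partition, is admissible for $\omega$ with the \emph{same} data $\{\bar\pi_U\}_\cU$ (since $\overline L_k(\,\cdot\,;A)$ is monotone under shrinking $A$ and restrictions of overlap-compatible families remain overlap-compatible, cf.\ Remark~\ref{rmk:nicecover}), and for which $E\mapsto\widehat T_E(\omega)$ is countably additive: writing $\overline{T_{\varphi_U}^{E\cap U}}(\bar\rho_{E\cap U,U}\bar\pi_U)=\sum_jT(\chi_E\varphi_U\pi_0^{j,U},\dots,\pi_k^{j,U})$ for a representation $(\pi_0^{j,U},\dots,\pi_k^{j,U})_j$ of a lift of $\bar\pi_U$, the identity $\chi_E=\sum_{F}\chi_{E\cap F}$ and Lemma~\ref{moi} (an absolutely convergent rearrangement, justified by the $\|T\|$-mass bounds) give $\widehat T_E(\omega)=\sum_{F}\widehat T_{E\cap F}(\omega)$; summing over $E$ and applying Fubini for series yields $\sum_E\widehat T_E(\omega)=\sum_F\widehat T_F(\omega)$. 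Assertion (iii) is then immediate: for $\pi\in\sD^k(X)$ the trivial partition $\{X\}$ is admissible for $\bar\iota(\pi)$, so $\widehat T(\bar\iota_{pc}(\pi))=\widehat T_X(\bar\iota(\pi))=(T\lfloor_X)(\pi)=T(\pi)$ by Proposition~\ref{prop:eext}.

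For (ii), by linearity it suffices to treat $\omega_n\to0$. Take the common data $K,\mathcal E,\cU,\{\bar\pi_U^n\}_\cU$ furnished by Definition~\ref{def:pcpolylipconv}; only the finitely many $U\in\cU_K$ contribute, and for each such $U$ I would bound $\sum_{E}\bigl|\overline{T_{\varphi_U}^{E\cap U}}(\bar\rho_{E\cap U,U}\bar\pi^n_U)\bigr|\le\sum_E\Lip_k(\bar\rho_{E\cap U,U}\bar\pi^n_U;E\cap\spt\varphi_U)\int_E\varphi_U\,\ud\|T\|\le C_U\int\varphi_U\,\ud\|T\|$ using \eqref{moii} and condition~(3), which gives a summable majorant independent of $n$; for each fixed $E$ the corresponding term tends to $0$ by the sequential continuity of $\overline{T_{\varphi_U}^{E\cap U}}$ (Lemma~\ref{lem:need}) and condition~(2). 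Dominated convergence over $E$, followed by the finite sum over $U\in\cU_K$, gives $\widehat T(\omega_n)\to0$.

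The heart of the matter, and the step I expect to be the main obstacle, is uniqueness~(iv). Let $A\colon\hParcon(X)\to\R$ be sequentially continuous with $A\circ\bar\iota_{pc}=T$, fix $\omega$ with admissible $\mathcal E$, a representation $\{\bar\pi_U\}_\cU$, and a subordinate Lipschitz partition of unity $\{\varphi_U\}$; since $\omega=\sum_{U\in\cU_K}\varphi_U\smile\omega$ is a finite sum, $A(\omega)=\sum_{U\in\cU_K}A(\varphi_U\smile\omega)$, so it is enough to prove $A(\varphi_U\smile\omega)=\overline{T_{\varphi_U}^U}(\bar\pi_U)$ for each $U$, with $T_{\varphi_U}^U$ as in Lemma~\ref{lem:need}; since $\widehat T$ itself satisfies the hypotheses on $A$ by (i)--(iii), the same computation applied to it then forces $A=\widehat T$. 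The key observation is that $\varphi_U\smile\omega$ is supported in $\spt\varphi_U\Subset U$ and, restricted to \emph{any} $E\in\mathcal E$, coincides with $s(\bar\pi_U)$, the extension by zero of $\varphi_U\smile\bar\gamma_U(\bar\pi_U)$. Consequently, if $\bar\pi_U^{(N)}$ is the $N$-th partial sum of a representation of a lift of $\bar\pi_U$, the \emph{continuous} sections $s(\bar\pi_U^{(N)})$ converge to $s(\bar\pi_U)$ in $\overline\Gamma_c^k(X)$ (because $\bar\pi_U^{(N)}\to\bar\pi_U$ in the projective norm and the cup product is bounded), hence in $\hParcon(X)$; and since $s(\bar\pi_U^{(N)})-s(\bar\pi_U)$ and $s(\bar\pi_U^{(N)})-\varphi_U\smile\omega$ have identical restrictions to every $E\in\mathcal E$ — so the same family witnesses both convergences in the sense of Definition~\ref{def:pcpolylipconv}, the conditions there depending only on the $E\cap U$-restrictions — we also get $s(\bar\pi_U^{(N)})\to\varphi_U\smile\omega$ in $\hParcon(X)$. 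Finally $s(\bar\pi_U^{(N)})=\sum_{j\le N}\bar\iota_{pc}(\varphi_U\pi_0^{j,U},\widetilde{\pi_1^{j,U}},\dots,\widetilde{\pi_k^{j,U}})$ is a finite sum of images of $\sD^k(X)$-tuples, so $A(s(\bar\pi_U^{(N)}))=\sum_{j\le N}T(\varphi_U\pi_0^{j,U},\dots)=\sum_{j\le N}T_{\varphi_U}^U(\pi_0^{j,U},\dots,\pi_k^{j,U})\to\overline{T_{\varphi_U}^U}(\bar\pi_U)$ as $N\to\infty$ by boundedness of $\overline{T_{\varphi_U}^U}$, and sequential continuity of $A$ closes the argument. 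The delicate point, where I would be most careful, is exactly that $\hParcon$-convergence probes only restrictions to the partition pieces, so that $\varphi_U\smile\omega$ — which is genuinely \emph{not} a continuous section — is indistinguishable in the limit from the continuous section $s(\bar\pi_U)$; checking that the representations and the uniform $\overline L_k$-bounds transfer between the two is what makes the limiting step legitimate.
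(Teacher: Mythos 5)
Your proposal is correct, and for well-definedness, sequential continuity and the factorization it runs along the same lines as the paper: countable additivity $\widehat T_E(\omega)=\sum_i\widehat T_{E_i}(\omega)$ obtained from the restriction identity $T\lfloor E=T\lfloor E_1+T\lfloor E_2$ (equivalently your $\chi_E=\sum_F\chi_{E\cap F}$ argument via Lemma \ref{moi}) together with the bounds \eqref{moii} and \eqref{eq:intbound}, then comparison of two admissible partitions through their common refinement; and dominated convergence over $\mathcal E$ with the majorant coming from condition (3) of Definition \ref{def:pcpolylipconv} and the sequential continuity of $\overline{T^{E\cap U}_{\varphi_U}}$ from Lemma \ref{lem:need}. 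Where you genuinely diverge is the uniqueness step: the paper disposes of it by appealing to Proposition \ref{prop:eextunique} together with \eqref{eq:intbound}, i.e.\ it pins down the competitor piecewise via the mass bound, whereas you never assume any mass estimate on the competitor $A$ and instead exploit directly that $\hParcon(X)$-convergence only tests restrictions to the partition pieces (plus a support condition): after the partition-of-unity splitting $\omega=\sum_{U\in\cU_K}\varphi_U\smile\omega$, you approximate $\varphi_U\smile\omega$ in $\hParcon(X)$ by the continuous sections $s(\bar\pi_U^{(N)})$ built from partial sums, observe that these are finite sums of elements $\bar\iota_{pc}(\varphi_U\pi_0^{j},\widetilde{\pi_1^{j}},\ldots)$, and let sequential continuity and $A\circ\bar\iota_{pc}=T$ force $A(\varphi_U\smile\omega)=\overline{T^U_{\varphi_U}}(\bar\pi_U)$; applying the same computation to $\widehat T$ closes the argument. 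This buys a self-contained proof of exactly the uniqueness statement asserted (sequential continuity plus factorization, with no a priori bound on $A$), at the cost of a non-trivial pile of verifications you rightly flag: that $\varphi_U\smile\omega$ lies in $\hParcon(X)$, that $(\varphi_U\smile\omega)|_E=s(\bar\pi_U)|_E$ for every $E\in\mathcal E$ (this uses the overlap condition only inside $E$, which is all one has), that restriction $\overline\Poly^k(V)\to\overline\Poly^k(E\cap V)$ is sequentially continuous so the witnessing data for $s(\bar\pi_U^{(N)})\to s(\bar\pi_U)$ in $\overline\Gamma^k_c(X)$ also witnesses convergence with respect to $\mathcal E$, and — one small imprecision worth fixing — that condition (1) of Definition \ref{def:pcpolylipconv} is a condition on the global support, not on restrictions, so you should note explicitly that all the differences involved are supported in the fixed compact set $\spt\varphi_U$, which makes the "same data witnesses both limits" step legitimate. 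With that patch your argument is complete, and it quietly uses (rather than suffers from) the fact that sequential limits in $\hParcon(X)$ are not unique: a sequentially continuous functional must then take equal values on the two limits, which is precisely what identifies $A(\varphi_U\smile\omega)$.
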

\begin{proof}
 If $\varphi\in\LIP_c(X)$, $\spt\varphi\subset U$ is open, and $E_1,E_2\subset X$ are disjoint Borel sets  $E=E_1\cup E_2$, the identity $T\lfloor E=T\lfloor E_1+T\lfloor E_2$ implies that
\[
\overline{T^{E\cap U}_\varphi}=\overline{T^{E_1\cap U}_\varphi}\circ\bar\rho_{E_1\cap U,E\cap U}+\overline{T^{E_2\cap U}_\varphi}\circ\bar\rho_{E_2\cap U,E\cap U}.
\]
This and the estimate (\ref{moii}) can be used to show that, if $E_1,E_2,\ldots$ is a Borel partition of $E$, we have
\[
\widehat T_E(\omega)=\sum_i^\infty \widehat T_{E_i}(\omega),\quad \omega\in\G^k_{E,c}(X).
\]
Note that by (\ref{eq:intbound}) the sum above is absolutely convergent. The well-definedness of $\widehat T:\hParcon(X)\to \R$ follows easily from this.

The factorization $\widehat T\circ\iota_{pc}=T$ follows immediately from the observation that $\widehat T|_{\overline\Gamma_c^k(X)}=\widetilde T$, where $\widetilde T$ denotes the extension given by Theorem \ref{ext1}. Uniqueness follows from Proposition \ref{prop:eextunique} and (\ref{eq:intbound}).

Next we prove that $\widehat T$ is sequentially continuous. Suppose $\omega_n\to \omega$ in $\hParcon(X)$, and let $K\subset X$, $\mathcal E$ and $\{ \bar\pi^n \}_\cU$ be as in Definition \ref{def:pcpolylipconv}. Denote $\cU_K:=\{ U\in\cU:\ U\cap K\ne\varnothing \}$  and set 
\[
C_K:=\sup_{n,E,U\in\cU_K}\bar L_k(\pi^n;E\cap U)<\infty.
\]
Let $\{\varphi_U\}$ be a Lipschitz partition of unity subordinate to $\cU$. For each $n\in\N$, we have
\begin{align*}
\widehat T(\omega_n)-\widehat T(\omega)=\widehat T(\omega_n-\omega)=\sum_{E\in\mathcal E}\sum_{U\in\cU}\overline{T^{E\cap U}_{\varphi_U}}(\bar\pi^n)=\sum_{E\in\mathcal E}\sum_{U\in\cU_K}\overline{T^{E\cap U}_{\varphi_U}}(\bar\pi^n).
\end{align*}
Since 
\[
|\overline{T^{E\cap U}_{\varphi_U}}(\bar\pi^n)|\le \Lip_k(\bar\pi^n;E\cap U)\int_E\varphi_U\ud\|T\|\le C_K\int_E\varphi_U\ud\|T\|
\]
we may apply the dominated convergence theorem to conclude that
\[
\lim_{n\to\infty}\widehat T(\omega_n-\omega)=\sum_{E\in\mathcal E}\sum_{U\in\cU_K}\lim_{n\to\infty}\overline{T^{E\cap U}_{\varphi_U}}(\bar\pi^n)=0.
\]
The claim follows.
\end{proof}

\subsection{Boundaries of normal currents} We finish the proof of Theorem \ref{ext2} by showing the validity of \eqref{eq:pcd} in Corollary \ref{pcbdry}.

\begin{proposition}\label{prop:pcdext}
The differentials $d:\G^k_c(X)\to\G^{k+1}_c(X)$ and $\bar d:\overline\G^k_c(X)\to\overline\G^{k+1}_c(X)$ restrict to sequentially continuous linear maps
\[
d:\Parcon(X)\to \Gamma^{k+1}_{\mathrm{pc},c}(X) \textrm{ and }\bar d:\hParcon(X)\to \overline\Gamma^{k+1}_{\mathrm{pc},c}(X).
\]
\end{proposition}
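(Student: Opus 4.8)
The plan is to show that the exterior derivative carries the representing data of a partition-continuous form to representing data of a partition-continuous form, and likewise along convergent sequences. Three properties of $\bar d$ drive everything: \textbf{(a)} $\bar d$ commutes with restriction to \emph{arbitrary} subsets, i.e.\ identity \eqref{restprop} together with its consequence $(\bar d\bar\omega)|_E=\bar d(\bar\omega|_E)$ on sections; \textbf{(b)} $\bar d$ does not enlarge supports, $\spt(\bar d\bar\omega)\subset\spt\bar\omega$, which is immediate from (a) since $\bar\omega$ vanishes on the open set $X\setminus\spt\bar\omega$; and \textbf{(c)} on every metric space $Z$ the map $\bar d_Z:\overline\Poly^k(Z)\to\overline\Poly^{k+1}(Z)$ is bounded with $\overline L_{k+1}(\bar d_Z\bar\pi)\le (k+2)\overline L_k(\bar\pi)$ and sequentially continuous. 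Property (c) is obtained by combining Lemma \ref{extdcont} (the estimate and sequential continuity of $d$ on $\Poly^k(Z)$, valid for any metric space $Z$, applied with $X:=Z$ and $V:=Z$) with the fact that $Q^{k+1}_Z$ is norm-nonincreasing and sequentially continuous, and then pushing the bound and the sequential continuity through the quotient $Q^k_Z$ via Proposition \ref{prop:useful}(3). Note also that for a Borel set $E$ the quantity $\overline L_k(\bar\pi_U;E\cap U)$ is just the projective norm of $\bar\rho_{E\cap U,U}(\bar\pi_U)$ on the metric space $E\cap U$, so (c) applies to it verbatim.

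Granting (a)--(c), the first step is that $\bar d$ maps $\hParcon(X)$ into $\overline\Gamma^{k+1}_{\mathrm{pc},c}(X)$. Let $\bar\omega\in\hParcon(X)$ be $\mathcal E$-continuous with representing data $\{\bar\pi_U\in\overline\Poly^k(U)\}_{U\in\cU}$ as in Definition \ref{def:pcpolylip}, and consider $\{\bar d_U\bar\pi_U\}_{U\in\cU}$ with the same cover $\cU$ and partition $\mathcal E$. Using $\bar\rho_{E\cap U,U}(\bar d_U\bar\pi_U)=\bar d_{E\cap U}(\bar\rho_{E\cap U,U}\bar\pi_U)$ from \eqref{restprop}, the overlap condition for $\{\bar\rho_{E\cap U,U}\bar\pi_U\}$ over $\bar\omega|_E$ is transported to $\{\bar\rho_{E\cap U,U}\bar d_U\bar\pi_U\}$ over $(\bar d\bar\omega)|_E=\bar d(\bar\omega|_E)$ by applying $\bar d$ on the relevant triple intersection, and the germwise compatibility follows from the description of $\bar d$ as the linear map associated to the presheaf homomorphism $\{Q^{k+1}_U\circ d^k_U\}$, i.e.\ $(\bar d\bar\omega)|_E(x)=[\bar d_{E\cap U}(\bar\rho_{E\cap U,U}\bar\pi_U)]_x$ for a local representative. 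This gives Definition \ref{def:pcpolylip}(1); Definition \ref{def:pcpolylip}(2) follows from $\sup_{E\in\mathcal E}\overline L_{k+1}(\bar d_U\bar\pi_U;E\cap U)\le (k+2)\sup_{E\in\mathcal E}\overline L_k(\bar\pi_U;E\cap U)<\infty$ by (c). Finally $\spt(\bar d\bar\omega)\subset\spt\bar\omega$ is compact by (b), so $\bar d\bar\omega\in\overline\G^{k+1}_c(X)$, and together with $\mathcal E$-continuity we conclude $\bar d\bar\omega\in\overline\Gamma^{k+1}_{\mathrm{pc},c}(X)$.

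For sequential continuity, suppose $\bar\omega_n\to\bar\omega$ in $\hParcon(X)$, and let $K$, $\mathcal E$, $\cU$, $\{\bar\pi^n_U\}$ be the data from Definition \ref{def:pcpolylipconv} associated to $\bar\omega_n-\bar\omega$. By linearity $\bar d\bar\omega_n-\bar d\bar\omega=\bar d(\bar\omega_n-\bar\omega)$, and by the previous step $\{\bar d_U\bar\pi^n_U\}$ represents $\bar d(\bar\omega_n-\bar\omega)$ with respect to $\mathcal E$; it remains to check the three items of Definition \ref{def:pcpolylipconv}. Item (1) holds since $\spt\bar d(\bar\omega_n-\bar\omega)\subset\spt(\bar\omega_n-\bar\omega)\subset K$ by (b). Item (2): $\bar\rho_{E\cap U,U}(\bar d_U\bar\pi^n_U)=\bar d_{E\cap U}(\bar\rho_{E\cap U,U}\bar\pi^n_U)\to 0$ in $\overline\Poly^{k+1}(E\cap U)$ by the sequential continuity in (c). Item (3): $\sup_{n,E}\overline L_{k+1}(\bar d_U\bar\pi^n_U;E\cap U)\le (k+2)\sup_{n,E}\overline L_k(\bar\pi^n_U;E\cap U)<\infty$ by the estimate in (c). Hence $\bar d\bar\omega_n\to\bar d\bar\omega$ in $\overline\Gamma^{k+1}_{\mathrm{pc},c}(X)$. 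The statement for $d:\Parcon(X)\to\Gamma^{k+1}_{\mathrm{pc},c}(X)$ is proved identically, using Lemma \ref{extdcont} directly in place of (c) and the polylipschitz-section analogues of \eqref{restprop}; no quotient-by-constants considerations arise there.

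The main obstacle is really confined to property (c): Lemma \ref{extdcont} is phrased for the non-homogeneous spaces and for open subsets, so one must check that its estimate and sequential continuity survive the quotient by constants (this is exactly what Proposition \ref{prop:useful}(3) delivers, once one observes that $Q^{k+1}_Z\circ d^k_Z\circ\jmath_Z$ satisfies the locality condition \eqref{eq:locality}) and that $\overline L_k(\cdot;E\cap U)$ for Borel $E$ is nothing but the projective norm on the metric space $E\cap U$. After (c) and the ``restriction commutes with $\bar d$'' identity \eqref{restprop} are in place, the remainder is a routine match against Definitions \ref{def:pcpolylip} and \ref{def:pcpolylipconv}.
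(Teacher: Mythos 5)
Your proposal is correct and follows essentially the same route as the paper's proof: transport the representing data $\{\bar\pi_U\}_\cU$ by $\bar d$, using that $\bar d$ commutes with restriction to arbitrary subsets (so overlap-compatibility in each $E$ is preserved, which the paper gets via Remark \ref{rmk:homomcomp}), the $(k+2)$-bound from Lemma \ref{extdcont} pushed through the quotient for Definition \ref{def:pcpolylip}(2), and sequential continuity of $\bar d_{E\cap U}$ for the convergence statement. Your extra bookkeeping (support non-enlargement and the uniform bound in Definition \ref{def:pcpolylipconv}(3)) is exactly what the paper leaves implicit, so there is no substantive difference.
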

\begin{proof}
	We prove the statement for $\bar d$. The other case is similar. Let $\omega\in \hParcon(X)$ and let $\mathcal E$, $\{\bar\pi_U\}_\cU$, and $C_U$ be as in Definition \ref{def:pcpolylip}. Since $\bar d(\omega|_E)=(\bar d\omega)|_E$, Remark \ref{rmk:homomcomp} implies that $\{ \bar d\bar\pi_U\}_\cU$ is overlap-compatible with $\bar d\omega$ in $E$ for every $E\in\mathcal E$. Moreover by Lemma \ref{extdcont} we have
	\[
	\bar L_{k+1}(\bar d\bar\pi_U; E\cap U)\le C_U(k+2)
	\]
	for every $E\in\mathcal E$. Thus $\bar d\omega\in\overline\Gamma_{\operatorname{pc},c}^{k+1}(X)$.
	
	To see sequential continuity, let $\omega_n\to \omega$ in $\hParcon(X)$, and let $\mathcal E$, $K$, $C_K$, and $\{ \bar\pi^n \}_\cU$ be as in Definition \ref{def:pcpolylipconv}. Since $\rho_{E\cap U,U}(\bar\pi^n)\to 0$ in $\overline\Poly^k(E\cap U)$ and $\bar d_{E\cap U}$ is sequentially continuous it follows that $\bar d_{E\cap U}(\rho_{E\cap U,U}(\bar\pi^n))\to 0$ in $\overline\Poly^{k+1}(E\cap U)$. Thus $\bar d(\omega_n-\omega)\to 0$ in $\overline\Gamma_{\operatorname{pc},c}^{k+1}(X)$ and the proof of the proposition is complete.
\end{proof}

Proposition \ref{prop:pcdext} and the uniqueness in Proposition \ref{prop:pcext} 
immediately yield the following corollary.

\begin{corollary}\label{pcbdry}
	Let $T\in N_{k,\loc}(X)$ be a locally normal $k$-current on $X$. Let $\widehat T:\Parcon(X)\to\R$ and $\widehat{\partial T}:\Gamma_{\operatorname{pc},c}^{k-1}(X)\to\R$ be extensions of $T$ and $\partial T$, respectively. Then we have \[ \widehat{\partial T}(\omega)=\widehat T(d\omega)\] for each $\omega\in \Gamma_{\operatorname{pc},c}^{k-1}(X)$.
\end{corollary}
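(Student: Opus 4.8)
The plan is to show that $\widehat T\circ\bar d$ (and, in the non-homogeneous picture, $\widehat T\circ d$) is a sequentially continuous linear extension of $\partial T$, and then to apply the uniqueness part of Proposition~\ref{prop:pcext}. Since $T\in N_{k,\loc}(X)$, both $T$ and $\partial T$ have locally finite mass, so Proposition~\ref{prop:pcext} provides the extensions $\widehat T$ and $\widehat{\partial T}$ named in the statement. By Proposition~\ref{prop:pcdext}, the exterior derivative restricts to a sequentially continuous linear map $\bar d\colon\overline\Gamma_{\operatorname{pc},c}^{k-1}(X)\to\hParcon(X)$ (respectively $d\colon\Gamma_{\operatorname{pc},c}^{k-1}(X)\to\Parcon(X)$), and composing with the sequentially continuous $\widehat T$ gives a sequentially continuous linear functional $\widehat T\circ\bar d$ on $\overline\Gamma_{\operatorname{pc},c}^{k-1}(X)$.

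The only thing left to check is the factorization $(\widehat T\circ\bar d)\circ\bar\iota_{pc}=\partial T$ on $\sD^{k-1}(X)$, and for this I would reuse the computation from the proof of Proposition~\ref{prop:bdry}. Fix $(\pi_0,\ldots,\pi_{k-1})\in\sD^{k-1}(X)$ and choose $\varphi\in\LIP_c(X)$ with $\varphi\equiv 1$ on a neighbourhood of $\spt\pi_0$. Expanding via \eqref{eq:simplebdry} at the level of germs, every summand except the zeroth carries a constant factor in a slot $>0$, hence lies in $\ker Q_U^{k}$, while the zeroth summand has the same germ as $\bar\iota(\varphi,\pi_0,\ldots,\pi_{k-1})$ at each point of $\spt\pi_0$ and both vanish off $\spt\pi_0$; thus $\bar d(\bar\iota(\pi_0,\ldots,\pi_{k-1}))=\bar\iota(\varphi,\pi_0,\ldots,\pi_{k-1})$. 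Applying $\widehat T$ and using $\widehat T\circ\bar\iota=T$ then yields $\widehat T(\bar d(\bar\iota(\pi_0,\ldots,\pi_{k-1})))=T(\varphi,\pi_0,\ldots,\pi_{k-1})=\partial T(\pi_0,\ldots,\pi_{k-1})$, which is the desired factorization. For the non-homogeneous version the extra summands of \eqref{eq:simplebdry} no longer vanish as sections, but $\widehat T$ still annihilates them: each equals $\iota$ of a $(k+1)$-tuple with a constant entry in a slot $>0$, so $\widehat T$ of it equals $T$ of that tuple, which is zero by the locality of $T$.

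Granting the factorization, the uniqueness in Proposition~\ref{prop:pcext} applied to $\partial T\in M_{k-1,\loc}(X)$ forces $\widehat T\circ\bar d=\widehat{\partial T}$ (respectively $\widehat T\circ d=\widehat{\partial T}$), which is the assertion. I do not expect a genuine obstacle here: the real content has already been isolated in Propositions~\ref{prop:pcdext} and~\ref{prop:bdry}, and the one mildly delicate point — replacing the non-compactly supported constant $1$ in the zeroth slot of \eqref{eq:simplebdry} by $\varphi$ — is harmless precisely because $\iota(\pi_0,\ldots,\pi_{k-1})$ has compact support contained in $\spt\pi_0$, exactly as in the proof of Proposition~\ref{prop:bdry}.
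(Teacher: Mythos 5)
Your argument is exactly the paper's: the paper derives Corollary~\ref{pcbdry} in one line from Proposition~\ref{prop:pcdext} together with the uniqueness in Proposition~\ref{prop:pcext}, which is precisely your strategy of exhibiting $\widehat T\circ d$ (resp.\ $\widehat T\circ\bar d$) as a sequentially continuous linear functional factoring $\partial T\in M_{k-1,\loc}(X)$ through $\iota_{pc}$. Your only added content is spelling out the factorization by repeating the computation from Proposition~\ref{prop:bdry} (and handling the extra terms of \eqref{eq:simplebdry} in the non-homogeneous picture via locality), which the paper leaves implicit; this is correct and consistent with the paper's proof.
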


\begin{remark}
Corollary \ref{pcbdry} implies that $\partial \widehat T:\Gamma_{\operatorname{pc},c}^{k-1}(X)\to \R$, $\omega\mapsto \widehat T(d\omega)$, coincides with the extension $\widehat{\partial T}:\Gamma_{\operatorname{pc},c}^{k-1}(X)\to\R$ of $\partial T\colon \sD^k(X)\to\R$ to partition-continuous polylipschitz forms. Thus the use of the symbol $\partial$ is unambiguous.
\end{remark}

\section{Final remarks}\label{sec:Alt}
We briefly discuss polylipschitz sections in connection with duality, and antisymmetrization on polylipschitz forms.

\subsection{Extending currents to polylipschitz sections}

Define the space $\Parcon(X)$ and sequential convergence in $\Parcon(X)$ as in Definitions \ref{def:pcpolylip} and \ref{def:pcpolylipconv}. It is straightforward to check that the map $\mathcal Q:\G^k(X)\to \overline\G^k(X)$ in \eqref{eq:sectquotient} restricts to a sequentially continuous linear map
\[
\mathcal Q:\Parcon(X)\to\overline\Gamma_{pc,c}^k(X).
\]
We record the following theorem for extensions of currents to polylipschitz sections. The claims follow directly from Theorems \ref{ext2} and \ref{main2} together with the fact that $\bar d\circ\mathcal Q=\mathcal Q\circ d$, cf.~Proposition \ref{prop:hcohomom}.

\begin{theorem}\label{thm:polysec}
Suppose $T\in M_{k,\loc}(X)$, let $\widehat T$ be the unique extension given by Proposition \ref{prop:pcext}, and $\widetilde T:=\widehat T\circ \mathcal Q$. Then $\widetilde T:\Parcon(X)\to\R$ linear, sequentially continuous and satisfies $\widetilde T\circ\iota=T$.

Moreover, for $T\in M_{k,\loc}(X)$ and $T'\in N_{k+1,\loc}(X)$, the identities
\[
\widetilde{\partial T'}(\omega)=\widetilde T'(d\omega)
\]
and
\[
|\widetilde T(\omega)|\le \int_X\|\omega\|_x\ud\|T\|(x)
\]
hold for all $\omega\in \Parcon(X)$.
\end{theorem}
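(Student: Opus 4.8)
The plan is to obtain everything by transporting the already-established results for homogeneous polylipschitz forms through the sequentially continuous quotient map $\mathcal Q\colon \Parcon(X)\to\hParcon(X)$. First I would verify the (claimed) restriction statement: the presheaf homomorphism $\{Q_U\}$ of \eqref{eq:relation} restricts to a sequentially continuous map $\mathcal Q\colon\Parcon(X)\to\hParcon(X)$. This is a routine check against Definitions \ref{def:pcpolylip} and \ref{def:pcpolylipconv}: if $\{\pi_U\}_\cU$ represents $\omega$ with respect to a partition $\mathcal E$, then $\{Q_U(\pi_U)\}_\cU$ represents $\mathcal Q\omega$ with respect to the same $\mathcal E$, since $Q_U$ commutes with restrictions by \eqref{eq:relation}, is norm-nonincreasing (so condition (2) of Definition \ref{def:pcpolylip} and condition (3) of Definition \ref{def:pcpolylipconv} persist), and is sequentially continuous by Proposition \ref{prop:useful}(3) (so condition (2) of Definition \ref{def:pcpolylipconv} persists); compact supports and overlap-compatibility pass through as well.

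Granting this, set $\widetilde T:=\widehat T\circ\mathcal Q$ with $\widehat T$ the extension from Proposition \ref{prop:pcext}. Then $\widetilde T$ is linear as a composite of linear maps, and sequentially continuous as a composite of sequentially continuous maps (Proposition \ref{prop:pcext} for $\widehat T$, and the previous paragraph for $\mathcal Q$). For the factorization, recall $\bar\iota=\bar\gamma\circ\bar\jmath=\bar\gamma\circ\jmath\circ Q=\mathcal Q\circ\gamma\circ\jmath=\mathcal Q\circ\iota$ on $\sD^k(X)$ — indeed on the level of presheaves $\mathcal Q$ is induced by $\{Q_U\}$ and $Q\circ\jmath=\bar\jmath$ by \eqref{eq:barjmath}, so $\mathcal Q\circ\iota_{pc}=\bar\iota_{pc}$ — whence $\widetilde T\circ\iota=\widehat T\circ\mathcal Q\circ\iota=\widehat T\circ\bar\iota_{pc}=T$ by Proposition \ref{prop:pcext}.

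The mass bound follows by combining the pointwise norm comparison with the bound for $\widehat T$. The pointwise norm on $\overline\G^k(X)$ was defined in Definition \ref{pwnorm}; the analogous definition on $\G^k(X)$ (alluded to in the statement of Theorem \ref{main2} via ``the pointwise norm of a polylipschitz form'') satisfies $\|\omega\|_x=\|\mathcal Q\omega\|_x$ for $\omega\in\G^k(X)$, because $\Lip_k(\pi;B_r(x))$ depends only on the images $\overline{\pi_l^j}|_{B_r(x)}$ of the representing functions (the formula involves only $\Lip(\pi_l^j|_E)$ for $l\geq 1$, which is invariant under adding constants, together with $|\pi_0^j(x)|$), i.e. only on $Q(\pi)$. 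Hence, for $\omega\in\Parcon(X)$, applying Theorem \ref{ext2}/Theorem \ref{main2} (in the form of Proposition \ref{prop:eext}, \eqref{eq:intbound}) to $\mathcal Q\omega\in\hParcon(X)$ gives
\[
|\widetilde T(\omega)|=|\widehat T(\mathcal Q\omega)|\le\int_X\|\mathcal Q\omega\|_x\,\ud\|T\|(x)=\int_X\|\omega\|_x\,\ud\|T\|(x).
\]
Finally, for the boundary identity take $T'\in N_{k+1,\loc}(X)$. By Corollary \ref{pcbdry}, $\widehat{\partial T'}(\bar d\eta)=\widehat{T'}(\bar d\eta)$... more precisely $\widehat{\partial T'}(\eta)=\widehat{T'}(\bar d\eta)$ for $\eta\in\hParcon^{k}(X)$; apply this with $\eta=\mathcal Q\omega$ and use the intertwining $\bar d\circ\mathcal Q=\mathcal Q\circ d$ from Proposition \ref{prop:hcohomom} (valid since $Q_U\circ d_U\circ\jmath_U$ satisfies \eqref{eq:locality}, as used in Section \ref{sec:aux}) to get
\[
\widetilde{\partial T'}(\omega)=\widehat{\partial T'}(\mathcal Q\omega)=\widehat{T'}(\bar d\,\mathcal Q\omega)=\widehat{T'}(\mathcal Q\,d\omega)=\widetilde{T'}(d\omega).
\]
The main obstacle is none of the displayed algebra but rather the bookkeeping in the first paragraph: one must check carefully that $\mathcal Q$ genuinely maps $\Parcon(X)$ into $\hParcon(X)$ and is sequentially continuous for the partition-continuous convergence — in particular that an admissible partition and cover for $\omega$ remain admissible for $\mathcal Q\omega$ with uniform norm bounds — since the ambient abstract results on cohomomorphisms (Propositions \ref{prop:seqcont}, \ref{prop:hcohomom}) are stated for $\Gamma_c$, not for the partition-continuous spaces, so this refinement has to be done by hand against Definitions \ref{def:pcpolylip} and \ref{def:pcpolylipconv}.
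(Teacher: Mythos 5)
Your proposal is correct and takes essentially the same route as the paper: the paper likewise first checks that $\mathcal Q$ restricts to a sequentially continuous map $\Parcon(X)\to\hParcon(X)$ and then deduces all claims from Theorems \ref{ext2} and \ref{main2} (via Proposition \ref{prop:eext}) together with the intertwining $\bar d\circ\mathcal Q=\mathcal Q\circ d$ from Proposition \ref{prop:hcohomom}. You merely spell out the bookkeeping (admissible partitions, pointwise-norm invariance under $Q$) that the paper dismisses as straightforward.
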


\subsection{Alternating polylipschitz forms and metric currents}

In \cite{amb00}, Ambrosio and Kirchheim point out that the other axioms of metric currents imply that a metric $k$-current $T$ on space $X$ is alternating in the sense that 
\[
T(\pi_0, \pi_{\sigma(1)},\ldots, \pi_{\sigma(k)}) = \sign(\sigma) T(\pi_0,\pi_1,\ldots, \pi_k)
\]
for all $\pi=(\pi_0,\ldots, \pi_k)\in \sD^k(X)$ and permutations $\sigma$ of $\{1,\ldots, k\}$. 

Taking into account the particular role of the function $\pi_0$ in the $(k+1)$-tuple $(\pi_0,\ldots, \pi_k)$ in the definition of a $k$-current, we use this property of metric currents to define an antisymmetrization operator $\Alt =\Alt_X\colon \Poly^k(X) \to \Poly^k(X)$ by
\[
\Alt(\pi)(x_0,\ldots, x_k) = \frac{1}{k!}\sum_{\sigma} \sign(\sigma) \pi(x_0, x_{\sigma(1)},\ldots, x_{\sigma(k)})
\]
for $\pi \in \Poly^k(X)$ and $(x_0,\ldots, x_k)\in X^{k+1}$. This map descends to a linear map
\[
\overline\Alt:\overline\Poly^k(X)\to \overline\Poly^k(X)\quad \textrm{ satisfying } Q\circ\Alt=\overline\Alt\circ Q.
\]
We call the images $\Alt(\Poly^k(X))$ and $\overline\Alt(\overline\Poly^k(X))$ \emph{alternating polylipschitz functions} and \emph{alternating homogeneous polylipschitz functions}, respectively.

Continuous sections of the \'etal\'e space associated to the corresponding presheaves gives rise to \emph{alternating} polylipschitz sections and forms, $\Gamma^k_{\Alt}(X)$ and $\overline\Gamma_{\Alt}^k(X)$, respectively. Since the exterior derivatives $d$ and $\bar d$ preserve the property of being alternating on (homogeneous) polylipschitz functions, they induce exterior derivatives
\[
d \colon \Gamma^k_{\Alt}(X)\to \Gamma^{k+1}_{\Alt}(X),\quad \bar d \colon \overline\Gamma^k_{\Alt}(X)\to \overline\Gamma^{k+1}_{\Alt}(X).
\]

Since classical differential $k$-forms on a manifold may be viewed either as sections of the $k$th exterior bundle or as sections of the bundle of alternating $k$-linear functions, we observe that alternating polylipschitz forms on a metric space are analogous to the latter.

It is now straightforward to check, using the observation of Ambrosio and Kirchheim, that for each metric $k$-current $T$, we have
\[
\widetilde T=\widetilde T\circ\Alt\quad\textrm{and}\quad \widehat T=\widehat T\circ\overline\Alt,
\]
where $\Alt$ and $\overline\Alt$ are the linear maps associated to the presheaf homomorphisms $\{\Alt_U \}$ and $\{ \overline\Alt_U \}$.

\appendix
\section{Cohomorphisms and their associated linear maps}\label{sec:cohomom}

In this appendix, we define cohomomorphisms between presheaves and describe a general construction yielding a linear map associated to a given cohomomorphism. 

Let $f:X\to Y$ be a continuous map between paracompact Hausdorff spaces and let $A=\{A(U);\rho^A_{U,V}\}_U$  and $B=\{B(U);\rho^B_{U,V}\}_U$ be presheaves on $X$ and $Y$, respectively. A collection
\[
\{\varphi_U:B(U)\to A(f\inv U)\}_U
\]
of linear maps for each open $U\subset Y$, satisfying
\begin{equation}\label{eq:presheafhomom}
\varphi_U\circ\rho^B_{U,V}=\rho^A_{f\inv U,f\inv V}\circ\varphi_V\quad\textrm{ whenever }U\subset V,
\end{equation}
is called an \emph{$f$-cohomomorphism of presheaves}; cf. \cite[Chapter I.4]{bredon97}. For $f=\id:X\to X$, condition (\ref{eq:presheafhomom}) becomes (\ref{eq:idhomom}) and thus $\id$-cohomomorphisms are simply presheaf homomorphisms.

An $f$-cohomomorphism $\varphi:B\to A$ between presheaves induces a natural linear map
\[
\varphi^*:\G(\mathcal B(Y))\to \G(\mathcal A(X)),
\]
\emph{the linear map (on sections) associated to $\varphi$}. Given a global section $\omega:Y\to \mathcal B(Y)$, the section $\varphi^*(\omega):X\to \mathcal A(X)$ is defined as follows: for $x\in X$,
\begin{equation}\label{eq:assoc}
\varphi^*(\omega)(x):=[\varphi_U(g_U)]_x,
\end{equation}
where $U$ is a neighborhood of $f(x)$ and $g_U\in B(U)$ satisfies $\omega(f(x))=[g_U]_{f(x)}$.

To see that $\varphi^*(\omega)(x)$ is well-defined, suppose that $\omega(f(x))=[g_U]_{f(x)}=[g'_V]_{f(x)}$, i.e., that there is a neighborhood $D\subset U\cap V$ of $f(x)$ for which
\[
\rho^B_{D,U}(g_U)=\rho^B_{D,V}(g'_V).
\]
By (\ref{eq:presheafhomom}) we have that
\begin{align*}
\rho^A_{f\inv D,f\inv U}(\varphi_U(g_U))=\varphi_D(\rho^B_{D,U}(g_U))=\varphi_D(\rho^B_{D,V}(g'_V))=\rho^A_{f\inv D,f\inv V}(\varphi_V(g'_V));
\end{align*}
in particular $[\varphi_U(g_U)]_x=[\varphi_V(g'_V)]_x$.
\begin{remark}\label{rmk:homomcomp}
	Let $\omega\in \G(\mathcal B(Y))$ be compatible with $\{ g_U \}_\cU$. Suppose $U_V$ satisfies $V=f\inv U_V$ for each $V\in f\inv\cU$. Then, by (\ref{eq:presheafhomom}) and the fact that $\{g_U\}_\mathcal U$ is compatible $\omega$, we have that the collection $\{ \varphi_V(g_{U_V}) \}_{f\inv\cU}$ is compatible with $\varphi^*\omega$.
	
	If $\omega\in \Gamma(\mathcal B(Y))$ and $\{g_U \}_\cU$ represents $\omega$, then $\{ \varphi_V(g_{U_V}) \}_{f\inv\cU}$ represents $\omega$, and if $\{g_U \}_\cU$ satisfies the overlap condition (\ref{overlap}) then $\{ \varphi_V(g_{U_V}) \}_{f\inv\cU}$ also satisfies the overlap condition.
\end{remark}

We collect some fundamental properties of linear maps associated to cohomorphisms in the next proposition. 
\begin{proposition}\label{prop:assocproperties}
	Let $f:X\to Y$ and $g:Y\to Z$ be continuous maps between paracompact Hausdorff spaces and let $A=\{A(U)\}_U$, $B=\{B(U)\}_U$ and $C=\{C(U)\}_U$ be presheaves on $X,Y$ and $Z$ respectively. Suppose $$\varphi=\{\varphi_U:B(U)\to A(f\inv U)\}_U,\quad \varphi'=\{\varphi'_U:B(U)\to A(f\inv U) \}_U$$ are $f$-cohomomorphisms, and $$\psi=\{\psi_U:C(U)\to B(g\inv U)\}_U$$ is an $g$-cohomomorphism.
	
	\begin{itemize}
		\item[(1)] For $\omega\in \G(\mathcal B(Y))$ we have \[ \spt(\varphi^*\omega)\subset f\inv(\spt(\omega)). \]
		\item[(2)] The associated linear map $\varphi^*:\G(\mathcal B(Y))\to \G(\mathcal A(X))$ satisfies
		\begin{equation*}
		\varphi^*(\Gamma(\mathcal B(Y)))\subset \Gamma(\mathcal A(X)).
		\end{equation*}
		\item[(3)] The collection $\{ \varphi_U+\varphi'_U:B(U)\to A(f\inv U) \}_U$ is an $f$-cohomomorphism and
		\[
		(\varphi+\varphi')^*=\varphi^*+\varphi'^*:\G(\mathcal B(Y))\to \G(\mathcal A(X)).
		\]
		\item[(4)] The collection $\{\varphi_{g\inv U}\circ\psi_U:C(U)\to A((g\circ f)\inv U)\}_U$ is an $(g\circ f)$-cohomomorphism and
		\[
		(\varphi\circ\psi)^*=\varphi^*\circ\psi^*:\G(\mathcal C(Z))\to \G(\mathcal A(X)).
		\]
	\end{itemize}
\end{proposition}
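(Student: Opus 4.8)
The plan is to verify the four assertions of Proposition~\ref{prop:assocproperties} directly from the pointwise formula \eqref{eq:assoc} defining $\varphi^*$, using the compatibility relation \eqref{eq:presheafhomom} to check that each construction respects germs. None of the four parts is genuinely hard; the work is organizational, and the main care needed is in (2), where one must exhibit an explicit continuous representative of $\varphi^*\omega$ near each point.

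First I would prove (1). If $x\notin f^{-1}(\spt\omega)$, then $f(x)$ has a neighborhood $U$ with $\rho^B_{U,Y}(\omega)$-germ zero at $f(x)$, i.e.\ $\omega(f(x))=[0]_{f(x)}$ via some $g_U=0\in B(U)$; plugging this into \eqref{eq:assoc} gives $\varphi^*\omega(x)=[\varphi_U(0)]_x=0$ by linearity of $\varphi_U$. Hence $\{x:\varphi^*\omega(x)\ne 0\}\subset f^{-1}(\spt\omega)$, and since the latter set is closed ($f$ continuous, $\spt\omega$ closed) we may take closures to conclude. For (2), let $\omega\in\Gamma(\mathcal B(Y))$ be represented by $\{g_U\}_{U\in\cU}$ satisfying \eqref{eq:represent}. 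For $x\in X$ choose $U\in\cU$ with $f(x)\in U$ and $V:=f^{-1}U$; I claim $\varphi^*\omega$ agrees on $V$ with the continuous local section $z\mapsto [\varphi_U(g_U)]_z$. Indeed, for $z\in V$ we have $f(z)\in U$, so $\omega(f(z))=[g_U]_{f(z)}$ and \eqref{eq:assoc} (applied with this same $U$) gives $\varphi^*\omega(z)=[\varphi_U(g_U)]_z$; and $z\mapsto[\varphi_U(g_U)]_z$ is continuous since it is the section over $V$ with image in the basic open set $O_{V,\varphi_U(g_U)}$. Thus $\varphi^*\omega$ is locally, hence globally, continuous. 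This is the step I expect to require the most attention, simply because one must be careful that the representative chosen in \eqref{eq:assoc} can be taken locally constant in the data $(U,g_U)$; the well-definedness check already recorded before the proposition makes this routine once phrased correctly.

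For (3), the collection $\{\varphi_U+\varphi'_U\}$ satisfies \eqref{eq:presheafhomom} because both $\varphi$ and $\varphi'$ do and composition with the restriction maps is linear; and for $x\in X$, choosing a common neighborhood $U$ of $f(x)$ and $g_U\in B(U)$ with $\omega(f(x))=[g_U]_{f(x)}$, formula \eqref{eq:assoc} gives $(\varphi+\varphi')^*\omega(x)=[(\varphi_U+\varphi'_U)(g_U)]_x=[\varphi_U(g_U)]_x+[\varphi'_U(g_U)]_x=\varphi^*\omega(x)+\varphi'^*\omega(x)$, using that addition in the stalk $\mathcal A_x(X)$ is defined representative-wise. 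For (4), first note $\{\varphi_{g^{-1}U}\circ\psi_U:C(U)\to A((g\circ f)^{-1}U)\}$ satisfies \eqref{eq:presheafhomom} for the map $g\circ f$: for $U\subset V$ open in $Z$,
\[
(\varphi_{g^{-1}U}\circ\psi_U)\circ\rho^C_{U,V}=\varphi_{g^{-1}U}\circ\rho^B_{g^{-1}U,g^{-1}V}\circ\psi_V=\rho^A_{f^{-1}g^{-1}U,f^{-1}g^{-1}V}\circ\varphi_{g^{-1}V}\circ\psi_V,
\]
where the first equality is the $g$-cohomomorphism property of $\psi$ and the second is the $f$-cohomomorphism property of $\varphi$. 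For the identity on sections, fix $x\in X$ and $\omega\in\G(\mathcal C(Z))$. Pick a neighborhood $U$ of $g(f(x))$ in $Z$ and $h_U\in C(U)$ with $\omega(g(f(x)))=[h_U]_{g(f(x))}$. By definition $\psi^*\omega(f(x))=[\psi_U(h_U)]_{f(x)}$, and $g^{-1}U$ is a neighborhood of $f(x)$, so applying \eqref{eq:assoc} to $\varphi^*$ with this datum gives $\varphi^*(\psi^*\omega)(x)=[\varphi_{g^{-1}U}(\psi_U(h_U))]_x$, which is exactly $(\varphi\circ\psi)^*\omega(x)$ computed from \eqref{eq:assoc}. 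Since $x$ was arbitrary, $(\varphi\circ\psi)^*=\varphi^*\circ\psi^*$, completing the proof.
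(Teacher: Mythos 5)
Your proof is correct and follows essentially the same route as the paper: all four parts are verified directly from the defining formula \eqref{eq:assoc} together with the cohomomorphism identity \eqref{eq:presheafhomom}, with (2) handled by exhibiting a local representative $\varphi_U(g_U)$ whose germ map is a continuous section — which is exactly the content of Remark \ref{rmk:homomcomp} that the paper cites. The only quibble is the phrasing ``$\rho^B_{U,Y}(\omega)$-germ zero'' in (1), where what you actually use (and all you need) is that $\omega(f(x))$ is the zero germ, represented by $0\in B(U)$.
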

\begin{remark}\label{rmk:bilinear}
	Given presheaves 
	\[
	A=\{ A(U) \}_U\textrm{ on $X$ and }B_1=\{B_1(U) \},\ B_2=\{ B_2(U) \}\textrm{ on }Y,
	\]
	and bilinear maps $\{ \varphi_U;B_1(U)\times B_2(U)\to A(f\inv U) \}$ an analogous construction gives an associated bilinear map 
	\[
	\varphi^*:\G(\mathcal B_1(Y))\times\G(\mathcal B_2(Y))\to \G(\mathcal A(X)).
	\]
	The induced bi-linear map satisfies (2) and (3) and also 
\begin{itemize}
\item[(1')] For each $(\omega,\sigma) \in \G(\mathcal B_1(Y))\times \G(\mathcal B_2(Y))$, we have $$\spt(\varphi^*(\omega,\sigma))\subset f\inv(\spt\omega\cap \spt\sigma).$$
\end{itemize}
	We will need this only for the case $\id:X\to X$ in the construction of cup products. The details are similar as above and we omit them.
\end{remark}
\begin{proof}[Proof of Proposition \ref{prop:assocproperties}]
	The proofs are straightforward and we merely sketch them. 
	
If $\varphi^*\omega(x)\ne 0$ then, since $\varphi_U$ is linear, (\ref{eq:assoc}) implies that $\omega(f(x))=[g_U]_{f(x)}\ne 0$, proving (1). Claim (2) follows directly from Remark \ref{rmk:homomcomp}.
		
To prove (3) we observe that from (\ref{eq:assoc}) it is easy to see that, if $\varphi':B\to A$ is another $f$-cohomorphism between presheaves, then $\varphi+\varphi'$ is an $f$-cohomomorphism and we have
		\[
		(\varphi+\varphi')^*=\varphi^*+\varphi'^{*}.
		\]
		
To prove (4), note that condition (\ref{eq:presheafhomom}) follows for $\phi\circ\psi$ from the fact that it holds for $\varphi$ and $\psi$. Using (\ref{eq:assoc}) (and the same notation) we see that
		\[
		(\varphi\circ\psi)^*\omega(x)=[\varphi_{g\inv U}(\psi_U(g_U))]_x=\varphi^*(\psi^*\omega)(x).
		\]
\end{proof}

Proposition \ref{prop:assocproperties} has the following immediate corollary.

\begin{corollary}
	If $f:X\to Y$ is a proper continuous map and $\varphi:B\to A$ an $f$-cohomomorphism between presheaves $B$ on $Y$ and $A$ on $X$, then
	\[
	\varphi^*(\G_c(\mathcal B(Y)))\subset \G_c(\mathcal A(X))\textrm{ and }\varphi^*(\Gamma_c(\mathcal B(Y)))\subset \Gamma_c(\mathcal A(X)).
	\]
	In particular presheaf homomorphisms always have this property.
\end{corollary}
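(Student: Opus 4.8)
The plan is to read the corollary off Proposition~\ref{prop:assocproperties}(1)--(2) together with the one topological input that is genuinely needed, namely that a proper continuous map pulls compact sets back to compact sets: if $f\colon X\to Y$ is proper and $K\subset Y$ is compact, then $f\inv(K)$ is compact. I would first isolate the statement $\varphi^*(\G_c(\mathcal B(Y)))\subset\G_c(\mathcal A(X))$, since the continuous case then follows with no further work.

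For that first inclusion I would argue as follows. Let $\omega\in\G_c(\mathcal B(Y))$, so that $K:=\spt\omega$ is compact. Proposition~\ref{prop:assocproperties}(1) gives the inclusion $\spt(\varphi^*\omega)\subset f\inv(K)$. The set $f\inv(K)$ is compact because $f$ is proper, and $\spt(\varphi^*\omega)$ is closed in $X$ by the very definition of support; hence $\spt(\varphi^*\omega)$ is a closed subset of a compact set and is therefore compact. This shows $\varphi^*\omega\in\G_c(\mathcal A(X))$.

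For the second inclusion, take $\omega\in\Gamma_c(\mathcal B(Y))=\Gamma(\mathcal B(Y))\cap\G_c(\mathcal B(Y))$. By Proposition~\ref{prop:assocproperties}(2) we have $\varphi^*\omega\in\Gamma(\mathcal A(X))$, and by the inclusion just established $\varphi^*\omega\in\G_c(\mathcal A(X))$; intersecting, $\varphi^*\omega\in\Gamma_c(\mathcal A(X))$. The concluding remark about presheaf homomorphisms is then immediate: a presheaf homomorphism is exactly an $\id$-cohomomorphism, and $\id\colon X\to X$ is proper, so the two displayed inclusions apply verbatim.

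I do not anticipate any real obstacle here; the only points requiring a moment's care are to invoke the correct direction of properness (preimages of compacta are compacta) and to use that $\spt(\cdot)$ is closed by construction, so that ``closed inside compact implies compact'' applies. Everything else is formal bookkeeping already carried out in Proposition~\ref{prop:assocproperties}.
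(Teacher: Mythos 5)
Your argument is correct and is exactly the reasoning the paper intends when it calls this an immediate consequence of Proposition \ref{prop:assocproperties}: part (1) plus properness gives that $\spt(\varphi^*\omega)$ is a closed subset of the compact set $f\inv(\spt\omega)$, and part (2) handles continuity, with the homomorphism case following since $\id$ is proper. No gaps; this matches the paper's (implicit) proof.
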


\bibliographystyle{plain}
\bibliography{abib}

\end{document}